\theoremstyle{plain}
\newtheorem*{thmu}{Theorem}
\newtheorem*{mainthm}{Summary of Main Results}
\newtheorem*{IFT}{Implicit Function Theorem}
\newtheorem{thm}{Theorem}
\newtheorem{prop}{Proposition}[section]
\newtheorem{cor}{Corollary}
\newtheorem{lemma}[prop]{Lemma}
\newtheorem*{conju}{Conjecture}
\newtheorem*{question}{Question}
\theoremstyle{definition}
\newtheorem{example}[prop]{Example}
\newtheorem{defn}[prop]{Definition}
\newtheorem*{notation}{Notation}
\theoremstyle{remark}
\newtheorem{remark}[prop]{Remark}
\newcommand{\BV}{\operatorname{BV}}
\newcommand{\End}{\operatorname{End}}
\newcommand{\ECH}{\operatorname{ECH}}
\newcommand{\Hom}{\operatorname{Hom}}
\newcommand{\ind}{\operatorname{ind}}
\newcommand{\muCZ}{\mu_{\operatorname{CZ}}}
\newcommand{\PT}{\operatorname{PT}}
\newcommand{\wind}{\operatorname{wind}}
\newcommand{\Spp}{\operatorname{Sp}}
\newcommand{\CC}{{\mathbb C}}
\newcommand{\DD}{{\mathbb D}}
\newcommand{\NN}{{\mathbb N}}
\newcommand{\QQ}{{\mathbb Q}}
\newcommand{\RR}{{\mathbb R}}
\newcommand{\ZZ}{{\mathbb Z}}
\newcommand{\dD}{{\mathcal D}}
\newcommand{\fF}{{\mathcal F}}
\newcommand{\hH}{{\mathcal H}}
\newcommand{\iI}{{\mathcal I}}
\newcommand{\jJ}{{\mathcal J}}
\newcommand{\mM}{{\mathcal M}}
\newcommand{\nN}{{\mathcal N}}
\newcommand{\sS}{{\mathcal S}}
\newcommand{\tT}{{\mathcal T}}
\newcommand{\uU}{{\mathcal U}}
\newcommand{\wW}{{\mathcal W}}
\newcommand{\1}{\mathds{1}}
\newcommand{\p}{\partial}
\numberwithin{equation}{section}
\title[Holomorphic Curves in Blown Up Open Books]{Holomorphic Curves
in Blown Up Open Books}
\author{Chris Wendl}
\address{Institut f\"ur Mathematik \\
Humboldt-Universit\"at zu Berlin \\
10099 Berlin \\
Germany}
\email{wendl@math.hu-berlin.de}
\urladdr{http://www.mathematik.hu-berlin.de/~wendl/}
\thanks{Research partially supported by an Alexander von Humboldt
Foundation Fellowship.}
\subjclass[2000]{Primary 32Q65; Secondary 57R17}
\begin{document}

\begin{abstract}
We use contact fiber sums of open book decompositions to define an
infinite hierarchy of filling obstructions for contact $3$-manifolds,
called \emph{planar $k$-torsion} for integers $k \ge 0$, all of which
cause the contact invariant in Embedded Contact Homology to vanish.
Planar $0$-torsion is equivalent to overtwistedness, while
every contact manifold with Giroux torsion also has planar $1$-torsion,
and we give examples of contact manifolds that have planar $k$-torsion for
any $k \ge 2$ but no Giroux torsion, leading to many new examples of
nonfillable contact manifolds.  We show also that the
complement of the binding of a supporting open book never has
planar torsion.  The technical basis of these results
is an existence and uniqueness theorem for $J$-holomorphic
curves with positive ends approaching the (possibly blown up) binding of
an ensemble of open book decompositions.
\end{abstract}

\maketitle

\tableofcontents

\section{Introduction}
\label{sec:intro}

\subsection{Summary of main results}
\label{subsec:summary}

Suppose $(W,\omega)$ is a compact symplectic manifold with boundary
$\p W = M$, and $\xi$ is a positive, cooriented contact structure on~$M$.
We say that $(W,\omega)$ is a \emph{weak symplectic filling} of
$(M,\xi)$ if $\omega|_\xi > 0$, and it is a \emph{strong symplectic filling}
if some neighborhood of~$M \subset W$ admits a $1$-form $\lambda$ with
$d\lambda = \omega$ and $\ker\lambda|_M = \xi$.

One of the foundational results of modern contact topology was the
theorem of Gromov \cite{Gromov} and Eliashberg \cite{Eliashberg:diskFilling},
that a closed contact $3$-manifold admits no symplectic filling
if it is overtwisted.  In light of more recent developments, 
nonfillability in the overtwisted case can be viewed as
a corollary of the vanishing of certain invariants, e.g.~contact homology:

\begin{thmu}[\cite{Yau:overtwisted}]
If $(M,\xi)$ is an overtwisted contact $3$-manifold, then its
contact homology is trivial.
\end{thmu}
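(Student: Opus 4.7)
The plan is to produce a closed Reeb orbit $\gamma$ bounding, up to $\RR$-translation, exactly one $J$-holomorphic plane in the symplectization $(\RR \times M, d(e^t\alpha))$, and to verify that this plane contributes with nonzero coefficient to the contact homology differential of $\gamma$. Proving $\p\gamma = 1$ in the chain complex will force the unit to vanish in the contact homology algebra, trivializing it.

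First I would exploit overtwistedness to put the contact form into a convenient normal form. Eliashberg's classification allows one to isotope the given overtwisted disk to a standard model; then I would modify $\alpha$ in a tubular neighborhood so that the Legendrian boundary of the disk becomes a closed Reeb orbit $\gamma$, with a single elliptic singularity in the interior from which the other Reeb trajectories emerge. The model can be arranged so that $\gamma$ is the only closed orbit of action below some threshold $T$, and so that its Conley--Zehnder index (computed in the natural disk trivialization) equals $2$, which is the index count compatible with rigid planes in the symplectization.

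Next I would run a Bishop-style filling argument: starting from the elliptic singularity one generates via the implicit function theorem a $1$-parameter family of small $J$-holomorphic disks with boundary on the overtwisted disk. Applying Gromov--Hofer compactness to this family, the absence of bubbling (by the action bound) and the obstruction to boundary collapse (encoded in the behavior of the characteristic foliation) force the family to converge to a $J$-holomorphic plane $u$ in $\RR \times M$ asymptotic to $\gamma$ at $+\infty$. The main obstacle, and the technical heart of the argument, will be to establish that there is \emph{exactly} one such plane up to $\RR$-translation; I would address this using intersection-theoretic methods for punctured curves (positivity of intersections together with Siefring's asymptotic analysis and the adjunction formula), showing that any second plane asymptotic to $\gamma$ would have to coincide with $u$.

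Finally I would compute the contribution of $u$ to the contact homology differential. Since no closed Reeb orbits other than $\gamma$ exist below action $T$ and the index count forces any rigid contribution to $\p\gamma$ to be a plane with no negative ends, the uniqueness statement gives $\p\gamma = \pm 1$ in the relevant action-truncated complex. Hence $[1] = 0$ in $\HC_*(M,\xi)$, the algebra collapses, and contact homology is trivial.
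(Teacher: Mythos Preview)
The paper does not prove this theorem; it is cited from Yau, and the paper only summarizes Eliashberg's sketch from the appendix of that reference. That sketch is rather different from your plan: one works inside a Lutz tube where the contact form can be written explicitly as $f(\rho)\,d\theta + g(\rho)\,d\phi$, the Reeb vector field is tangent to the concentric tori, and at one radius the tori form a Morse--Bott family of closed orbits with arbitrarily small period. The holomorphic planes are then constructed \emph{explicitly} as lifts of the disk pages $\{\theta = \text{const}\}$ (this is the planar $0$-torsion picture developed later in the paper), and uniqueness follows from the combination of the small-action constraint and an intersection argument against this explicit foliation. After a nondegenerate perturbation of the Morse--Bott torus, one isolates a single rigid plane.

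Your proposal contains a genuine confusion that would prevent it from going through as written. You say you want to modify $\alpha$ so that the Legendrian boundary of the overtwisted disk becomes a closed Reeb orbit. But a Reeb orbit is everywhere positively transverse to $\xi$ (since $\lambda(X_\lambda)\equiv 1$), while a Legendrian is tangent to $\xi$; these are incompatible conditions on the same curve, and no perturbation of $\alpha$ within the same contact structure will convert one into the other. What actually happens in Hofer's Bishop-family argument is that the disks have \emph{boundary on the overtwisted disk} (a totally real condition), and compactness failure produces a finite energy plane asymptotic to \emph{some} contractible Reeb orbit --- but you have no a priori control over which orbit that is, its action, or its Conley--Zehnder index. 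This is exactly why Hofer's argument proves the Weinstein conjecture but not the vanishing of contact homology: the latter needs a specific orbit $\gamma$ with a \emph{unique} rigid plane and no other rigid curves contributing to $\partial q_\gamma$. Your action-threshold and uniqueness claims are precisely the steps that require the explicit Lutz-tube model rather than an abstract Bishop/compactness argument.
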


Recall that contact homology (cf.~\cite{SFT}) is a variation on the 
Floer homology of
Hamiltonian systems, in which the chain complex is generated by closed
orbits of a Reeb vector field in $(M,\xi)$ and the differential counts
punctured holomorphic spheres in the symplectization $\RR\times M$ with
one positive and finitely many negative ends.  The resulting homology
has the structure of a graded commutative algebra with unit, which
is then trivial if and only if the unit is exact.
In the appendix to \cite{Yau:overtwisted}, Eliashberg sketches a proof
of this vanishing result which uses the existence of a solid torus
foliated by holomorphic planes with Morse-Bott asymptotic orbits
of arbitrarily small period (Figure~\ref{fig:LutzTube}).
After a nondegenerate perturbation of the
Morse-Bott family, one finds a distinguished rigid plane that
is the only holomorphic curve with its particular asymptotic orbit~$\gamma$
at~$+\infty$, giving a relation of the form $\p q_\gamma = 1$ in the
contact homology chain complex.

In recent years, invariants based on holomorphic curves in symplectizations 
have not been applied much to symplectic filling questions,
as there were fairly few existence and
uniqueness results that could yield useful calculations, and in any
case the analytical foundations of contact homology (and more generally
Symplectic Field Theory) are still work in progress.  In contrast,
invariants based on Seiberg-Witten theory \cite{KronheimerMrowka:contact}
and Heegaard Floer homology \cite{OzsvathSzabo:contact} have enjoyed 
great success in detecting new examples of nonfillable contact manifolds.
A test case is provided by the question of Giroux torsion, which
was suspected since the early 1990's to furnish an obstruction to strong
filling, but this was not proved until 2006, by David Gay 
\cite{Gay:GirouxTorsion} using Seiberg-Witten theory.  A second proof was
produced a short time later, as a consequence of a computation of the
Ozsv\'ath-Szab\'o contact invariant.  In particular:

\begin{thmu}[\cite{GhigginiHondaVanhorn} and \cite{GhigginiHonda:twisted}]
If $(M,\xi)$ has Giroux torsion, then its Ozsv\'ath-Szab\'o contact
invariant vanishes.  Moreover, if the Giroux torsion domain 
(see Definition~\ref{defn:GirouxTorsion}) separates~$M$,
then the Ozsv\'ath-Szab\'o invariant with twisted coefficients also
vanishes.
\end{thmu}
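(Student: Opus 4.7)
The plan is to prove the first statement via a local bypass argument inside the Giroux torsion domain and then bootstrap to a global vanishing statement by naturality; for the ``moreover'' claim, the separating hypothesis provides a nontrivial second homology class that keeps the local argument nonzero in the twisted group ring.

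First, I would use the Honda-Kazez-Matic description of the Ozsváth-Szabó contact invariant via partial open books: one picks a contact-compatible partial open book for $(M,\xi)$ in which the Giroux torsion domain $T_G \cong T^2 \times [0,1]$ appears as a preferred subregion, with part of the page and monodromy localized there. Inside $T_G$ I would implement a bypass attachment along a carefully chosen Legendrian arc on a pre-Lagrangian torus and invoke the Ozsváth-Szabó bypass exact triangle relating the three contact structures that differ by bypass moves. One is $(M,\xi)$, a second differs from it by a half-torsion layer, and the third is overtwisted and hence has vanishing Ozsváth-Szabó invariant by the classical overtwisted vanishing theorem. A diagram chase in the exact triangle, together with an induction that iterates bypasses inside $T_G$, then forces $c(M,\xi) = 0$.

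For the moreover statement, I would replay the same bypass argument with $\widehat{HF}$ twisted by coefficients in the group ring $\ZZ[H^1(M;\ZZ)]$. The separating hypothesis on $T_G$ ensures that the torus $T_0 = T^2 \times \{1/2\} \subset T_G$ is homologically nontrivial, so its Poincaré dual $\eta \in H^1(M;\ZZ)$ produces a nontrivial element $e^\eta$ in the group ring; the bypass cobordism maps acquire factors of the form $1 - e^\eta$, which remain nonzero in the twisted chain complex, so that the untwisted algebraic manipulation goes through to yield vanishing of the twisted contact invariant. The main obstacle, and the step I expect to require the most care, is setting up the bypass exact triangle with twisted coefficients correctly: this requires identifying the relevant $\operatorname{Spin}^c$ structures on the bypass cobordism and verifying that the contact class is transported through the cobordism-induced maps with the correct $H^1$-twisting, which amounts to extending the Ozsváth-Szabó naturality machinery for contact classes to the twisted setting and checking compatibility with the Honda-Kazez-Matic construction.
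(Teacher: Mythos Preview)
This theorem is not proved in the paper at all: it is a result cited from \cite{GhigginiHondaVanhorn} and \cite{GhigginiHonda:twisted} and appears in the introduction purely as background and motivation. There is therefore no ``paper's own proof'' to compare your proposal against.

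That said, your sketch does track the actual strategy of the cited references fairly closely: Ghiggini, Honda and Van Horn-Morris do use the Honda--Kazez--Mati\'c partial open book description of the contact invariant together with bypass exact triangles localized in the torsion layer, and the twisted version in Ghiggini--Honda proceeds by upgrading the same argument to $\ZZ[H_2(M)]$-coefficients, where the separating hypothesis guarantees the relevant torus class is trivial so that the twisting factors do not obstruct the cancellation. One point to be careful about: in your description of the twisted case you say the separating hypothesis makes the torus ``homologically nontrivial'' and that the factor $1 - e^\eta$ ``remains nonzero''; this is backwards. Separating means the torus is \emph{nullhomologous}, so the twisting factor $e^{[T_0]} - 1$ \emph{vanishes}, which is precisely what allows the local vanishing argument to survive in twisted coefficients rather than being obstructed by extra terms. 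This is the same mechanism you can see in the ECH computation in \S\ref{subsubsec:vanishing} of the present paper.
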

The vanishing result with twisted coefficients, proved in
\cite{GhigginiHonda:twisted}, provides a new proof of the original
Eliashberg-Gromov result for overtwisted contact manifolds, in addition
to a large new class of tight contact $3$-manifolds 
that are not weakly fillable.  Quite recently, Patrick Massot
\cite{Massot:vanishing} has also produced examples of contact manifolds
with no Giroux torsion for which the untwisted 
Ozsv\'ath-Szab\'o invariant vanishes.

Holomorphic curves recently reappeared in this subject through
a third proof of the Giroux torsion result, due to the author
\cite{Wendl:fillable}.  Though it was not presented as such in that paper,
the argument in \cite{Wendl:fillable} can be viewed as a vanishing result
for the \emph{ECH contact invariant}, i.e.~the distinguished class in
Embedded Contact Homology that is conjectured to be isomorphic to the
Ozsv\'ath-Szab\'o contact invariant, and recently proved by
Taubes \cite{Taubes:ECH=SWF5} to be isomorphic to the corresponding object
in Seiberg-Witten Floer homology.  Let us briefly recall the definition
of ECH; a more detailed explanation, including the version with twisted 
coefficients, will be given in \S\ref{sec:ECH}.  Given a contact
$3$-manifold $(M,\xi)$ with nondegenerate
contact form $\lambda$ and generic compatible
almost complex structure $J$ on its symplectization, one defines a
chain complex generated by so-called \emph{orbit sets},
$$
\boldsymbol{\gamma} = ((\gamma_1,m_1),\ldots,(\gamma_n,m_n)),
$$
where $\gamma_1,\ldots,\gamma_n$ are closed Reeb orbits
and $m_1,\ldots,m_n$ are positive
integers, called \emph{multiplicities}.  A differential operator~$\p$ is
then defined by counting a certain class of embedded rigid 
$J$-holomorphic curves in the
symplectization of $(M,\xi)$, which can be viewed as cobordisms 
between orbit sets.  The homology of the resulting
chain complex is the Embedded Contact Homology 
$\ECH_*(M,\lambda,J)$.\footnote{The chain complex obviously depends on
$\lambda$ and $J$, but due to the isomorphism with Seiberg-Witten Floer
homology, it is now known that $\ECH_*(M,\lambda,J)$ is an
invariant of $(M,\xi)$, and actually even defines a topological invariant.
Unfortunately it is not yet known how to prove this
entirely within the context of ECH.}  The case $n=0$ is also allowed
among the generators, i.e.~the ``empty'' orbit set,
$$
\boldsymbol{\emptyset} = (),
$$
which is always a cycle in the homology, thus defining
a distinguished class~$[\boldsymbol{\emptyset}]$: this is the ECH contact 
invariant.  It vanishes
whenever one can find an orbit set $\boldsymbol{\gamma}$ satisfying
$$
\p\boldsymbol{\gamma} = \boldsymbol{\emptyset},
$$
which follows from the existence of a unique embedded 
$J$-holomorphic curve 
with positive ends approaching the orbits represented by $\boldsymbol{\gamma}$,
and no negative ends.  It should be clear why Eliashberg's calculation of
vanishing contact homology fits into this framework, and the proof in
\cite{Wendl:fillable} uses a similar construction: instead of a family of
planes, Giroux torsion gives a family of holomorphic cylinders 
(Figure~\ref{fig:GirouxTorsionDomain}) which can
be shown to be the \emph{only} cylinders asymptotic to their particular
Reeb orbits.

\begin{figure}
\begin{center}
\includegraphics{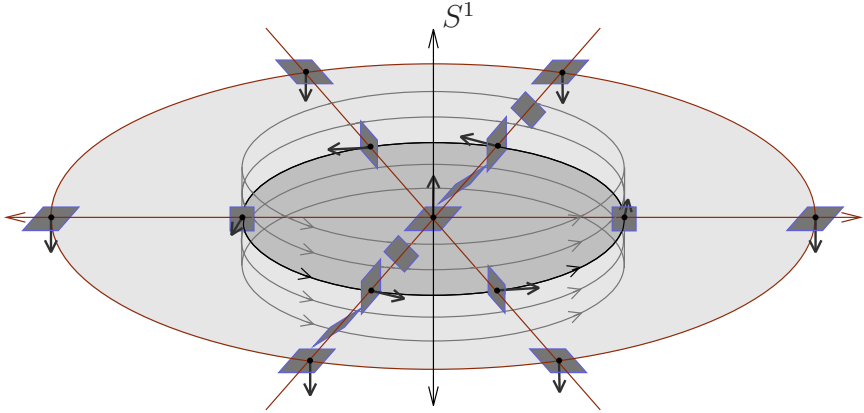}
\caption{\label{fig:LutzTube}
An embedded $J$-holomorphic plane asymptotic to a Reeb orbit of small
period in a Morse-Bott family (arrows indicate the Reeb vector field).
Every overtwisted contact manifold contains a solid torus that can be
foliated by a family of such planes, each of which is a subset of an 
overtwisted disk.}
\end{center}
\end{figure}
\begin{figure}
\begin{center}
\includegraphics{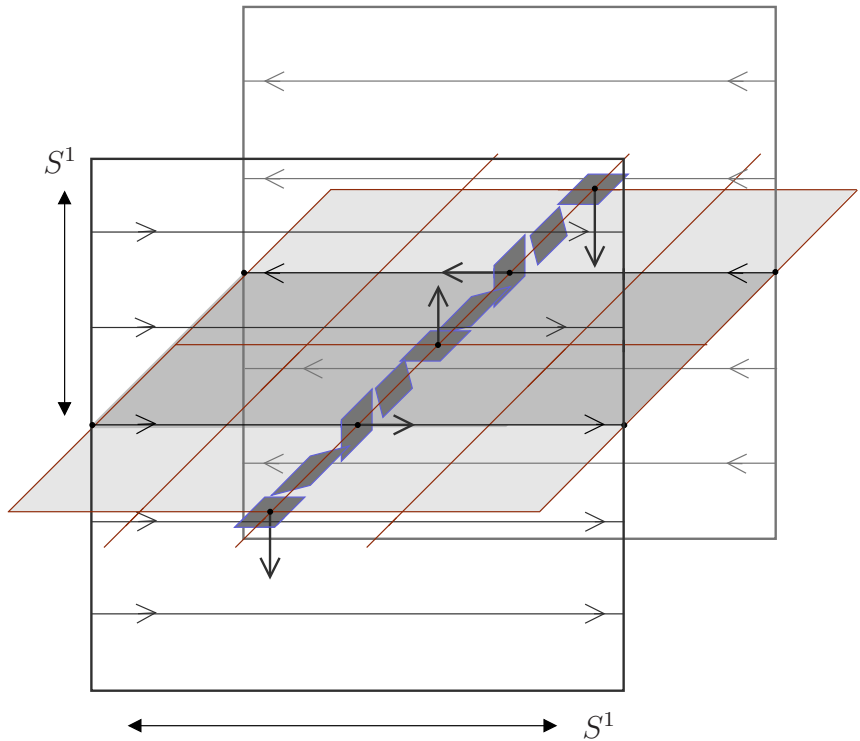}
\caption{\label{fig:GirouxTorsionDomain}
Any contact manifold with Giroux torsion contains a thickened torus
foliated by $J$-holomorphic cylinders asymptotic to Morse-Bott Reeb orbits
of small period.}
\end{center}
\end{figure}

The constructions in Figures~\ref{fig:LutzTube} 
and~\ref{fig:GirouxTorsionDomain} can be recast in purely contact geometric
terms by interpreting the embedded holomorphic curves as pages of 
open book decompositions that support the contact structure, where the 
binding circles have been ``blown up,'' i.e.~replaced by $2$-tori.  
In both cases there is also an extra region of ``padding'' just outside
the blown up open book, which is also foliated by surfaces transverse
to the Reeb vector field.
From this perspective, one can imagine using
similar ideas with more general open books to define much more general
filling obstructions, and this will be the main subject in the present
paper.  In particular, we shall reinterpret both overtwistedness and
Giroux torsion within the framework of an infinite hierarchy of new
filling obstructions that we call \emph{planar torsion}.  Its definition
combines two simple notions that are fundamental in contact topology:
open book decompositions supporting contact structures, as
introduced by Giroux \cite{Giroux:openbook}, and
the contact fiber sum along codimension~2 contact submanifolds,
originally due to Gromov \cite{Gromov:PDRs} and Geiges 
\cite{Geiges:constructions}.

We will define a contact manifold to have 
\emph{planar $k$-torsion} for some
integer $k \ge 0$ if it contains a certain kind of subset called a
\emph{planar $k$-torsion domain}.  Roughly speaking, the latter
is a compact contact $3$-manifold $(M,\xi)$, possibly with boundary, 
that contains a nonempty set of disjoint pre-Lagrangian tori  
dividing it into two pieces:
\begin{itemize}
\item A \emph{planar piece} $M^P$, which is disjoint from $\p M$ and
looks like a connected open book with some binding components blown up
and/or attached to each other by contact fiber sums.
The pages must have genus zero and $k+1$ boundary components.
\item The \emph{padding} $M \setminus M^P$, which contains $\p M$ and
consists of one or more arbitrary open books, again with some binding
components blown up or fiber summed together.
\end{itemize}

\begin{figure}
\begin{center}
\includegraphics{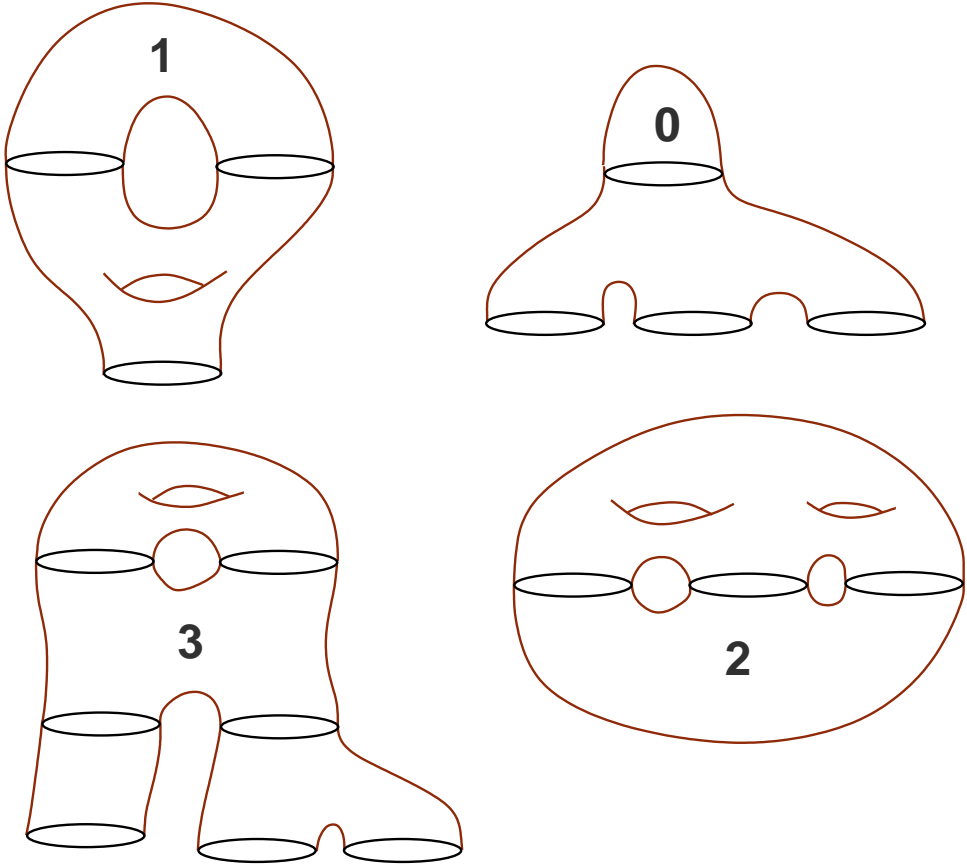}
\caption{\label{fig:torsionDomains}
Various planar $k$-torsion domains, with the order~$k \ge 0$ indicated 
within the planar piece.  Each picture shows a surface $\Sigma$ that
defines a manifold $S^1 \times \Sigma$ with an
$S^1$-invariant contact structure~$\xi$.  The multicurves that divide
$\Sigma$ are the sets of all points $z \in \Sigma$ at which
$S^1 \times \{z\}$ is Legendrian.  See also Figure~\ref{fig:moreTorsion}.}
\end{center}
\end{figure}

The tori bounding $M^P$ are themselves the result of a contact fiber
sum operation that attaches two separate open books to each other 
along components of their binding.
Some simple special cases of the form $S^1 \times \Sigma$ are shown in 
Figure~\ref{fig:torsionDomains}, and we will give a precise explanation 
of the general construction in \S\ref{subsec:definition}.
In a limited set of cases, summing open books together in this way 
leads to a contact manifold that may be strongly fillable,
but whose fillings can be classified up to symplectic deformation in
terms of diffeomorphism classes of Lefschetz fibrations---this 
classification will be examined in the follow-up paper \cite{Wendl:fiberSums}.
Outside of these cases, the result is always a nonfillable contact manifold,
and this situation will be our focus in all that follows.
More precise versions of the following results will be explained
in~\S\ref{subsec:definition}.

\begin{mainthm} \ 
\begin{enumerate}
\item
If $(M,\xi)$ has planar $k$-torsion for any integer $k \ge 0$, then it is not
strongly fillable; in fact, it does not admit a contact type embedding
into any closed symplectic $4$-manifold (cf.~Theorems~\ref{thm:nonseparating}
and~\ref{thm:nonfillable}).  Moreover, its untwisted
ECH contact invariant vanishes (cf.~Theorem~\ref{thm:ECH}).
\item
Under an extra topological assumption on a planar torsion domain,
analogues of the above statements also hold for \emph{weak} fillings and
the \emph{twisted} ECH contact invariant (cf.~Theorem~\ref{thm:twisted}).
\item $(M,\xi)$ has planar $0$-torsion if and only if it is overtwisted,
and every contact manifold with Giroux torsion also has planar $1$-torsion
(cf.~Corollary~\ref{cor:lowerTorsion}).
\item For every integer $k \ge 2$, there exist contact manifolds that have
planar $k$-torsion but no Giroux torsion (cf.~Corollary~\ref{cor:higherOrder}
and Figure~\ref{fig:noGirouxTorsion}).
\item If $(M,\xi)$ is supported by an open book decomposition with binding
$B \subset M$, then $M \setminus B$ has no planar $k$-torsion for
any $k \ge 0$ (cf.~Theorem~\ref{thm:complement}).
\end{enumerate}
\end{mainthm}

\begin{remark}
One can rephrase the above results by defining a contact invariant,
$$
\PT(M,\xi) := \sup \left\{ k \ge 0 \ \big|\ 
\text{$(M,\xi)$ has no planar $\ell$-torsion for any $\ell < k$} \right\},
$$
which takes values in $\NN \cup \{0,\infty\}$ and is infinite if and only if
$(M,\xi)$ has no planar torsion.  Then $\PT(M,\xi) < \infty$ always implies 
that $(M,\xi)$ is not
strongly fillable, and one can think of manifolds with larger
values of $\PT(M,\xi)$ as being ``closer'' to being fillable, a statement
that can be made more precise in the algebraic setting of Symplectic Field
Theory, cf.~\cite{LatschevWendl}.  Our results thus show that
$\PT(M,\xi) \le 1$ whenever $(M,\xi)$ has 
Giroux torsion, $\PT(M,\xi) = 0$ if and only if $(M,\xi)$ is
overtwisted, and there exist contact manifolds without Giroux torsion
such that $\PT(M,\xi) < \infty$.
One can show in fact that the examples treated in
Corollary~\ref{cor:higherOrder} have $\PT(M,\xi) = k$ for any given
integer $k \ge 2$, but this requires SFT methods which go beyond the
scope of the present paper, see \cite{LatschevWendl}.
It should be emphasized here that the scale defined by the invariant
$\PT(M,\xi)$ measures something completely different from the standard
quantitative measurement of Giroux torsion; the latter counts
the maximum number of
adjacent Giroux torsion domains that can be embedded in $(M,\xi)$, and can
take arbitrarily large values while $\PT(M,\xi) \le 1$.
Likewise, $(M,\xi)$ has Giroux torsion zero whenever $\PT(M,\xi) \ge 2$.
\end{remark}

Note that whenever the ECH contact invariant of $(M,\xi)$ vanishes,
the Taubes isomorphism together with results in Seiberg-Witten theory
\cite{KronheimerMrowka:contact} imply the fact that
$(M,\xi)$ is not fillable, but this makes for an extremely indirect proof
of nonfillability.  The \emph{right} proof in our context should be one
that uses only 
holomorphic curves, and for the case of planar torsion and strong fillings
we will also do this, thus avoiding the use of Seiberg-Witten theory.
There is a similarly more direct proof for the result on weak fillings, 
which is carried out in a separate paper of the author with
Klaus Niederkr\"uger \cite{NiederkruegerWendl}.
The simplest
examples of planar $k$-torsion with $k \ge 2$ are manifolds of the form
$S^1 \times \Sigma$ with $S^1$-invariant contact structures: we will
explain these in \S\ref{subsec:examples}.
The proofs of the results stated above and in \S\ref{subsec:definition}
will then be given in \S\ref{sec:proofs}, based on some more technical
results in \S\ref{sec:openbook}.

The technical work that makes these computations possible is a new
existence and uniqueness theorem, proved in \S\ref{subsec:bigTheorem},
for certain holomorphic curves in the
presence of supporting open book decompositions.  Let us briefly
explain the main idea and its context.

We recall first some basic notions involving open books:
if $M$ is a closed and oriented $3$-manifold, an 
\emph{open book decomposition} is a fibration 
$$
\pi : M \setminus B \to S^1,
$$ 
where 
$B \subset M$ is an oriented link called the \emph{binding}, and the closures
of the fibers are called \emph{pages}: these are compact, oriented and
embedded surfaces with oriented boundary equal to~$B$.  
An open book is called \emph{planar}
if the pages are connected and have genus zero, and it is said to \emph{support}
a contact structure $\xi$ if the latter can be written as $\ker\lambda$
for some contact form $\lambda$ (called a \emph{Giroux form}) whose induced
Reeb vector field $X_\lambda$ is positively transverse to the interiors
of the pages and positively tangent to the binding.  
The latter definition is due to Giroux \cite{Giroux:openbook}, who established
a groundbreaking one-to-one correspondence between isomorphism classes of 
contact manifolds and
their supporting open books up to right-handed stabilization.

Historically, open books appeared in contact topology much earlier via the 
existence theorem of Thurston and Winkelnkemper \cite{ThurstonWinkelnkemper}, 
which in modern terminology says that every
open book decomposition on a $3$-manifold 
supports a contact structure.  In the 1990's, they also 
appeared in dynamical applications due to Hofer,
Wysocki and Zehnder \cites{HWZ:tight3sphere,HWZ:convex}, whose
work exploited the fact that
certain families of punctured, asymptotically cylindrical 
$J$-holomorphic curves in the symplectization $\RR\times M$
of a contact $3$-manifold~$M$ naturally give rise to the pages of
supporting open books
on~$M$---indeed, whenever such a curve has an embedded projection to~$M$,
the nonlinear Cauchy-Riemann equation guarantees that this embedding is
also positively transverse to~$X_\lambda$.  It is thus natural to ask
whether \emph{every} supporting open book can be lifted in this way to
a family of holomorphic curves in the symplectization, typically
referred to as a \emph{holomorphic open book}.

Hofer observed
in \cite{Hofer:real} a fundamental problem with this idea in general:
looking closely at the punctured version of the Riemann-Roch formula, one
finds that the space of holomorphic curves with the necessary intersection 
theoretic properties to form open books has the correct dimension if and only 
if the genus is zero, and otherwise its virtual dimension is too low.
Thus for planar open books there is no trouble---this fact was used by
Abbas, Cieliebak and Hofer \cite{ACH} to solve the Weinstein conjecture
for planar contact manifolds, and has since also found applications to
the study of symplectic fillings \cite{Wendl:fillable}.  Hofer suggested
handling the higher genus case by introducing a ``cohomological perturbation''
into the nonlinear Cauchy-Riemann equation in order to raise the Fredholm
index.  This program has recently been carried out by
Casim Abbas \cite{Abbas:openbook} (see also \cite{vonBergmann:embedded}),
though applications to problems such as the Weinstein conjecture are as
yet elusive, as the compactness theory for the modified nonlinear
Cauchy-Riemann equation is quite difficult (cf.~\cite{AbbasHoferLisi}).

An alternative approach was introduced by the present author in
\cite{Wendl:openbook}, which produced a concrete construction of the
planar holomorphic open books needed for the applications 
\cite{ACH} and \cite{Wendl:fillable},
but also produced a \emph{non-generic} construction that works
for arbitrary open books, of any genus.  The non-generic construction
breaks down under small perturbations of the data, which may seem to
limit its value, but in fact it is quite useful: we will adapt
it in \S\ref{sec:openbook} to extend the results of \cite{Wendl:openbook}
to a full classification of $J$-holomorphic curves in $\RR\times M$ whose
positive ends approach the binding orbits of a supporting open book.
We shall do this in a more general context, involving not just one but
potentially an ensemble of open books with some of their binding orbits
blown up and attached to each other by contact fiber sums.  The key
fact making all of this possible is that the specific almost complex
structure for which holomorphic open books
always exist is compatible with an \emph{exact} stable Hamiltonian structure,
which admits a well behaved perturbation to a suitable contact form.
For the ensemble of open books provided by a planar torsion domain, this
result will make it easy
to find an orbit set in the ECH chain complex that satisfies
$\p \boldsymbol{\gamma} = \boldsymbol{\emptyset}$.

The results in \S\ref{sec:openbook} have further applications not treated
in this paper, of which the most obvious is the computation of a related
filling obstruction that lives in Symplectic Field Theory.
The latter is a generalization
of contact homology introduced by Eliashberg, Givental and Hofer \cite{SFT},
that defines contact invariants by counting $J$-ho\-lo\-mor\-phic 
curves with arbitrary genus and positive and negative ends 
in symplectizations of arbitrary dimension.
As shown in joint work of the author with Janko Latschev
\cite{LatschevWendl}, SFT contains a natural notion of \emph{algebraic}
torsion, which one can define as follows.
In the picture developed by Cieliebak and Latschev in
\cite{CieliebakLatschev:propaganda}, SFT associates to any contact manifold with
appropriate choices of contact form and almost complex structure a
graded $\BV_\infty$-algebra generated by the symbols $q_\gamma$
(for each ``good'' closed Reeb orbit~$\gamma$) and~$\hbar$; the latter can
be regarded as part of the coefficient ring, but must be handled with care 
since it has a nontrivial degree.  The homology of this $\BV_\infty$-algebra 
is then an invariant of the contact structure, and it is natural to say that
$(M,\xi)$ has \emph{algebraic $k$-torsion} if the homology satisfies
the relation
$$
[\hbar^k] = 0.
$$
For $k=0$, this means $1 = 0$ and thus coincides with the notion of
\emph{algebraic overtwistedness} 
(cf.~\cite{BourgeoisNiederkrueger:algebraically}).
It follows easily from the formalism of 
SFT\footnote{For this informal discussion
we are taking it for granted that SFT is well defined, which was not proved
in \cite{SFT} and is quite far from obvious.  The rigorous definition of SFT, 
including the necessary abstract perturbations to achieve transversality,
is a large project in progress by Hofer-Wysocki-Zehnder, see for example
\cite{Hofer:CDM}.} that algebraic torsion of any order gives an obstruction 
to strong symplectic
filling; in fact it is stronger, since it also implies obstructions to the
existence of exact symplectic cobordisms between certain contact manifolds.
We will not go into these matters any further here, except to mention the
following justification for keeping track of the 
integer $k \ge 0$ in planar $k$-torsion:

\begin{thmu}[\cite{LatschevWendl}]
If $(M,\xi)$ has planar $k$-torsion, then it also has algebraic $k$-torsion.
\end{thmu}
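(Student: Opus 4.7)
The plan is to translate the holomorphic curve count established by the main existence and uniqueness theorem of Section~\ref{sec:openbook} into an algebraic identity in the SFT $\BV_\infty$-algebra. Applied to the planar piece $M^P$ of a planar $k$-torsion domain, the main theorem produces, after a suitable Morse-Bott perturbation of the contact form and a generic choice of compatible almost complex structure, an essentially unique rigid $J$-holomorphic curve that is a perturbation of a page of $M^P$. This curve has genus zero, exactly $k+1$ positive punctures (one for each boundary component of a page, asymptotic to the Reeb orbits created by perturbing the blown-up binding), and no negative punctures.

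Next, I would feed this curve count into the formalism of Cieliebak and Latschev \cite{CieliebakLatschev:propaganda}, in which a rigid genus-zero curve in a symplectization with $n$ positive punctures and no negative punctures contributes to the SFT $\BV_\infty$-differential with $\hbar$-weight $n-1$. For the curve produced above, $n = k+1$, and the contribution has $\hbar$-weight exactly $k$. Combined with the uniqueness clause, which rules out cancellation against other curves of the same topological type, this yields a chain-level identity of the form $\mathbf{D}(X) = \hbar^k$, where $X$ is (essentially) the product of $q$-variables associated to the $k+1$ asymptotic orbits of the curve; hence $[\hbar^k] = 0$ in homology, which is the definition of algebraic $k$-torsion. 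As sanity checks, $k=0$ reduces to the standard proof of algebraic overtwistedness via a rigid overtwisted plane, giving $[\hbar^0] = [1] = 0$, and $k=1$ reduces to the corresponding SFT statement $[\hbar] = 0$ for Giroux torsion via a rigid cylinder.

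The main obstacle is foundational: the rigorous definition of SFT depends on the abstract perturbation (polyfold) framework of Hofer-Wysocki-Zehnder, which is still in development, so the theorem must be understood conditionally on that program being completed. Beyond this, the analytic subtlety is to exclude \emph{other} rigid configurations---higher-genus curves, multiple covers, or broken holomorphic buildings extending into the padding region---that might contribute competing terms to the same component of~$\mathbf{D}$ and cancel the $\hbar^k$ term. The existence and uniqueness theorem directly rules out competing curves in $M^P$ asymptotic to the same orbits, while topological and energy-based arguments analogous to those used in the proof of Theorem~\ref{thm:ECH} are needed to rule out curves crossing the pre-Lagrangian tori bounding $M^P$. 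A careful bookkeeping of the $\BV_\infty$ structure maps then confirms that the surviving contribution is precisely $\hbar^k$.
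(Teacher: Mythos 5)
This statement is not proved in the paper at all: it is imported verbatim from \cite{LatschevWendl}, and the surrounding text only sketches the idea informally (modulo the foundational status of SFT, which the paper flags in a footnote exactly as you do). Your proposal reproduces that reference's strategy essentially correctly: Theorem~\ref{thm:openbook} supplies the unique rigid genus-zero curve with $k+1$ positive punctures and no negative ends, and its contribution to the SFT differential indeed carries $\hbar$-weight $g-1+s^+=k$, consistent with the $k=0$ and $k=1$ sanity checks. The one step you compress into ``careful bookkeeping'' that deserves emphasis is the Morse--Bott-to-nondegenerate perturbation: as in the ECH computation of \S\ref{subsubsec:vanishing}, each interface/boundary torus splits into an elliptic and a hyperbolic orbit and spawns two extra index-$1$ gradient-flow cylinders $v_j^\pm$ that also contribute to $\mathbf{D}(X)$, so one must choose the monomial $X$ (with exactly one hyperbolic generator) and the coherent orientations so that these contributions cancel or are otherwise controlled, rather than relying on uniqueness of the page alone.
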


The fact that planar torsion obstructs symplectic filling can thus also be
viewed as a consequence of a computation in SFT.  In \cite{LatschevWendl},
we show that among the examples treated in \S\ref{subsec:examples},
one can find contact manifolds $(M_k,\xi_k)$ for any integer $k \ge 1$
that have algebraic torsion of order~$k$ but not~$k-1$: it follows that
$k$ is also the smallest integer for which $(M_k,\xi_k)$ has planar
$k$-torsion.  The formalism
of SFT then implies that there is no exact symplectic cobordism with
positive boundary $(M_{k_+},\xi_{k_+})$ and negative boundary
$(M_{k_-},\xi_{k_-})$ if $k_- > k_+$.  Thus the enhanced algebraic
structure of SFT leads to non-existence results that are not immediately
apparent from any of the distinctly $3$-dimensional invariants
discussed above.  Another obvious advantage of SFT is that one can sensibly
define algebraic torsion for contact manifolds
of arbitrary dimensions, not only dimension~$3$.  In higher dimensions
however, no examples are yet known except for $0$-torsion,
e.g.~a contact manifold is algebraically overtwisted whenever it
contains a \emph{plastikstufe} \cites{Plastikstufe,BourgeoisNiederkrueger:PS}
or is supported by a left-handed stabilization of an open book
\cite{BourgeoisVanKoert}.

\subsection{Planar torsion and its consequences}
\label{subsec:definition}
We now proceed to make the aforementioned definitions and results precise.

\subsubsection{Summed open books}
\label{subsec:summed}

Assume $M$ is an oriented smooth manifold containing two disjoint 
oriented submanifolds
$N_1,N_2 \subset M$ of real codimension~$2$, which admit an
orientation preserving diffeomorphism 
$\varphi : N_1 \to N_2$
covered by an orientation reversing isomorphism $\Phi : \nu N_1 \to \nu N_2$
of their normal bundles.  Then we can define a new smooth manifold $M_\Phi$, 
the \emph{normal sum} of $M$ along $\Phi$, by removing neighborhoods
$\nN(N_1)$ and $\nN(N_2)$ of $N_1$ and $N_2$ respectively, then gluing 
together the resulting manifolds with boundary along an orientation
reversing diffeomorphism
$$
\p\nN(N_1) \to \p\nN(N_2)
$$
determined by~$\Phi$.  This operation determines $M_\Phi$ up
to diffeomorphism, and is also well defined
in the contact cateogory: if $(M,\xi)$ is a contact manifold and $N_1,N_2$
are contact submanifolds with $\varphi : N_1 \to N_2$ a contactomorphism,
then $M_\Phi$ admits a contact structure $\xi_\Phi$, unique up to isotopy,
which agrees with $\xi$ away from $N_1$ and $N_2$ 
(cf.~\cite{Geiges:book}*{\S 7.4}).
We then call $(M_\Phi,\xi_\Phi)$ the \emph{contact fiber sum} of
$(M,\xi)$ along~$\Phi$.

We will consider the special case of this construction where $N_1$ and
$N_2$ are disjoint components\footnote{We use the word \emph{component}
throughout to mean any open and closed subset, i.e.~a disjoint union of
connected components.}
of the binding of an open book decomposition
$$
\pi : M \setminus B \to S^1
$$
that supports~$\xi$.
Then $N_1$ and $N_2$ are automatically contact submanifolds, whose normal 
bundles come with distinguished trivializations determined by the open book.
In the following, we shall always assume that $M$ is oriented and 
the pages and binding are
assigned the natural orientations determined by the open book, so in 
particular the binding is the oriented boundary of the pages.

\begin{defn}
Assume $\pi : M \setminus B \to S^1$ is an open book decomposition on $M$.
By a \emph{binding sum} of the open book, we mean 
any normal sum $M_\Phi$ along an orientation reversing
bundle isomorphism $\Phi : \nu N_1 \to \nu N_2$ covering a diffeomorphism
$\varphi : N_1 \to N_2$, where $N_1,N_2 \subset B$ are disjoint components 
of the binding and $\Phi$ is constant with respect to the distinguished 
trivialization determined by~$\pi$.  The resulting smooth manifold will
be denoted by
$$
M_{(\pi,\varphi)} := M_\Phi,
$$
and we denote by $\iI_{(\pi,\varphi)} \subset M_{(\pi,\varphi)}$ the
closed hypersurface obtained by the identification of $\p\nN(N_1)$ with
$\p\nN(N_2)$, which we'll also call the \emph{interface}.
We will then refer to the data $(\pi,\varphi)$ as a \emph{summed open book
decomposition} of $M_{(\pi,\varphi)}$, whose
\emph{binding} is the (possibly empty) codimension~$2$ submanifold
$$
B_\varphi := B \setminus (N_1 \cup N_2) \subset M_{(\pi,\varphi)}.
$$
The \emph{pages} of $(\pi,\varphi)$ are the connected components of the
fibers of the naturally induced fibration
$$
\pi_\varphi : M_{(\pi,\varphi)} \setminus (B_\varphi \cup \iI_{(\pi,\varphi)})
\to S^1;
$$
if $\dim M = 3$, then these are
naturally oriented open surfaces whose closures are generally
immersed (distinct boundary components may sometimes coincide).

If $\xi$ is a contact structure
on $M$ supported by $\pi$, we will denote the induced contact structure
on $M_{(\pi,\varphi)}$ by
$$
\xi_{(\pi,\varphi)} := \xi_\Phi
$$
and say that $\xi_{(\pi,\varphi)}$ is \emph{supported by} the summed
open book $(\pi,\varphi)$.
\end{defn}

It follows from the corresponding fact for
ordinary open books that every summed open book decomposition supports a
contact structure, which is unique up to isotopy: in fact it depends only 
on the isotopy class of the open book $\pi : M \setminus B \to S^1$, the
choice of binding components $N_1,N_2 \subset B$ and isotopy class of 
diffeomorphism $\varphi : N_1 \to N_2$.

Throughout this discussion, $M$, $N_1$, $N_2$ and the pages of 
$\pi$ are all allowed to be disconnected (note that $\pi : M \setminus B
\to S^1$ will have disconnected pages if~$M$ itself is disconnected).
In this way, we can incorporate
the notion of a binding sum of \emph{multiple}, separate (perhaps summed)
open books, e.g.~given
$(M_i,\xi_i)$ supported by $\pi_i : M_i \setminus B_i \to S^1$ with
components $N_i \subset B_i$ for $i=1,2$, 
and a diffeomorphism $\varphi : N_1 \to N_2$,
a binding sum of $(M_1,\xi_1)$ with $(M_2,\xi_2)$ can be defined by
applying the above construction to the disjoint union $M_1 \sqcup M_2$.
We will generally use the shorthand notation
$$
M_1 \boxplus M_2
$$
to indicate manifolds constructed by binding sums of this type,
where it is understood that~$M_1$ and~$M_2$ both come with contact structures
and supporting summed open books, which combine to determine
a summed open book and supported contact structure on $M_1 \boxplus M_2$.

\begin{example}
\label{ex:T3}
Consider the tight contact structure on $M := S^1 \times S^2$ with its 
supporting open book decomposition
$$
\pi : M \setminus (\gamma_0 \cup \gamma_\infty) \to S^1 :
(t,z) \mapsto z / |z|,
$$
where $S^2 = \CC \cup \{\infty\}$, $\gamma_0 := S^1 \times \{0\}$
and $\gamma_\infty := S^1 \times \{\infty\}$.  This open book has cylindrical
pages and trivial monodromy.  Now let $M'$ denote a second copy of the
same manifold and 
$$
\pi' : M' \setminus (\gamma_0' \cup \gamma_\infty') \to S^1
$$
the same open book.  Defining the binding sum $M \boxplus M'$ by
pairing $\gamma_0$ with $\gamma_0'$ and $\gamma_\infty$ with $\gamma_\infty'$, 
we obtain the standard contact $T^3$.  In fact, each of the tight contact 
tori $(T^3,\xi_n)$, where
$$
\xi_n = \ker\left[ \cos(2\pi n \theta)\ dx + \sin(2\pi n\theta)\ dy \right]
$$
in coordinates $(x,y,\theta) \in S^1 \times S^1\times S^1$, can be obtained
as a binding sum of $2n$ copies of the tight $S^1 \times S^2$; 
see Figure~\ref{fig:T3}.
\end{example}

\begin{figure}[hbt]
\begin{center}
\includegraphics{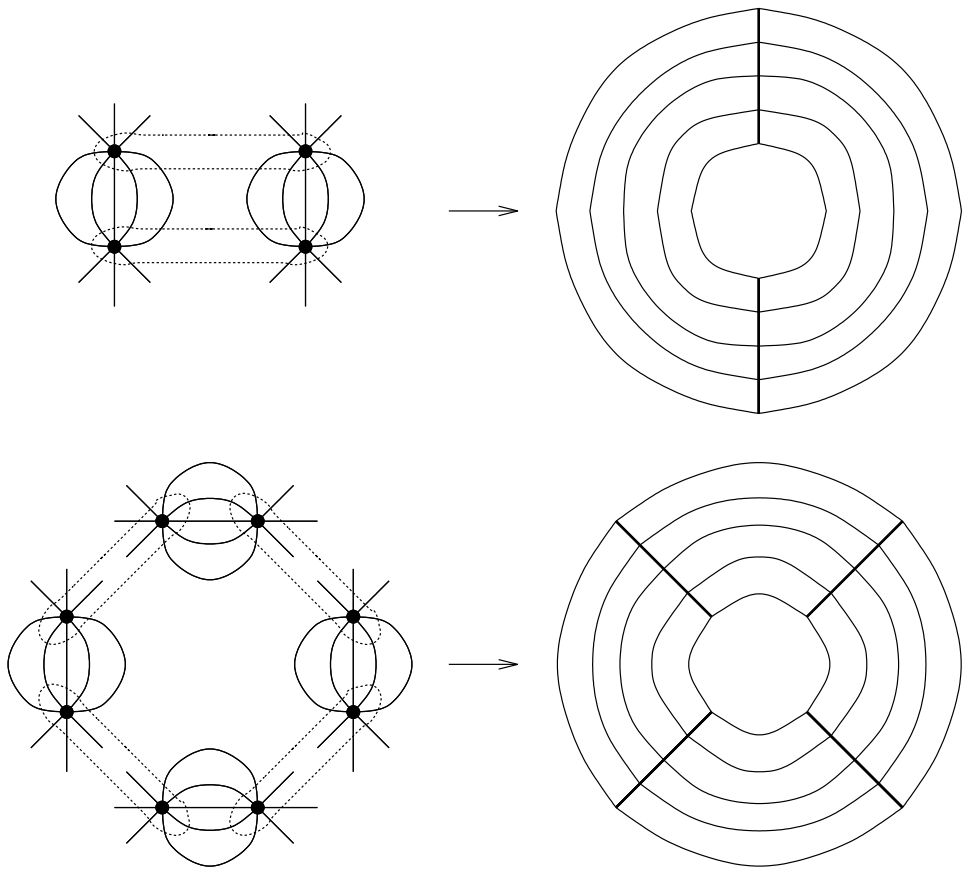}
\caption{\label{fig:T3} Two ways of producing tight contact tori from
$2n$ copies of the tight $S^1 \times S^2$.  At left, copies of $S^1 \times S^2$
are represented by open books with two binding components (depicted here
through the page)
and cylindrical pages.  For each dotted oval surrounding two binding
components, we construct the binding sum to produce the manifold
at right, containing $2n$ special pre-Lagrangian tori (the black line segments)
that separate regions foliated by cylinders.  
The results are $(T^3,\xi_n)$ for $n=1,2$.}
\end{center}
\end{figure}

\begin{example}
\label{ex:selfSum}
Using the same open book decomposition on the tight $S^1 \times S^2$
as in Example~\ref{ex:T3}, one can take only a single copy and perform
a binding sum along the two binding components~$\gamma_0$ and~$\gamma_\infty$.
The contact manifold produced by this operation 
is the quotient of $(T^3,\xi_1)$ by
the contact involution $(x,y,\theta) \mapsto (-x,-y,\theta + 1/2)$, and
is thus the torus bundle over $S^1$ with monodromy~$-1$.  The resulting
summed open book on~$T^3 / \ZZ_2$ has connected cylindrical pages, 
empty binding and a single interface
torus of the form $\iI_{(\pi,\varphi)} = \{ 2\theta = 0\}$, inducing a fibration
$$
\pi_\varphi : (T^3 / \ZZ_2) \setminus \iI_{(\pi,\varphi)} 
\to S^1 : [(x,y,\theta)] \mapsto 
\begin{cases}
y & \text{ if $\theta \in (0,1/2)$,}\\
-y & \text{ if $\theta \in (1/2,1)$.}
\end{cases}
$$
\end{example}

The following two special cases of summed open books are of crucial importance.

\begin{example}
An ordinary open book can also be regarded as a summed open book: we simply
take $N_1$ and $N_2$ to be empty.
\end{example}

\begin{example}
\label{ex:symmetric}
Suppose $(M_i,\xi_i)$ for $i=1,2$ are closed connected contact $3$-manifolds 
with supporting open books $\pi_i$ whose pages are diffeomorphic.
Then we can set $N_1=B_1$ and $N_2=B_2$, choose a diffeomorphism
$B_1 \to B_2$ and define $M = M_1 \boxplus M_2$ accordingly.  The
resulting summed open book is called \emph{symmetric}; observe that it
has empty binding, since every binding component of $\pi_1$ and
$\pi_2$ has been summed.
A simple example of this construction is $(T^3,\xi_1)$ as explained in
Example~\ref{ex:T3}, and for an even simpler example, summing two open
books with disk-like pages produces the tight $S^1 \times S^2$.
\end{example}

\begin{remark}
\label{remark:quasi}
There is a close relationship between summed open books and the notion of
open books with quasi-compatible contact structures, introduced by
Etnyre and Van Horn-Morris \cite{EtnyreVanhorn}.  A contact structure~$\xi$ 
is said to be \emph{quasi-compatible} with an open book if it admits a contact
vector field that is positively transverse to the pages and positively
tangent to the binding; if the contact vector field is also positively
transverse to~$\xi$, then
this is precisely the supporting condition, but quasi-compatibility is
quite a bit more general, and can allow e.g.~open books with empty binding.
A summed open book on a $3$-manifold 
gives rise to an open book with quasi-compatible contact
structure whenever a certain orientation condition is satisfied: this is
the result in particular whenever we construct binding sums of separate
open books that are labeled with signs in such a way that
every interface torus separates a positive piece from a negative piece.
Thus the tight $3$-tori in Figure~\ref{fig:T3} are examples, in this
case producing an open book with empty binding (i.e.~a fibration over~$S^1$)
that is quasi-compatible with all of the contact structures~$\xi_n$.  
However, it is easy to
construct binding sums for which this is not possible, 
e.g.~Example~\ref{ex:selfSum}.
\end{remark}

\subsubsection{Blown up open books and Giroux forms}
\label{subsec:blowup}

We now generalize the discussion to include manifolds with boundary.
Suppose $M_{(\pi,\varphi)}$ is a closed $3$-manifold with
summed open book $(\pi,\varphi)$, which has
binding~$B_\varphi$ and interface~$\iI_{(\pi,\varphi)}$, and 
$N \subset B_\varphi$ is a component of
its binding.  For each connected component $\gamma \subset N$,
identify a tubular neighborhood
$\nN(\gamma)$ of $\gamma$ with a solid torus $S^1 \times \DD$, defining
coordinates $(\theta,\rho,\phi) \in S^1 \times \DD$, where
$(\rho,\phi)$ denote polar coordinates\footnote{Throughout this paper,
we use polar coordinates $(\rho,\phi)$ on subdomains of~$\CC$ 
with the angular coordinate~$\phi$
normalized to take values in $S^1 = \RR / \ZZ$, i.e.~the actual \emph{angle}
is $2\pi\phi$.} on the disk $\DD$ and $\gamma$ is
the subset $S^1 \times \{0\} = \{ \rho = 0\}$.  Assume also that these
coordinates are adapted to the summed open book, in the sense that the 
orientation of $\gamma$ as a binding component agrees with the
natural orientation of $S^1 \times \{0\}$, and the intersections of the pages
with $\nN(\gamma)$ are of the form $\{ \phi = \text{const} \}$.
This condition determines the coordinates up to isotopy.  Then we define
the \emph{blown up} manifold $M_{(\pi,\varphi,\gamma)}$ from $M_{(\pi,\varphi)}$ by replacing
$\nN(\gamma) = S^1 \times \DD$ with $S^1 \times [0,1] \times S^1$, using
the same coordinates $(\theta,\rho,\phi)$ on the latter, i.e.~the binding
circle $\gamma$ is replaced by a $2$-torus, which now forms the boundary
of $M_{(\pi,\varphi,\gamma)}$.  If $\xi_{(\pi,\varphi)}$ is a contact structure
on $M_{(\pi,\varphi)}$ supported by $(\pi,\varphi)$, then we can define
an appropriate contact structure $\xi_{(\pi,\varphi,\gamma)}$ on
$M_{(\pi,\varphi,\gamma)}$ as follows.
Since $\gamma$ is a
positively transverse knot, the contact neighborhood theorem
allows us to choose the coordinates $(\theta,\rho,\phi)$ so that
$$
\xi_{(\pi,\varphi)} = \ker \left( d\theta + \rho^2 d\phi \right)
$$
in a neighborhood of~$\gamma$.
This formula also gives a well defined distribution on $M_{(\pi,\varphi,\gamma)}$, 
but the contact condition fails at the boundary $\{ \rho = 0 \}$.  We fix this
by making a $C^0$-small change in $\xi_{(\pi,\varphi)}$ to define a contact 
structure of the form
$$
\xi_{(\pi,\varphi,\gamma)} = \ker \left[ d\theta + g(\rho)\ d\phi \right],
$$
where $g(\rho) = \rho^2$ for $\rho$ outside a neighborhood of zero,
$g'(\rho) > 0$ everywhere and $g(0) = 0$.

Performing the above operation at all connected components $\gamma \subset N
\subset B_\varphi$ yields a compact manifold 
$M_{(\pi,\varphi,N)}$, generally with boundary, carrying a still
more general decomposition determined by the data $(\pi,\varphi,N)$, which
we'll call a \emph{blown up summed open book}.
We define its \emph{interface} to be the original interface
$\iI_{(\pi,\varphi)}$, and its \emph{binding} is
$$
B_{(\varphi,N)} = B_\varphi \setminus N.
$$
There is a natural diffeomorphism
$$
M_{(\pi,\varphi)} \setminus B_\varphi = M_{(\pi,\varphi,N)} 
\setminus \left(B_{(\varphi,N)} \cup \p M_{(\pi,\varphi,N)}\right),
$$
so the fibration
$\pi_\varphi : M_{(\pi,\varphi)} \setminus 
\left(B_\varphi \cup \iI_{(\pi,\varphi)}\right) \to S^1$ carries over to
$M_{(\pi,\varphi,N)} \setminus (B_{(\varphi,N)} \cup \iI_{(\pi,\varphi)} 
\cup \p M_{(\pi,\varphi,N)})$, and can then be
extended smoothly to the boundary to define a fibration
$$
\pi_{(\varphi,N)} :  M_{(\pi,\varphi,N)} \setminus \left(B_{(\varphi,N)} 
\cup \iI_{(\pi,\varphi)}\right) \to S^1.
$$
We will again refer to the connected components of the fibers
of~$\pi_{(\varphi,N)}$ as the \emph{pages} of $(\pi,\varphi,N)$, and
orient them in accordance with the coorientations induced by the fibration.
Their closures are immersed surfaces which occasionally may have pairs of
boundary components that coincide as oriented $1$-manifolds, e.g.~this can
happen whenever two binding circles within the same connected open book
are summed to each other.

Note that the fibration $\pi_{(\varphi,N)} :  
M_{(\pi,\varphi,N)} \setminus \left(B_{(\varphi,N)} 
\cup \iI_{(\pi,\varphi)}\right) \to S^1$ is not enough information to fully
determine the blown up open book $(\pi,\varphi,N)$, as it does 
not uniquely determine the ``blown down'' manifold $M_{(\pi,\varphi)}$.
Indeed, $M_{(\pi,\varphi)}$ determines on each boundary torus
$T \subset \p M_{(\pi,\varphi,N)}$ a distinguished basis
$$
\{m_T,\ell_T\} \subset H_1(T),
$$
where $\ell_T$ is a boundary component of a page and 
$m_T$ is determined by the
meridian on a small torus around the binding circle to be blown up.
Two different manifolds $M_{(\pi,\varphi)}$ may sometimes produce diffeomorphic
blown up manifolds $M_{(\pi,\varphi,N)}$, which will however have different
meridians~$m_T$ on their boundaries.  Similarly, each interface torus
$T \subset \iI_{(\pi,\varphi)}$ inherits a distinguished basis
$$
\{\pm m_T,\ell_T \} \subset H_1(T)
$$
from the binding sum operation, with the difference that the meridian
$m_T$ is only well defined up to a sign.

The binding sum of an open book $\pi : M\setminus B \to S^1$
along components $N_1 \cup N_2 \subset B$
can now also be understood as a
two step operation, where the first step is to blow up $N_1$ and $N_2$,
and the second is to attach the resulting boundary tori to each other via a
diffeomorphism determined by $\Phi : \nu N_1 \to \nu N_2$.  One can choose
a supported contact structure on the blown up open book which fits
together smoothly under this attachment to reproduce the construction of
$\xi_{(\pi,\varphi,N)}$ described above.

\begin{defn}
A blown up summed open book $(\pi,\varphi,N)$ is called \emph{irreducible}
if the fibers of the induced fibration $\pi_{(\varphi,N)}$ are connected.
\end{defn}

In the irreducible case, the pages can be parametrized in a single
$S^1$-family, e.g.~an ordinary connected open book is irreducible, but a
symmetric summed open book is not.  Any blown up summed open book can however
be decomposed uniquely into \emph{irreducible subdomains}
$$
M_{(\pi,\varphi,N)} = M_{(\pi,\varphi,N)}^1 \cup \ldots \cup
M_{(\pi,\varphi,N)}^\ell,
$$
where each piece $M_{(\pi,\varphi,N)}^i$ for $i=1,\ldots,\ell$ is a compact manifold,
possibly with boundary, defined as the closure in
$M_{(\pi,\varphi,N)}$ of the region filled by some smooth $S^1$-family of pages.
Thus $M_{(\pi,\varphi,N)}^i$ carries a natural blown up summed open book
of its own, whose binding and interface are subsets of $B_\varphi$ and
$\iI_{(\pi,\varphi)}$ respectively, and
$\p M_{(\pi,\varphi,N)}^i \subset \iI_{(\pi,\varphi)} \cup \p M_{(\pi,\varphi,N)}$.
One can also write
$$
M_{(\pi,\varphi,N)} = \check{M}_{(\pi,\varphi,N)}^1 \boxplus \ldots \boxplus
\check{M}_{(\pi,\varphi,N)}^\ell,
$$
where the manifolds $\check{M}_{(\pi,\varphi,N)}^i$ also naturally carry
blown up summed open books and can be obtained from $M_{(\pi,\varphi,N)}^i$ by
blowing down $\p M_{(\pi,\varphi,N)}^i \cap \iI_{(\pi,\varphi)}.$

\begin{defn}
\label{defn:GirouxForm}
Given a blown up summed open book $(\pi,\varphi,N)$ on a manifold 
$M_{(\pi,\varphi,N)}$ with boundary, a \emph{Giroux form} for
$(\pi,\varphi,N)$ is a contact form $\lambda$ on $M_{(\pi,\varphi,N)}$
with Reeb vector field $X_\lambda$ satisfying the following conditions:
\begin{enumerate}
\item $X_\lambda$ is positively transverse to the interiors of the pages,
\item $X_\lambda$ is positively tangent to the boundaries of the closures
of the pages,
\item $\ker \lambda$ on each interface or boundary torus 
$T \subset \iI_{(\pi,\varphi)} \cup \p M_{(\pi,\varphi,N)}$ 
induces a characteristic foliation with closed leaves homologous to the 
meridian~$m_T$.
\end{enumerate}
\end{defn}

We will say that a contact structure on $M_{(\pi,\varphi,N)}$
is \emph{supported} by $(\pi,\varphi,N)$ whenever it is the kernel of a
Giroux form.
By the procedure described above, one can easily take a Giroux form for
the underlying open book $\pi : M\setminus B \to S^1$ and modify it near~$B$
to produce a Giroux form for the blown up summed open book on
$M_{(\pi,\varphi,N)}$.  Moreover, the same argument that proves uniqueness
of contact structures supported by open books
(cf.~\cite{Etnyre:lectures}*{Prop.~3.18}) shows that
any two Giroux forms are homotopic to each other through a family of Giroux 
forms.  We thus obtain the following uniqueness result for supported contact
structures.

\begin{prop}
\label{prop:GirouxUniqueness}
Suppose $M_{(\pi,\varphi,N)}$ is a compact $3$-manifold with boundary, with
a contact structure $\xi_{(\pi,\varphi,N)}$ supported by the
blown up summed open book $(\pi,\varphi,N)$, and
$(M_{(\pi,\varphi,N)},\xi_{(\pi,\varphi,N)})$ admits a contact embedding into
some closed contact $3$-manifold $(M',\xi')$.  If $\lambda$ is a
contact form on $M'$ such that
\begin{enumerate}
\item $\lambda$ defines a Giroux form on $M_{(\pi,\varphi,N)} \subset M'$, and
\item $\ker\lambda = \xi'$ on $M' \setminus M_{(\pi,\varphi,N)}$,
\end{enumerate}
then $\ker\lambda$ is isotopic to~$\xi'$.
\end{prop}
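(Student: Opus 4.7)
The plan is to reduce the statement to Gray's stability theorem by producing a smooth one-parameter family of contact forms on the closed manifold $M'$ that connects a reference form for $\xi'$ to $\lambda$ itself, keeping the kernel a bona fide contact structure throughout.

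First I would construct a reference contact form $\lambda_0$ on all of $M'$ with $\ker\lambda_0 = \xi'$, whose restriction to $M_{(\pi,\varphi,N)}$ is a Giroux form for $(\pi,\varphi,N)$. Start with any contact form $\mu$ on $M'$ defining $\xi'$, and any Giroux form $\alpha$ on $M_{(\pi,\varphi,N)}$ representing $\xi_{(\pi,\varphi,N)}$, which exists by hypothesis. On $M_{(\pi,\varphi,N)}$ we have $\alpha = h\,\mu$ for some smooth positive function $h$; extend $h$ to a positive smooth function $\tilde{h}$ on $M'$ that equals $h$ on $M_{(\pi,\varphi,N)}$ and equals $1$ outside a small collar of $M_{(\pi,\varphi,N)}$, and set $\lambda_0 := \tilde{h}\mu$. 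Then $\lambda_0$ is a global contact form on $M'$ with $\ker\lambda_0 = \xi'$, and its restriction to $M_{(\pi,\varphi,N)}$ is exactly the Giroux form $\alpha$.

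Next I would produce a smooth family of global contact forms $\lambda_t$ on $M'$ interpolating between $\lambda_0$ and $\lambda_1 = \lambda$. On $M_{(\pi,\varphi,N)}$, both $\lambda_0|_{M_{(\pi,\varphi,N)}}$ and $\lambda|_{M_{(\pi,\varphi,N)}}$ are Giroux forms for the same blown up summed open book $(\pi,\varphi,N)$, so by the homotopy statement for Giroux forms recalled immediately before the proposition, they are connected by a smooth path $\alpha_t$ of Giroux forms. Using the explicit normal form $d\theta + g(\rho)\,d\phi$ near each blown-up boundary torus $T\subset \p M_{(\pi,\varphi,N)}$ and the analogous normal form near the interface $\iI_{(\pi,\varphi)}$ (both described in \S\ref{subsec:blowup}), I would further arrange that $\alpha_t$ agrees with $\lambda_0$ in a collar neighborhood of $\p M_{(\pi,\varphi,N)}\cup\iI_{(\pi,\varphi)}$ for every $t$. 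Declaring $\lambda_t := \alpha_t$ on $M_{(\pi,\varphi,N)}$ and $\lambda_t := \lambda_0$ elsewhere then yields a smooth one-parameter family of contact forms on the closed manifold $M'$, and Gray's stability theorem produces an ambient isotopy $\phi_t : M' \to M'$ with $\phi_t^* \ker\lambda_t = \ker\lambda_0 = \xi'$. Setting $t = 1$ gives $\phi_1^*\ker\lambda = \xi'$, proving the proposition.

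The main obstacle is the collar matching step: one must show that the chosen path $\alpha_t$ of Giroux forms on $M_{(\pi,\varphi,N)}$ can be arranged to coincide with $\lambda_0$ (up to rescaling) in a neighborhood of $\p M_{(\pi,\varphi,N)} \cup \iI_{(\pi,\varphi)}$ throughout the homotopy. This amounts to a relative version of the Giroux-form homotopy result, and should follow by combining the standard contact-neighborhood-theorem normal forms near the blown-up binding and interface tori with a cutoff argument; the remaining body of the argument is then routine, reducing to Gray's theorem on a closed manifold.
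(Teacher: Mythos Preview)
Your overall strategy---build a reference form $\lambda_0$ for $\xi'$ that is a Giroux form on the subdomain, connect it to $\lambda$ by a path of contact forms, and apply Gray's theorem---is exactly what the paper has in mind, and it is the right one.

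There is, however, a genuine gap in your gluing step. You want to take an abstract homotopy $\alpha_t$ of Giroux forms on $M_{(\pi,\varphi,N)}$ with $\alpha_0 = \lambda_0$ and $\alpha_1 = \lambda$, arrange $\alpha_t \equiv \lambda_0$ on a collar of $\p M_{(\pi,\varphi,N)}$ for all~$t$, and then set $\lambda_t := \lambda_0$ outside. But that collar condition is inconsistent with the endpoint $\alpha_1 = \lambda$: nothing in the hypotheses forces $\lambda$ to equal $\lambda_0$ (or even to have the same kernel as $\lambda_0$) on a collar of $\p M_{(\pi,\varphi,N)}$ \emph{from the inside}; you only know $\ker\lambda = \xi'$ on the boundary itself and outward. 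So as written, either your family $\lambda_t$ fails to be continuous across $\p M_{(\pi,\varphi,N)}$ at $t=1$, or $\alpha_1 \ne \lambda$ and you have not reached the target. The ``normal form plus cutoff'' remark does not rescue this, because the obstruction is at the level of the endpoint, not the interpolation.

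The fix is much simpler than what you attempted and is precisely the content of the reference the paper cites (Etnyre, Prop.~3.18): take the \emph{global} linear interpolation $\lambda_t := (1-t)\lambda_0 + t\lambda$ on all of $M'$. Outside $M_{(\pi,\varphi,N)}$ you have $\lambda = f\lambda_0$ for some $f > 0$, so $\lambda_t = \bigl((1-t)+tf\bigr)\lambda_0$ is a positive multiple of $\lambda_0$ and hence a contact form for $\xi'$. Inside $M_{(\pi,\varphi,N)}$, both endpoints are Giroux forms, and the convexity argument behind Etnyre's Prop.~3.18 (which the paper asserts carries over verbatim to the blown up summed setting) shows that $\lambda_t$ is again a Giroux form, hence contact. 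No collar matching is needed because the same formula is used on both sides. Gray's theorem then finishes the proof.
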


\subsubsection{Partially planar domains}
\label{subsec:torsion}

We are now ready to state one of the most important definitions in this
paper.

\begin{defn}
\label{defn:partiallyPlanar}
A blown up summed open book on a compact manifold~$M$ is called 
\emph{partially planar} if $M \setminus \p M$ contains a planar page.
A \emph{partially planar domain} is then any contact $3$-manifold $(M,\xi)$
with a supporting blown up summed open book that is partially planar.
An irreducible subdomain
$$
M^P \subset M
$$
that contains planar pages and doesn't touch $\p M$ is called a
\emph{planar piece}, and we will refer to the complementary subdomain
$\overline{M \setminus M^P}$ as the \emph{padding}.
\end{defn}

By this definition, every planar contact manifold is a partially
planar domain (with empty padding), 
as is the symmetric summed open book obtained by summing
together two planar open books with the same number of binding components
(here one can call either side the planar piece, and the other one the padding).  
As we'll soon see,
one can also use partially planar domains to characterize the solid torus that
appears in a Lutz twist, or the thickened torus in the
definition of Giroux torsion, as well as many more general objects.

Definition~\ref{defn:partiallyPlanar} 
generalizes the notion of a \emph{partially planar contact
manifold}, which appeared in \cite{AlbersBramhamWendl}.  There it was
shown that not every contact $3$-manifold is partially planar, because
those which are can never occur as nonseparating contact hypersurfaces in
closed symplectic manifolds.  It follows easily that such
a contact manifold also cannot admit any strong symplectic semifillings with
disconnected boundary, as one could then produce a nonseparating 
contact hypersurface by attaching a symplectic $1$-handle and capping off
the result.
By a generalization of the same arguments, we will prove the following
in \S\ref{sec:nonfillable}.

\begin{thm}
\label{thm:nonseparating}
Suppose $(M,\xi)$ is a closed contact $3$-manifold that contains a 
partially planar domain and admits
a contact type embedding $\iota : (M,\xi) \hookrightarrow (W,\omega)$
into some closed symplectic $4$-manifold $(W,\omega)$.
Then $\iota$ separates~$W$.
\end{thm}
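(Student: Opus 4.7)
The strategy generalizes the argument of \cite{AlbersBramhamWendl} for partially planar contact manifolds, using the technical existence and uniqueness theorem of \S\ref{subsec:bigTheorem} as a crucial input. Assume for contradiction that $\iota : (M,\xi) \hookrightarrow (W,\omega)$ is a contact type embedding that does \emph{not} separate~$W$. I will produce a foliation of an open subset of a completed cobordism by planar holomorphic curves with only positive punctures, and show that it cannot coexist with a nonempty concave end.

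\emph{Cutting and completion.} Cut $W$ open along $\iota(M)$ to obtain a compact symplectic cobordism $(W_0,\omega_0)$ from $(M,\xi)$ to itself, with concave boundary $M_-$ and convex boundary $M_+$. The non-separation hypothesis is equivalent to $W_0$ being connected. Attach standard positive and negative symplectization ends to form a completion $(\widehat{W}_0, \widehat{\omega}_0)$. Fix a Giroux form $\lambda$ on $M$ adapted to the blown up summed open book underlying the partially planar domain, and an $\mathbb{R}$-invariant almost complex structure $J_0$ on $\mathbb{R}\times M$ for which the main technical theorem of \S\ref{subsec:bigTheorem} applies: the pages of the planar piece $M^P \subset M$ lift to an $\mathbb{R}\times S^1$-family of embedded finite-energy $J_0$-holomorphic punctured spheres foliating $\mathbb{R} \times M^P$, with only positive ends at the binding Reeb orbits of $M^P$, and this family exhausts all finite-energy curves with those asymptotics. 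Extend $J_0$ to an $\widehat{\omega}_0$-tame almost complex structure $J$ on $\widehat{W}_0$, equal to $J_0$ on both cylindrical ends.

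\emph{Moduli space.} Let $\mathcal{M}$ be the moduli space of $J$-holomorphic punctured spheres in $\widehat{W}_0$ with only positive punctures, asymptotic to the binding Reeb orbits of the copy $M_+^P$ of $M^P$ in the $+\infty$ end, and in the same relative homology class as a page. For $T \gg 0$, translating a canonical planar page by $T$ up the positive cylindrical end defines $u_T \in \mathcal{M}$; let $\mathcal{M}_0$ be the connected component containing these. Automatic transversality for index-sharp planar curves (as in \cite{Wendl:openbook}) makes $\mathcal{M}_0$ a smooth manifold of the expected dimension, whose evaluation map is an embedding onto an open, foliated subset of $\widehat{W}_0$.

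\emph{Compactness and contradiction.} Let $u_n \in \mathcal{M}_0$ be any sequence moving into the interior of $\widehat{W}_0$. By SFT compactness it subconverges to a holomorphic building $\bar u$, whose total positive ends still match those of a page and whose total negative ends are empty. The uniqueness theorem of \S\ref{subsec:bigTheorem} applied at $M_+$ forces every level of $\bar u$ in the positive cylindrical end to be a translate of a canonical page; applied at $M_-$, it forces every level in the negative end to be a canonical page of $M_-^P$ — but those pages have \emph{positive} punctures, which cannot be matched as negative punctures of any level above, because the whole building has no free negative ends. A careful bookkeeping of positive versus negative ends across the levels then rules out every nontrivial breaking, so $\mathcal{M}_0$ is compact modulo $\mathbb{R}$-translation at $+\infty$ and its image in $\widehat{W}_0$ is both open and closed, hence all of the connected manifold $\widehat{W}_0$. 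This is absurd: any point sufficiently deep in the negative cylindrical end would then lie on an embedded $J$-holomorphic sphere with no negative punctures, which is incompatible with the strict monotonicity of the $\mathbb{R}$-coordinate along any such curve's asymptotics. Therefore $\iota$ must separate~$W$.

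\emph{Main obstacle.} The crux is the compactness analysis: one has to combine SFT compactness with the full force of the uniqueness statement of \S\ref{subsec:bigTheorem} at \emph{both} ends, and use the no-negative-end condition to kill every possible broken building. Once this rigidity is in hand, the remaining step — promoting ``open $+$ closed $=$ everything'' to a contradiction with the concave end — is soft.
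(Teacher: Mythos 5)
Your overall strategy—cut $W$ open along $M$, run the family of planar holomorphic pages out of the positive end, and argue that the foliated region is open and closed—is the right one, but the way you set up the cobordism breaks the argument at two points, both concerning the concave end $M_-$. First, the compactness step: once curves are allowed to travel toward the negative cylindrical end, SFT compactness produces buildings with \emph{lower} levels in $\RR\times M_-$, and the uniqueness theorem of \S\ref{subsec:bigTheorem} does not apply to them. That theorem only classifies curves \emph{subordinate to $\pi_0$}, i.e.\ whose positive ends approach the distinguished small-period orbits of the blown up open book with controlled multiplicity; a lower level's positive ends must match the negative asymptotics of the level above it, and those can be arbitrary Reeb orbits of $M_-$. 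Your ``bookkeeping of positive versus negative ends'' also does not kill lower levels: a building with no free negative ends can perfectly well have a lowest level consisting of curves in $\RR\times M_-$ with only positive punctures (planes, for instance), all of which get matched upward. Second, your final contradiction is not a contradiction. There is no ``strict monotonicity of the $\RR$-coordinate'' preventing a finite-energy curve with only positive punctures from passing through a point arbitrarily deep in a \emph{concave} end: the $\RR$-coordinate of a holomorphic curve in a symplectization piece is subharmonic, which forbids interior maxima but says nothing about interior minima. (Contrast this with the semifilling corollary, where the second boundary component is \emph{convex} and the maximum principle genuinely applies.) Keeping curves away from a concave end requires real input—in the proof of Theorem~\ref{thm:complement} this is done by ``walling off'' the negative end with non-generic holomorphic pages of an open book and invoking positivity of intersections—and no such wall is available in your setup.

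The paper sidesteps both problems with a different global construction: instead of completing the single cobordism $W_0$ at both ends, it glues together an \emph{infinite chain} of copies of $W_0$ (possible precisely because the hypersurface is nonseparating, so the cobordism goes from $(M,\xi)$ to itself), producing a noncompact but geometrically bounded symplectic manifold $\wW$ with a single \emph{convex} boundary component, to which one cylindrical end is attached. The monotonicity lemma then confines all curves of $\mM_0(J)$ to a compact subset of $\wW$, so Proposition~\ref{prop:compactness} applies with no lower levels, Proposition~\ref{prop:IFT} gives openness, and the contradiction is that the open-and-closed filled region would force the noncompact manifold $\wW$ to be compact. If you want to salvage your version, you would have to either import the infinite-chain trick or supply a genuine mechanism barring the curves from the concave end; as written, the proof has a gap exactly where you locate the ``main obstacle.''
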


\begin{cor}
\label{cor:semifillings}
If $(M,\xi)$ is a closed contact $3$-manifold containing a partially planar
domain, then it does not admit any strong symplectic semifilling with
disconnected boundary.
\end{cor}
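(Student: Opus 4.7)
The plan is to argue by contradiction via Theorem~\ref{thm:nonseparating}: given a strong symplectic semifilling $(W,\omega)$ of $(M,\xi)$ whose boundary is disconnected, I will construct a closed symplectic $4$-manifold into which $(M,\xi)$ embeds as a contact-type hypersurface that fails to separate. Since the resulting hypersurface still contains the same partially planar domain, this will contradict Theorem~\ref{thm:nonseparating}.

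Write $\partial W = M \sqcup N$ with $N$ a nonempty union of convex boundary components. The construction proceeds in two well-known steps. First, pick a point $p \in M$, another point $q$ in some connected component $N_1 \subset N$, and attach an Eliashberg--Weinstein symplectic $1$-handle $H$ across Darboux neighborhoods of $p$ and $q$; the resulting manifold $(W',\omega')$ is again a strong symplectic cobordism, now with boundary $(M \# N_1) \sqcup (N \setminus N_1)$, having one fewer component than $\partial W$. Second, using the existence theorem for symplectic caps of closed contact $3$-manifolds (Etnyre--Honda, Gay), attach a symplectic cap to each remaining boundary component of $W'$ to produce a closed symplectic $4$-manifold $(X,\omega_X) \supset (W',\omega')$.

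The key claim is that a parallel copy $M_\parallel$ of $M$, pushed slightly inward through the symplectization collar of the convex boundary $M \subset \partial W$, survives as an embedded contact-type hypersurface in $X$ contactomorphic to $(M,\xi)$, and fails to separate~$X$. The contact-type property is automatic because $M_\parallel$ sits in the region where the Liouville vector field near $M$ is still defined. For the nonseparating property: inside $W$ alone, $M_\parallel$ divides $W$ into the thin collar $C \cong M \times [0,1]$ bounded by $M$ and $M_\parallel$, and the complementary region $W^\circ$ whose boundary contains $M_\parallel$ and all of $N$. The $1$-handle $H$ is attached with one foot in $B_p \subset M \subset \partial C$ on the collar side of $M_\parallel$ and the other foot in $B_q \subset N_1 \subset \partial W^\circ$ on the opposite side, so $H$ joins $C$ and $W^\circ$ externally across $M_\parallel$. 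Hence $W' \setminus M_\parallel$ is connected, and because the symplectic caps are glued along $\partial W' \subset W' \setminus M_\parallel$, the closed manifold $X \setminus M_\parallel$ is connected as well.

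I do not expect any serious obstacle, since every ingredient is a standard operation in $4$-dimensional symplectic topology: the Weinstein $1$-handle attachment is unobstructed (the attaching sphere $S^0$ is automatically isotropic) and preserves strong convexity of the boundary, while symplectic caps exist for every closed contact $3$-manifold. The only point that requires a moment of care is the topological bookkeeping ensuring that $M_\parallel$ genuinely fails to separate after the handle attachment, which is precisely what forces the two feet of $H$ to lie in \emph{different} components of~$\partial W$ --- and this is possible exactly because $N$ was assumed nonempty.
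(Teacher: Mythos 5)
Your proof is correct, and it is exactly the reduction the paper itself sketches in \S\ref{subsec:torsion} just before stating the corollary (``one could then produce a nonseparating contact hypersurface by attaching a symplectic $1$-handle and capping off the result''). However, the proof the paper actually writes down in \S\ref{sec:nonfillable} takes a different, self-contained route: it notes that Corollary~\ref{cor:semifillings} can be proved \emph{independently} of Theorem~\ref{thm:nonseparating} by completing the semifilling $(W,\omega)$ with cylindrical ends and running the holomorphic curve argument directly --- the curves of $\mM_0(J)$ emerging from the planar piece in the end at $M$ must foliate $W^\infty$ away from a codimension~$2$ subset, yet convexity forbids them from entering the cylindrical end attached at the other boundary component $M'$, a contradiction. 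The trade-off is clear: your argument is purely topological once Theorem~\ref{thm:nonseparating} is granted, but it imports the existence of symplectic caps for arbitrary closed contact $3$-manifolds (the standard references are Eliashberg's and Etnyre's 2004 papers on capping weak fillings, rather than Etnyre--Honda), whereas the paper's argument needs no caps and no handle attachment, only the convexity of the extra boundary components. Your topological bookkeeping --- that the two feet of the $1$-handle lie on opposite sides of the pushed-in copy $M_\parallel$, so that $M_\parallel$ fails to separate after gluing the handle and the caps --- is the right point to check and is handled correctly.
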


This corollary generalizes similar results proved by McDuff for the tight
$3$-sphere \cite{McDuff:boundaries} and Etnyre for all planar contact 
manifolds \cite{Etnyre:planar}.

\begin{remark}
\label{remark:Umap}
Corollary~\ref{cor:semifillings} also follows from an algebraic fact that
is a weaker version of the vanishing results 
(Theorems~\ref{thm:ECH} and~\ref{thm:twisted}) stated below: namely if
$(M,\xi)$ contains a partially planar domain, then its ECH contact
invariant is in the image of the so-called $U$-map.\footnote{For the
special case where $(M,\xi)$ is planar, the corresponding result in
Heegaard Floer homology has been proved by Ozsv\'ath, Szab\'o and
Stipsicz \cite{OzsvathSzaboStipsicz:planar}.}  
The latter is an endomorphism
$$
U : \ECH_*(M,\lambda,J) \to \ECH_{*-2}(M,\lambda,J)
$$
defined by
counting embedded index~$2$ holomorphic curves through a generic point,
and the same arguments that we will use to prove the vanishing results
show that if this point is chosen inside the planar piece, then
there is an admissible orbit set $\boldsymbol{\gamma}$ such that
$U[\boldsymbol{\gamma}] = [\boldsymbol{\emptyset}]$.  One can then
use recent results about maps on ECH induced by cobordisms
(cf.~\cite{HutchingsTaubes:Arnold2}) to show that any contact manifold
satisfying this condition also satisfies Corollarly~\ref{cor:semifillings}.
We omit the details of this argument since our proof will not depend on it.
\end{remark}

We now come to the definition of a new symplectic filling obstruction.

\begin{defn}
\label{defn:planarTorsionDomain}
For any integer $k \ge 0$, a contact manifold $(M,\xi)$, possibly
with boundary, is called a
\emph{planar torsion domain of order~$k$} (or briefly a
\emph{planar $k$-torsion domain}) if it is supported by a
partially planar blown up summed open book $(\pi,\varphi,N)$
with a planar piece $M^P \subset M$ satisfying the following conditions:
\begin{enumerate}
\item The pages in~$M^P$ have $k+1$ boundary components.
\item The padding $\overline{M \setminus M^P}$ is not empty.
\item $(\pi,\varphi,N)$ is not a symmetric summed open book
(cf.~Example~\ref{ex:symmetric}).
\end{enumerate}
\end{defn}
\begin{defn}
\label{defn:planarTorsion}
A contact $3$-manifold is said to have \emph{planar torsion of order~$k$}
(or \emph{planar $k$-torsion}) if it admits a contact embedding of a
planar $k$-torsion domain.
\end{defn}

\begin{figure}
\begin{center}
\includegraphics{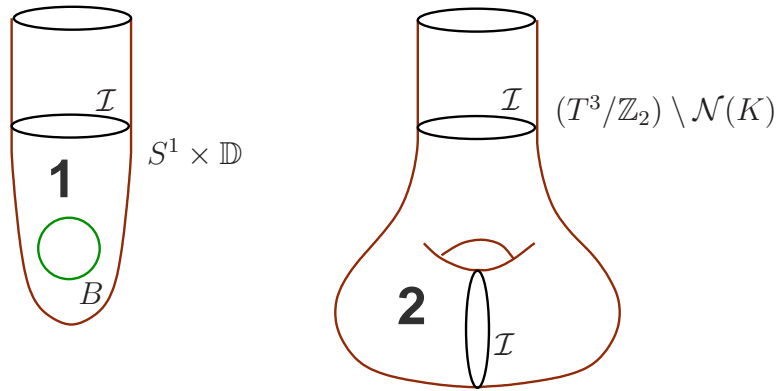}
\caption{\label{fig:moreTorsion} Schematic representations of two planar
torsion domains as described in Example~\ref{ex:schematic}.}
\end{center}
\end{figure}

\begin{example}
\label{ex:schematic}
Some simple examples of planar torsion domains of the form $S^1 \times \Sigma$
with $S^1$-invariant contact structures are shown in 
Figures~\ref{fig:torsionDomains} and~\ref{fig:noGirouxTorsion}.
More generally, blown up summed open books can always be represented 
by schematic
pictures as in Figure~\ref{fig:moreTorsion}, which shows two examples of
planar torsion domains, each with the order labeled within the planar piece.
Here each picture shows a surface $\Sigma$ containing
a multicurve~$\Gamma$: each connected component $\Sigma_0 \subset
\Sigma \setminus \Gamma$ then represents an irreducible subdomain with
pages diffeomorphic to~$\Sigma_0$, and the components of~$\Gamma$
represent interface tori (labeled in the picture by~$\iI$).  Each irreducible
subdomain may additionally have binding circles, shown in the picture as
circles with the label~$B$.  The information in these pictures, together
with a specified monodromy map for each component of $\Sigma\setminus \Gamma$,
determine a blown up summed open book uniquely up to contactomorphism.
If we take these particular pictures with the assumption that all monodromy
maps are trivial, then the first shows a solid torus $S^1 \times \DD$
with an overtwisted contact structure that 
makes one full twist along a ray from the center 
(the binding~$B$) to the boundary.  The other picture shows the
complement of a solid torus in the torus bundle $T^3 / \ZZ_2$ from
Example~\ref{ex:selfSum}.  More precisely, one can construct it by taking
a loop $K \subset T^3 / \ZZ_2$ transverse to the pages in that example,
modifying the contact structure~$\xi$ near~$K$ by a full Lutz twist, 
and then removing
a smaller neighborhood $\nN(K)$ of~$K$ on which~$\xi$ makes a quarter twist.
Note that the appearance of genus in this picture is a bit misleading; due to
the interface torus in the interior of the bottom piece, it has planar pages
with three boundary components.
\end{example}

The next two theorems, which we will prove in \S\ref{sec:nonfillable} and
\S\ref{sec:ECH} respectively, are the most important results of this paper.
The first follows from the second, due to the combination of
Theorem~\ref{thm:nonseparating} with Taubes' isomorphism of Embedded
Contact Homology to Seiberg-Witten Floer homology \cite{Taubes:ECH=SWF5},
but we will give a separate proof using only $J$-holomorphic curves and thus
avoiding the substantial overhead of Seiberg-Witten theory.

\begin{thm}
\label{thm:nonfillable}
If $(M,\xi)$ is a closed contact $3$-manifold with planar torsion of any
order, then it does not admit a contact type embedding into
any closed symplectic $4$-manifold.  In particular, it is not strongly
fillable.
\end{thm}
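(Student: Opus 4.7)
My plan is to argue by contradiction, producing a distinguished family of $J$-holomorphic curves in a hypothetical closed symplectic manifold containing $(M,\xi)$ and then exploiting the uniqueness of the planar pages that live in the symplectization.

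Suppose $(M,\xi)$ contains a planar $k$-torsion domain $M_0$ and admits a contact type embedding into a closed symplectic $4$-manifold $(W,\omega)$. By Theorem~\ref{thm:nonseparating}, the embedding separates $W$, so one side $W^+$ is a strong symplectic filling of $(M,\xi)$; the ``in particular'' clause about strong fillings then follows formally via symplectic capping, reducing it to the main assertion. Let $\widehat{W}^+$ denote the completion of $W^+$ by attaching the positive cylindrical end $[0,\infty)\times M$. Choose a Giroux form $\lambda$ for the blown up summed open book underlying $M_0$, extend it arbitrarily to $M$, and invoke the main technical result of~\S\ref{subsec:bigTheorem} to obtain, after a small Morse--Bott perturbation, an adapted almost complex structure $J_0$ on $\RR\times M$ together with a distinguished $S^1$-family of embedded $J_0$-holomorphic pages $\{u_s\}_{s\in S^1}$ projecting onto the pages of the planar piece $M_0^P$. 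Each $u_s$ is asymptotic at $+\infty$ to a fixed orbit set $\boldsymbol{\gamma}$ supported at the binding and interface orbits of $M_0^P$, and the uniqueness part of that theorem guarantees that $\{u_s\}$, modulo $\RR$-translation, exhausts all finite-energy $J_0$-holomorphic curves in $\RR\times M$ asymptotic to $\boldsymbol{\gamma}$ with no negative ends. Extend $J_0$ to an $\omega$-compatible $J$ on all of $\widehat{W}^+$.

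I would then analyze the moduli space $\mathcal{M} := \mathcal{M}^J(\widehat{W}^+;\boldsymbol{\gamma})$ of $J$-holomorphic curves in $\widehat{W}^+$ asymptotic to $\boldsymbol{\gamma}$. The Fredholm index count for a planar curve with these asymptotics, combined with automatic transversality for embedded punctured spheres of genus zero, shows that $\mathcal{M}$ is a smooth manifold of the expected dimension near the family inherited from $\{u_s\}$; evaluation at a marked point produces a map $\mathcal{M}\to\widehat{W}^+$ whose image, inside the cylindrical end, contains $[T,\infty)\times M_0^P$ for $T$ large. Constraining the marked point to trace out a path that starts deep inside $M_0^P$ and moves into the padding $\overline{M\setminus M_0^P}$ produces a continuous one-parameter family of curves in $\mathcal{M}$; SFT compactness forces this family to extend to a compact $1$-manifold whose boundary consists of holomorphic buildings. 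The uniqueness statement forbids every candidate breaking: any non-trivial lower level in $\RR\times M$ would have to be a $J_0$-holomorphic curve with asymptotics a sub-orbit-set of $\boldsymbol{\gamma}$, and no such curves exist beyond the pages themselves, which cannot reassemble into a legitimate boundary stratum of this family.

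The contradiction is that the one-parameter family has no valid boundary: on the planar side the pages genuinely extend, while on the padding side the uniqueness theorem provides no matching curves, precisely because the summed open book is \emph{not} symmetric (Definition~\ref{defn:planarTorsionDomain}). The main obstacle is controlling the SFT compactification of $\mathcal{M}$---one needs the full strength of the uniqueness theorem of \S\ref{subsec:bigTheorem} to rule out spurious lower levels arising from multiple covers or reducible configurations, and one must verify that the Fredholm index and transversality arguments survive the Morse--Bott perturbation. The interplay between the genus-zero hypothesis on the pages (which underpins the index count and automatic transversality) and the nonempty, asymmetric padding (which supplies the geometric obstruction) is the essential content; the argument fails exactly in the symmetric case, which indeed can produce genuine fillings.
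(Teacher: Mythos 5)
Your overall strategy is the paper's: reduce to a strong filling via Theorem~\ref{thm:nonseparating} (resp.\ capping for the ``in particular'' clause), complete the filling, lift the planar pages of $M_0^P$ to a family of embedded $J$-holomorphic curves in the cylindrical end, propagate them through $W^\infty$ by the implicit function theorem and SFT compactness, and derive a contradiction from the uniqueness statement of Theorem~\ref{thm:openbook} together with the non-symmetry of the summed open book. The technical inputs you invoke (automatic transversality for the embedded genus-zero curves, control of breaking via the classification of curves subordinate to $\pi_0$) are exactly Propositions~\ref{prop:IFT} and~\ref{prop:compactness}.

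However, the final contradiction is located in the wrong place, and as stated that step would fail. You claim that when the marked point moves into the padding ``the uniqueness theorem provides no matching curves,'' so the one-parameter family has no valid boundary. This is false at any finite height in the cylindrical end: the uniqueness theorem only classifies curves contained in $\RR\times M$, whereas a curve in $W^\infty$ through a point $(s,q)$ with $q$ in the padding and $s$ finite is free to dip down into the filling, and the open-and-closed argument shows that such curves \emph{do} exist through every point outside a codimension-$2$ subset. The family does not terminate when the marked point leaves $M_0^P$; it merely ceases to consist of translated pages. The actual contradiction is obtained by fixing a padding point $q$ and letting the height go to infinity: the curves through $(k,q)$ with $k\to\infty$ converge to a building whose relevant component lies in an \emph{upper} level (not a lower level --- a completed filling has no negative end), hence is a $J_+$-holomorphic curve in $\RR\times M$ passing over $q$ and subordinate to $\pi_0$. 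Uniqueness then forces it to be one of the pages $S^{(i)}_{\sigma,\tau}$, so the padding must consist of a single $S^1$-family of pages diffeomorphic to the planar ones and asymptotic to the same orbits, i.e.\ the summed open book is symmetric, contradicting Definition~\ref{defn:planarTorsionDomain}. Note in particular that non-symmetry does not by itself exclude curves over the padding; it only makes the conclusion forced by uniqueness (that the padding is a mirror image of the planar piece) impossible.
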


\begin{thm}
\label{thm:ECH}
If $(M,\xi)$ is a closed contact $3$-manifold with planar torsion of any
order, then its ECH contact invariant vanishes.
\end{thm}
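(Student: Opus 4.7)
My plan is to find an admissible orbit set $\boldsymbol{\gamma}$ in the ECH chain complex of $(M,\xi)$ such that the differential satisfies $\partial\boldsymbol{\gamma} = \boldsymbol{\emptyset}$, which by definition forces $[\boldsymbol{\emptyset}] = 0$. The orbit set will be built from the boundary orbits of the pages of the planar piece $M^P$ of a planar torsion domain embedded in $(M,\xi)$, and the holomorphic curve realizing the relation will be obtained by lifting a planar page to a $J$-holomorphic curve in the symplectization and then perturbing.

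First, I would choose a Giroux form $\lambda$ for the blown up summed open book on the planar torsion domain, extended in some arbitrary way to a contact form for $\xi$ on all of~$M$. Each boundary or interface torus bordering a planar page of~$M^P$ carries a Morse-Bott family of Reeb orbits in the class of the meridian~$m_T$. Performing the standard Morse-Bott perturbation, each such torus degenerates to a pair of closed orbits, one elliptic $\gamma^e_T$ and one hyperbolic $\gamma^h_T$, each in class~$m_T$. I would take $\boldsymbol{\gamma}$ to consist of the elliptic orbits $\gamma^e_T$ for each of the $k+1$ boundary components of a page of $M^P$ (each with multiplicity one), and would work with a compatible almost complex structure $J$ on $\mathbb{R}\times M$ of the type constructed in Section~\ref{sec:openbook}, for which the planar pages of $M^P$ lift to an $S^1$-family of embedded $J$-holomorphic curves in $\mathbb{R}\times M$ with positive ends asymptotic to these Morse-Bott families.

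The next step is to invoke the existence and uniqueness theorem of Section~\ref{sec:openbook} in the unperturbed (Morse-Bott) picture to argue that the only finite-energy punctured $J$-holomorphic curves in $\mathbb{R}\times M$ with positive ends at the Morse-Bott orbits bordering the planar piece are exactly the lifts of the pages of~$M^P$. A standard Morse-Bott-to-nondegenerate perturbation then converts the $S^1$-family of pages into exactly one rigid, embedded, somewhere injective $J$-holomorphic plane (or rather a $(k+1)$-punctured sphere) with positive ends at $\boldsymbol{\gamma}$ and no negative ends. An ECH index computation in the style of \cite{Wendl:fillable} shows that this curve has ECH index~1 and is admissible for the ECH differential, and its contribution is $\pm 1$.

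The real obstacle is uniqueness: I must rule out the existence of any other embedded index-1 curve from $\boldsymbol{\gamma}$ to $\boldsymbol{\emptyset}$ that could cancel this contribution, and also rule out contributions to $\partial\boldsymbol{\gamma}$ from nontrivial orbit sets other than $\boldsymbol{\emptyset}$ if I want the naive count to suffice. The key tool here is intersection theory for punctured holomorphic curves: any other competing curve $u'$ would either have to live entirely inside $M^P$ (in which case the uniqueness statement from Section~\ref{sec:openbook} identifies it with a perturbed page) or cross the interface or boundary tori of $M^P$. In the latter case, the positive asymptotics at $\boldsymbol{\gamma}$ force $u'$ to intersect every page of the planar piece, but positivity of intersections combined with the asymptotic winding estimates pin these intersections down so tightly that $u'$ cannot extend outside $M^P$ without violating either the contact boundary condition on $\partial M^P$ or the bound on total intersection number with the family of pages. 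The padding contributes no additional curves because the pre-Lagrangian tori bordering $M^P$ carry Reeb orbits which block any curve asymptotic to $\boldsymbol{\gamma}$ from reaching the padding. Finally, the exclusion of the symmetric case in Definition~\ref{defn:planarTorsionDomain} is precisely what guarantees that the only curves appearing in the count lie on one side of the interface, so that the differential genuinely equals $\boldsymbol{\emptyset}$ rather than some other orbit set on the other side.
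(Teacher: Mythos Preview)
Your overall strategy matches the paper's, but there is a genuine gap in the choice of orbit set and in the accounting for the differential.

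You propose to take $\boldsymbol{\gamma}$ to consist of the \emph{elliptic} orbit $\gamma_T^e$ at every boundary component of the planar page. With that choice the perturbed page curve has Fredholm index~$2$, not~$1$: in the natural trivialization each elliptic orbit has $\muCZ^\Phi = 1$, each binding orbit also has $\muCZ^\Phi = 1$, and the normal Chern number vanishes, so the index equals $\chi(\Sigma) = 2$ exactly as in Prop.~\ref{prop:index}. For this embedded curve with distinct simply covered asymptotics, the ECH index equals the Fredholm index, so your curve contributes to the $U$-map rather than to~$\p$. The paper's fix is to replace exactly one elliptic orbit by the corresponding \emph{hyperbolic} orbit $\gamma_1^h$ (with $\muCZ^\Phi = 0$), which drops the index to~$1$ and picks out a unique rigid curve~$u_0$. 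You also omit the binding orbits $\beta_i$ (which must appear in $\boldsymbol{\gamma}_0$ whenever $B_0 \ne \emptyset$) and the multiplicity-$2$ contributions from interior interface tori.

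The second gap is more serious than your final paragraph suggests. Once $\boldsymbol{\gamma}_0$ contains $\gamma_1^h$ together with $\gamma_j^e$ for $j \ge 2$, the differential $\p\boldsymbol{\gamma}_0$ receives contributions not only from $u_0$ but also, for each $j \ge 2$, from the two gradient-flow cylinders $v_j^\pm$ running from $\gamma_j^e$ down to $\gamma_j^h$ inside the torus $T_j$, taken in disjoint union with trivial cylinders over the remaining orbits. These cylinders are not excluded by the uniqueness part of Theorem~\ref{thm:openbook}, since that statement concerns only curves \emph{subordinate to~$\pi_0$}, i.e.\ curves without negative ends; nor are they excluded by your intersection argument, since they live entirely inside $T_j$ and never cross any interface. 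The paper's point is that for each~$j$ the pair $v_j^+, v_j^-$ enter with opposite signs and satisfy $[v_j^+] - [v_j^-] = [T_j] \in H_2(M)$, so over untwisted coefficients they cancel, leaving $\p\boldsymbol{\gamma}_0 = \boldsymbol{\emptyset}$. This cancellation, not a blocking argument, is what makes the computation work, and it is also precisely why the fully separating hypothesis is needed in the twisted case.
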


This result assumes integer
(non-twisted) coefficients, but as is well known in the world of
Heegaard Floer homology (cf.~\cite{GhigginiHonda:twisted}), finer invariants
can be obtained by allowing twisted, i.e.~group ring coefficients.  In ECH,
this essentially means including among the coefficients terms of the form
$$
e^A \in \ZZ[H_2(M)],
$$
where $A \in H_2(M)$ and by definition $e^A e^B = e^{A+B}$.  The differential
is then ``twisted'' by inserting factors of $e^A$ where $A$ describes
the relative homology class of the holomorphic curves being counted
(see \S\ref{subsec:twisted} for more precise definitions).
With these coefficients, it is no longer generally true that planar torsion
kills the ECH contact invariant, but it becomes true if we insert
an extra topological condition.  

\begin{defn}
\label{defn:separating}
A contact $3$-manifold is said to have 
\emph{fully separating planar $k$-torsion}
if it contains a planar $k$-torsion domain with a planar
piece $M^P \subset M$ that has the following properties:
\begin{enumerate}
\item There are no interface tori in the interior of~$M^P$.
\item Every connected component of $\p M^P$ is nullhomologous in $H_2(M)$.
\end{enumerate}
\end{defn}
Observe that the fully separating condition is always satisfied if $k=0$.
The following will be proved in \S\ref{sec:ECH}.

\begin{thm}
\label{thm:twisted}
If $(M,\xi)$ is a closed contact $3$-manifold with fully separating
planar torsion, then the ECH contact invariant
of $(M,\xi)$ with twisted coefficients in $\ZZ[H_2(M)]$ vanishes.
\end{thm}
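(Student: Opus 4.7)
The plan is to upgrade the proof of Theorem~\ref{thm:ECH} so that the relation $\p \boldsymbol{\gamma} = \boldsymbol{\emptyset}$ in the ECH chain complex lifts to the twisted complex with coefficient $e^0 = 1$ rather than some nontrivial element of $\ZZ[H_2(M)]$. Recall that with twisted coefficients, each embedded rigid $J$-holomorphic curve $u$ contributing to the differential is weighted by $e^{[u]}$, where $[u] \in H_2(M)$ is the absolute class obtained by capping off the positive and negative orbit sets by chosen reference $2$-chains. It therefore suffices to choose reference chains for which the unique curve produced by the existence/uniqueness theorem of~\S\ref{subsec:bigTheorem} has trivial absolute class.

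First I would replicate the setup of Theorem~\ref{thm:ECH}: after a suitable nondegenerate perturbation of a Giroux form adapted to the fully separating planar torsion domain, the blown-up summed open book on the planar piece $M^P$ produces a distinguished admissible orbit set $\boldsymbol{\gamma}$, consisting of one short Reeb orbit near each boundary torus of~$M^P$, together with a unique embedded rigid $J$-holomorphic curve~$u$ in $\RR \times M$ asymptotic to $\boldsymbol{\gamma}$ at~$+\infty$ and having no negative ends. Topologically $u$ is a page of the planar piece, i.e.\ a planar surface with $k+1$ boundary components, each lying on a distinct boundary torus of~$M^P$.

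The fully separating hypothesis is what makes $[u] \in H_2(M)$ computable. Because $M^P$ contains no interface tori in its interior, it is a single irreducible blown-up open book, so $\p M^P$ is a disjoint union of tori $T_1, \ldots, T_{k+1}$, each nullhomologous in~$M$ by assumption. For each~$T_j$ I would fix a $2$-chain $C_j \subset M \setminus \Int M^P$ with $\p C_j = T_j$. Together with the $S^1$-family of meridional disks in the collar neighborhood of each $T_j \subset \p M_{(\pi,\varphi,N)}$ (which caps off the short Reeb orbits parallel to the boundary), these $C_j$ assemble into a reference chain $Z_{\boldsymbol{\gamma}}$ for $\boldsymbol{\gamma}$ supported in $M \setminus \Int M^P$. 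With this choice, the absolute class $[u + Z_{\boldsymbol{\gamma}}]$ is represented by the page capped off by its meridional disks and then by the $C_j$; but by construction this closed surface lies entirely in $M$ and is homologous to $\sum_j \pm [T_j] = 0$. Hence the coefficient in the twisted differential is $e^0 = 1$ and $\p \boldsymbol{\gamma} = \boldsymbol{\emptyset}$ holds in the twisted complex.

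The main obstacle is the homological bookkeeping: one needs to verify both that the reference chain $Z_{\boldsymbol{\gamma}}$ can be arranged to live entirely in the complement of $\Int M^P$ (so that $u$ and $Z_{\boldsymbol{\gamma}}$ meet only along $\p M^P$ in the expected way), and that the Morse-Bott-to-nondegenerate perturbation of the planar piece does not introduce any additional index-one curves with different relative class whose contributions to $\p \boldsymbol{\gamma}$ would spoil the cancellation. The uniqueness half of the theorem in~\S\ref{subsec:bigTheorem} already rules out extra curves in the untwisted setting, and it remains only to check that the perturbed rigid curve represents the same homology class as its Morse-Bott progenitor relative to the chosen reference. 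Granting this, the vanishing $[\boldsymbol{\emptyset}] = 0$ in twisted ECH follows at once.
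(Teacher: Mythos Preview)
Your proposal has a genuine gap: you have correctly sensed that the twisted case requires controlling homology classes, but you have misidentified where the obstruction lies. Making the single page curve $u_0$ contribute with coefficient $e^0$ is not the issue at all---in the paper's setup one simply takes $A_0 := -[u_0] \in H_2(M,\boldsymbol{\rho}_0 - \boldsymbol{\gamma}_0)$, so the term $e^{A_0 + [u_0]}\boldsymbol{\emptyset} = e^0\boldsymbol{\emptyset}$ appears in $\p(e^{A_0}\boldsymbol{\gamma}_0)$ automatically, with no need for any clever choice of reference chains.

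The real obstruction is the one you dismiss in your last paragraph. The uniqueness statement in~\S\ref{subsec:bigTheorem} classifies only curves \emph{subordinate to~$\pi_0$}, i.e.\ curves with no negative ends; it does not say that $u_0$ is the only index~$1$ curve counted by the ECH differential on $\boldsymbol{\gamma}_0$. In fact the Morse-Bott-to-nondegenerate perturbation produces, for every torus $T_j$ with $j \ge 2$, a pair of embedded index~$1$ cylinders $v_j^+,v_j^-$ with positive end at the elliptic orbit $\gamma_j^e$ and negative end at the hyperbolic orbit $\gamma_j^h$ (these are the standard gradient-flow cylinders of~\cite{Bourgeois:thesis}). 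Combined with trivial cylinders over the remaining orbits in~$\boldsymbol{\gamma}_0$, they contribute $\pm e^{A_0 + [v_j^\pm]}\boldsymbol{\gamma}_j$ to $\p(e^{A_0}\boldsymbol{\gamma}_0)$. Since $[v_j^+] - [v_j^-] = [T_j] \in H_2(M)$, the two terms for each~$j$ sum to $e^{A_0 + [v_j^-]}\bigl(e^{[T_j]} - 1\bigr)\boldsymbol{\gamma}_j$. In the untwisted complex this is always zero, which is why Theorem~\ref{thm:ECH} needs no topological hypothesis; in the twisted complex it vanishes precisely when $[T_j] = 0$, and \emph{this} is where the fully separating condition enters. (A minor additional point: your description of~$\boldsymbol{\gamma}$ omits the binding orbits $\beta_i \subset B_0$, which must also be included whenever the planar piece has nonempty binding.)
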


Appealing again to the isomorphism of \cite{Taubes:ECH=SWF5} together with
known facts \cite{KronheimerMrowka:contact}
about Seiberg-Witten theory and weak symplectic fillings, we
obtain the following consequence, which is also proved by a more direct 
holomorphic curve argument in \cite{NiederkruegerWendl}.

\begin{cor}
\label{cor:weak}
If $(M,\xi)$ is a closed contact $3$-manifold with fully separating
planar torsion, then it is not weakly fillable.
\end{cor}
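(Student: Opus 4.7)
The plan is to deduce Corollary~\ref{cor:weak} from Theorem~\ref{thm:twisted} by combining it with the Taubes isomorphism and a nonvanishing theorem for the Seiberg-Witten contact invariant of a weak filling. Suppose for contradiction that $(M,\xi)$ carries fully separating planar torsion and admits a weak symplectic filling $(W,\omega)$. By Theorem~\ref{thm:twisted}, the twisted ECH contact invariant $[\boldsymbol{\emptyset}] \in \ECH_*(M,\lambda,J;\ZZ[H_2(M)])$ vanishes.

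First I would transport this vanishing across Taubes' isomorphism \cite{Taubes:ECH=SWF5} to conclude vanishing of the corresponding twisted Seiberg-Witten contact class in monopole Floer homology. The version of Taubes' theorem needed here is the one respecting coefficients indexed by $H_2(M)$, which sends the ECH contact invariant to the Kronheimer-Mrowka contact class. Since the universal coefficient ring $\ZZ[H_2(M)]$ maps to every local system on $M$, vanishing of $[\boldsymbol{\emptyset}]$ at this universal level implies vanishing of the Seiberg-Witten contact invariant under every specialization of coefficients.

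Next, I would invoke the Kronheimer-Mrowka nonvanishing theorem \cite{KronheimerMrowka:contact}: for any weak filling $(W,\omega)$, after a small perturbation of $\omega$ so that $[\omega|_M]$ is rational, the monopole Floer contact class with coefficients in the local system determined by $[\omega|_M]$ is \emph{nonzero}. Specializing the universal vanishing from the previous paragraph to this particular local system yields a direct contradiction, so no weak filling can exist.

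The main subtle point is to verify that Taubes' isomorphism is compatible with precisely the twisted coefficient systems on which Kronheimer-Mrowka's nonvanishing is formulated; the definition of \emph{fully separating} in Definition~\ref{defn:separating} was tailored so that the pairing between the relative homology classes of the distinguished $J$-holomorphic curves and $H_2(M)$ behaves trivially, which is exactly what makes Theorem~\ref{thm:twisted} strong enough to cover every such local system. For readers who wish to bypass the substantial Seiberg-Witten machinery, a direct holomorphic curve argument is carried out in joint work with Niederkr\"uger \cite{NiederkruegerWendl}, where one caps the weak filling by a symplectic cobordism and derives a contradiction from the existence and uniqueness results of \S\ref{sec:openbook} applied directly in the weak setting.
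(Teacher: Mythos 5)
Your proposal is correct and follows essentially the same route as the paper, which derives Corollary~\ref{cor:weak} precisely by combining Theorem~\ref{thm:twisted} with Taubes' isomorphism \cite{Taubes:ECH=SWF5} and the Kronheimer--Mrowka nonvanishing results for weak fillings \cite{KronheimerMrowka:contact}, while noting the alternative direct holomorphic curve proof in \cite{NiederkruegerWendl}. Your elaboration on specializing the fully twisted coefficients to the local system determined by $[\omega|_M]$ is exactly the content the paper leaves implicit under ``known facts.''
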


As we will show in \S\ref{subsec:examples}, these results
yield many simple new examples of nonfillable contact manifolds.

\begin{remark}
One can refine the above result on twisted coefficients
as follows: for a given closed $2$-form $\Omega$ on~$M$, define $(M,\xi)$ to 
have \emph{$\Omega$-separating planar torsion} if it contains a planar torsion
domain $M_0$ such that every interface torus $T \subset M_0$ lying in
the planar piece satisfies
$\int_T \Omega = 0$.  Under this condition, our computation implies 
a similar vanishing
result for the ECH contact invariant with twisted coefficients in
$\ZZ[H_2(M) / \ker\Omega]$, with the consequence that $(M,\xi)$
admits no weak filling $(W,\omega)$ for which $\omega|_{TM}$
is cohomologous to~$\Omega$.  A direct proof of this is also given in
\cite{NiederkruegerWendl}.
\end{remark}
\begin{remark}
The fully separating condition in Corollary~\ref{cor:weak} is also sharp
in some sense, as for instance, there are infinitely many tight
$3$-tori which have nonseparating Giroux torsion (and hence
planar $1$-torsion by Corollary~\ref{cor:lowerTorsion} below) but are
weakly fillable by a construction of Giroux \cite{Giroux:plusOuMoins}.
Further examples of this phenomenon are constructed in
\cite{NiederkruegerWendl} for planar $k$-torsion with any $k \ge 1$.
\end{remark}

Let us now discuss the relationship between planar torsion and the most popular
previously known obstructions to symplectic filling.  Recall that a contact
$3$-manifold $(M,\xi)$ is \emph{overtwisted} whenever it 
contains an embedded closed disk $\dD \subset M$
whose boundary is a Legendrian knot with vanishing Thurston-Bennequin number.
Due to the powerful classification result of Eliashberg 
\cite{Eliashberg:overtwisted}, this is equivalent to the condition that
$(M,\xi)$ admit a contact embedding of the following object:

\begin{defn}
\label{defn:LutzTube}
A \emph{Lutz tube} is the contact manifold $(S^1 \times \DD,\xi)$ with
coordinates $(\theta,\rho,\phi)$, where $(\rho,\phi)$ are polar coordinates
on the disk and
\begin{equation}
\label{eqn:twisting}
\xi = \ker\left[ f(\rho)\ d\theta + g(\rho)\ d\phi \right]
\end{equation}
for some pair of smooth functions $f,g$ such that the path
$$
[0,1] \to \RR^2 \setminus\{0\} : \rho \mapsto (f(\rho),g(\rho))
$$
makes exactly one half-turn (counterclockwise) about the origin, moving from
the positive to the negative $x$-axis.
\end{defn}

Similarly, we say that $(M,\xi)$ has \emph{Giroux torsion} if it admits a
contact embedding of the following:

\begin{defn}
\label{defn:GirouxTorsion}
A \emph{Giroux torsion domain} is the contact manifold $([0,1] \times T^2,\xi)$
with coordinates $(\rho,\phi,\theta) \in [0,1] \times S^1 \times S^1$ and
$\xi$ defined via these coordinates as in \eqref{eqn:twisting}, with
the path $\rho \mapsto (f(\rho),g(\rho))$ making one full (counterclockwise)
turn about the origin, beginning and ending on the positive $x$-axis.
\end{defn}

We now show that the known properties of overtwisted contact manifolds and
Giroux torsion with respect to fillability are special cases of our more 
general results involving planar torsion.

\begin{prop}
\label{prop:allerleiTorsion}
If $L \subset M$ is a Lutz tube in a closed contact $3$-manifold $(M,\xi)$,
then any open neighborhood of $L$ contains a planar $0$-torsion domain.
Similarly if $L$ is a Giroux torsion domain, then any open neighborhood of $L$
contains a planar $1$-torsion domain.
\end{prop}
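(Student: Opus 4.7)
I plan to prove both cases by a uniform three-step scheme: identify an interior pre-Lagrangian torus inside $L$, organize the resulting decomposition as a candidate blown-up summed open book of the appropriate planar torsion type, and then realize this structure by an ambient contact isotopy of $M$ produced by Gray's stability theorem and supported inside the given neighborhood $U$.

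For the Lutz tube case, I would write $L = S^1 \times \DD$ in coordinates $(\theta, \rho, \phi)$ with contact form $\alpha_L = f(\rho)\,d\theta + g(\rho)\,d\phi$, so that the path $\rho \mapsto (f(\rho), g(\rho))$ traces a counterclockwise half-turn in $\RR^2 \setminus \{0\}$. At the unique radius $\rho_0 \in (0, 1)$ with $f(\rho_0) = 0$, the torus $T_0 = \{\rho = \rho_0\}$ is pre-Lagrangian with characteristic foliation tangent to $\partial_\theta$, matching the meridional class $m_{T_0}$ that arises when one regards the inner solid torus $\{\rho \leq \rho_0\}$ as the blow-up of the disk-page open book on $S^3$ at its binding. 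I would then declare $T_0$ to be an interface torus, assigning the inner region the role of planar piece (disk pages coming from the meridian disks of the solid torus) and the outer region $\{\rho_0 \leq \rho \leq 1\}$ the role of padding (annular pages, obtained from the annular-page open book on $S^1 \times S^2$ by blowing up both binding circles, summing one to the planar piece's binding to produce $T_0$ and blowing up the other to produce $\partial L$).

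Next I would construct a Giroux form $\alpha_0 = f_0(\rho)\,d\theta + g_0(\rho)\,d\phi$ for this open book structure: the path $(f_0, g_0)$ must cross the $y$-axis at $\rho_0$ with horizontal tangent (so $g_0'(\rho_0) = 0$) and match the tangency conditions dictated by smoothness at the core and by the Giroux condition at the outer boundary, while still executing the same counterclockwise half-turn as $(f, g)$. Since both paths have identical endpoints and winding number $+\pi$ in $\RR^2 \setminus \{0\}$, they are joined by a homotopy of contact paths (the contact condition $fg' - f'g > 0$ selects the same connected component). Extending this homotopy trivially outside $L$ produces a family of contact structures on $M$ from $\xi$ to $\ker\alpha_0$, and Gray's stability theorem then yields an ambient contact isotopy $\psi_t\colon M \to M$ supported in any preassigned neighborhood of $L$ inside $U$ with $\psi_1^*(\ker\alpha_0) = \xi$. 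The image $\psi_1(L) \subset U$ is then a contact embedding of a planar $0$-torsion domain. The Giroux torsion case is entirely parallel: the path now makes a full counterclockwise turn, the planar piece is built with annular pages instead of disk pages, and the resulting structure is a planar $1$-torsion domain.

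The principal obstacle is the second step, namely verifying that the characteristic foliation directions on the interface and boundary tori correctly match the meridional classes $m_T$ dictated by the binding sum conventions. This alignment constrains the orientation-reversing normal bundle identification used to define the binding sum, and it requires a compatible smoothing across $T_0$: the contact condition $f_0 g_0' - f_0' g_0 > 0$ forbids the naive tangential zero of $g_0$ and forces $g_0$ to change sign transversally as $\rho$ crosses $\rho_0$. Once these bookkeeping details are settled, the Gray stability step is essentially formal.
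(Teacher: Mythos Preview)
Your approach has the right structure, and the interface analysis at $\rho_0$ is essentially correct. But there is a genuine gap at the outer boundary $\partial L = \{\rho = 1\}$: you cannot simultaneously impose ``identical endpoints'' on $(f_0,g_0)$ (i.e.\ $g_0(1) = g(1) = 0$, needed so the homotopy extends trivially outside $L$) and the Giroux form conditions at $\partial L$. Indeed, with annular pages $\{\theta = \text{const}\}$ in the padding, the page boundaries on $\partial L$ are $\phi$-circles, so condition~(2) of Definition~\ref{defn:GirouxForm} forces the Reeb field to lie along $\partial_\phi$, meaning $g_0'(1) = 0$; but then $g_0(1) = g_0'(1) = 0$ kills the contact determinant $D_0(1) = f_0(1) g_0'(1) - f_0'(1) g_0(1)$. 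Condition~(3) fails independently for the same reason: with $g_0(1)=0$ the characteristic foliation on $\partial L$ is along $\partial_\phi = \ell_{\partial L}$, so it cannot represent a meridian class forming a basis with $\ell_{\partial L}$. (Your closing remark about ``$g_0$ changing sign transversally at $\rho_0$'' also looks garbled---presumably you mean $f_0$.)

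The paper resolves exactly this difficulty by \emph{enlarging} $L$ slightly to $L_\epsilon = S^1 \times \DD_{1+\epsilon}$ inside the given open neighborhood (and similarly extending the Giroux torsion domain on both sides to $[-\epsilon,1+\epsilon]\times T^2$). At $\rho = 1+\epsilon$ one can then arrange $g'(1+\epsilon)=0$ with $g(1+\epsilon) \neq 0$, so both Giroux conditions hold there. Note also that the paper avoids Gray stability altogether: since $\xi$ on $L_\epsilon$ is already $\ker(f\,d\theta + g\,d\phi)$ for some $(f,g)$, one simply checks that this expression, with the extension to $[1,1+\epsilon]$ chosen appropriately, is itself a Giroux form for some planar $0$-torsion structure. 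Your detour through a homotopy of contact forms is unnecessary once the domain is enlarged.
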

\begin{proof}
Suppose $L \subset M$ is a Lutz tube.  Then for some 
$\epsilon > 0$, an open neighborhood of $L$ contains a region identified with
$$
L_\epsilon := S^1 \times \DD_{1 + \epsilon},
$$
where $\DD_r$ denotes the closed disk of
radius~$r$ and $\xi = \ker\lambda_\epsilon$ for a contact form
$$
\lambda_\epsilon = f(\rho)\ d\theta + g(\rho)\ d\phi
$$
with the following properties (see Figure~\ref{fig:fg}, left):
\begin{enumerate}
\item $f(0) > 0$ and $g(0) = 0$,
\item $f(1) < 0$ and $g(1) = 0$,
\item $f(\rho) g'(\rho) - f'(\rho) g(\rho) > 0$ for all $\rho > 0$,
\item $g'(1 + \epsilon) = 0$,
\item $f(1 + \epsilon) / g(1 + \epsilon) \in \ZZ$.
\end{enumerate}
\begin{figure}
\begin{center}
\includegraphics{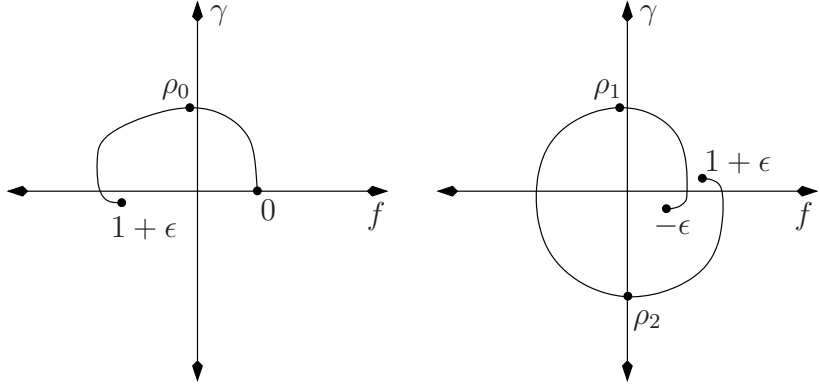}
\caption{\label{fig:fg}
The path $\rho \mapsto (f(\rho),g(\rho))$ used to define the contact form
on $L_\epsilon$ (for the Lutz tube at the left and Giroux torsion domain
at the right) in the proof of Prop.~\ref{prop:allerleiTorsion}.}
\end{center}
\end{figure}
Setting $D(\rho) := f(\rho)g'(\rho) - f'(\rho)g(\rho)$, the Reeb vector field
defined by $\lambda_\epsilon$ in the region $\rho > 0$ is
$$
X_\epsilon = \frac{1}{D(\rho)} \left[ g'(\rho) \p_\theta - f'(\rho) \p_\phi \right],
$$
and at $\rho = 0$, $X_\epsilon = \frac{1}{f(0)} \p_\theta$.  Thus $X_\epsilon$ in these 
coordinates depends only on $\rho$ and its direction is always determined by 
the slope of the path $\rho\mapsto (f(\rho),g(\rho))$ in $\RR^2$; in particular,
$X_\epsilon$ points in the $-\p_\phi$-direction at $\rho=1+\epsilon$, and in the
$+\p_\phi$-direction at some other radius $\rho_0 \in (0,1)$.  We can choose
$f$ and $g$ without loss of generality so that these are the only radii
at which $X_\epsilon$ is parallel to $\pm\p_\phi$.

We claim now that $L_\epsilon$ is a planar $0$-torsion domain with planar 
piece $L_\epsilon^P := S^1 \times \DD_{\rho_0}$.  Indeed, $L_\epsilon^P$
can be obtained from the open book on the tight $3$-sphere with disk-like
pages by blowing up the binding: the pages in the interior of $L_\epsilon^P$
are defined by $\{\theta = \text{const}\}$.  Similarly, the $\theta$-level
sets in the closure of $L_\epsilon \setminus L_\epsilon^P$ form the pages of
a blown up open book, obtained from an open book
with cylindrical pages.  The condition $f(1+\epsilon) / g(1 + \epsilon) \in \ZZ$
implies that the characteristic foliation on $T := \p L_\epsilon$
has closed leaves homologous to a primitive class $m_T \in H_1(T)$, which
together with the homology class of the Reeb orbits on~$T$ forms a basis
of $H_1(T)$.  Thus our chosen contact form
$\lambda_\epsilon$ is a Giroux form for some blown up
summed open book.  (Note that the monodromy of the blown up open book in 
$L_\epsilon \setminus L_\epsilon^P$ is not trivial
since the distinguished meridians on
$\p L_\epsilon$ and $\p L_\epsilon^P$ are not homologous.)

The argument for Giroux torsion is quite similar, so we'll only sketch it:
given $L = [0,1]\times T^2 \subset M$, we can expand $L$ slightly on 
\emph{both} sides to create a domain
$$
L_\epsilon = [-\epsilon,1+\epsilon] \times T^2,
$$
with a contact form $\lambda_\epsilon$ that induces a suitable characteristic
foliation on $\p L_\epsilon$ and
whose Reeb vector field points in the
$\pm\p_\phi$-direction at $\rho = -\epsilon$, $\rho = 1+\epsilon$ and 
exactly two other radii $0 < \rho_1 < \rho_2 < 1$ 
(see Figure~\ref{fig:fg}, right).  This splits $L_\epsilon$
into three pieces, of which 
$L_\epsilon^P := \{ \rho \in [\rho_1,\rho_2] \}$
is the planar piece of a planar $1$-torsion domain, as it can be obtained
from an open book with cylindrical pages and trivial monodromy by blowing up
both binding components.  The padding now consists of two separate 
blown up open books with cylindrical pages and nontrivial monodromy.
\end{proof}

\begin{cor}
\label{cor:lowerTorsion}
Suppose $(M,\xi)$ is a closed contact $3$-manifold.  Then:
\begin{itemize}
\item $(M,\xi)$ has planar $0$-torsion if and only if it is overtwisted.
\item If $(M,\xi)$ has Giroux torsion then it has planar $1$-torsion.
\end{itemize}
\end{cor}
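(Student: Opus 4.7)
The second bullet and the ``overtwisted $\Rightarrow$ planar $0$-torsion'' direction of the first bullet follow immediately from Proposition~\ref{prop:allerleiTorsion}: if $(M,\xi)$ is overtwisted, Eliashberg's classification \cite{Eliashberg:overtwisted} produces an embedded Lutz tube, and any open neighborhood of it contains a planar $0$-torsion domain; the Giroux torsion case is identical. The real content is therefore the implication ``planar $0$-torsion $\Rightarrow$ overtwisted,'' which I plan to establish by exhibiting a Lutz tube inside any planar $0$-torsion domain.

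So let $M_0\subset(M,\xi)$ be a planar $0$-torsion domain with planar piece $M^P$ (a solid torus whose blown up summed open book has disk pages) and non-empty, non-symmetric padding, meeting along the interface torus $T=\partial M^P$. I would first apply Proposition~\ref{prop:GirouxUniqueness} together with a standard normal form near pre-Lagrangian tori to choose coordinates $(\theta,\rho,\phi)\in S^1\times(-a,b)\times S^1$ on a tubular neighborhood $U$ of $T$ in $M_0$, with $T=\{\rho=0\}$, $U\cap M^P=\{\rho\le 0\}$, $U\cap M^{\mathrm{pad}}=\{\rho\ge 0\}$, and a Giroux form of the shape $\lambda|_U=f(\rho)\, d\theta + g(\rho)\, d\phi$. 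Computing the Reeb field of $\lambda$ and imposing the Giroux axioms at $T$ pins down the first-order behavior of $(f,g)$: one obtains $f(0)=0$ and $g(0)>0$ (closed Legendrian leaves of the characteristic foliation on $T$ homologous to $m_T=\partial_\theta$) together with $g'(0)=0$ and $f'(0)<0$ (Reeb positively tangent to the page boundaries on $T$ along $+\partial_\phi$).

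Tracing $\rho\mapsto(f(\rho),g(\rho))$ backward through $M^P$ sweeps counterclockwise from a point $(f(-a),0)$ on the positive $x$-axis (the core of the solid torus, where smoothness in the polar coordinate $\phi$ forces $g(-a)=g'(-a)=0$) up to $(0,g(0))$ on the positive $y$-axis at $T$: this is exactly the inner half of a Lutz tube. The plan is then to extend the path counterclockwise through a short padding collar $\{0\le\rho\le c\}$ so as to reach the negative $x$-axis at $\rho=c$. Once this is done, the slab $\{-a\le\rho\le c\}\subset U$ will embed a Lutz tube into $M$ in the sense of Definition~\ref{defn:LutzTube}, and overtwistedness of $(M,\xi)$ will follow from Eliashberg; alternatively, one sees it directly, since the cross-section disk $\{\theta=\theta_0,\,-a\le\rho\le c\}$ will have Legendrian boundary at $\rho=c$ (because $g(c)=0$ forces $\partial_\phi\in\xi$ there) whose tangent frame coincides with that of $\xi$ along it, making it an overtwisted disk of vanishing Thurston--Bennequin invariant.

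The hard step will be verifying that a non-symmetric padding does admit, within the isotopy class of Giroux forms guaranteed by Proposition~\ref{prop:GirouxUniqueness}, a representative whose $(f,g)$-profile on a collar of $T$ genuinely continues counterclockwise rather than turning back. The key heuristic I would invoke is that the only first-order obstruction to such a continuation is an orientation-reversing mirror symmetry with another blown-up disk open book summed to $M^P$, which is precisely the symmetric case excluded by condition~(3) of Definition~\ref{defn:planarTorsionDomain} (whose simplest instance yields the tight $S^1\times S^2$ of Example~\ref{ex:symmetric}). To handle any non-symmetric padding, my plan is to construct the required extension explicitly on a short collar $T\times[0,\delta]$, specifying $(f,g)$ compatibly with the already-fixed first-order data at $T$ and with the Giroux axioms at $\{\rho=\delta\}$, and then to interpolate to the given ambient Giroux form deeper inside the padding using a partition of unity together with Gray stability to remain inside the original isotopy class.
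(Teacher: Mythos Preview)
There is a genuine gap in your argument for ``planar $0$-torsion $\Rightarrow$ overtwisted.'' Your plan hinges on modifying the Giroux form on a short collar $T\times[0,\delta]$ so that the $(f,g)$-curve swings counterclockwise from the positive $y$-axis to the negative $x$-axis, then interpolating back to the ambient form and invoking Gray stability. This cannot work as a purely local construction. On the padding side of~$T$ the ambient Giroux form is, in your coordinates, governed by the standard blown-up binding model, and this local model is \emph{identical} regardless of the padding's page topology; in particular its $(f,g)$-direction winds only infinitesimally past the positive $y$-axis on any short collar, in the symmetric and non-symmetric cases alike. Your modified curve therefore carries strictly more winding than the original, and to rejoin the ambient form beyond $\rho=\delta$ while keeping $D>0$ you must send the curve all the way around through the lower half-plane. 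But once it enters the lower half-plane, $g'$ changes sign and the Reeb vector ceases to be positively transverse to the padding pages, so the result is \emph{not} a Giroux form and Proposition~\ref{prop:GirouxUniqueness} no longer tells you that $\ker\lambda'$ is isotopic to~$\xi$; nor does Gray stability apply, since you have not produced a path of contact structures connecting the two. In effect your modification performs a Lutz twist along~$T$, which yields an overtwisted contact structure but not the one you started with. Your heuristic that the only obstruction is ``first-order'' and disappears in the non-symmetric case is thus incorrect: the obstruction is the global winding, which cannot be read off from any finite jet at~$T$.

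The paper's proof supplies exactly the global input your argument lacks, and it does so differently according to the topology of the padding pages. If those pages have positive genus and a single boundary component, one glues a padding page to a disk page of~$M_0^P$ along~$T$ to obtain a closed convex surface whose dividing set is a single circle bounding a disk, and Giroux's criterion then produces an overtwisted disk nearby. If instead the padding pages have at least two boundary components, one builds a Giroux form on the padding by a Thurston--Winkelnkemper construction with the page $1$-form taken to be $(-1+s)\,dt$ near the boundary component at~$T$ and $(1+s)\,dt$ near the remaining boundary components; the presence of those extra components is precisely what makes this compatible with Stokes' theorem (and is why the construction fails in the excluded symmetric case), and the resulting Giroux form annihilates a boundary-parallel Legendrian bounding an overtwisted disk. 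In both cases the nontrivial global topology of the padding is the essential ingredient.
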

\begin{proof}
The only part not immediate from Prop.~\ref{prop:allerleiTorsion} is the
claim that $(M,\xi)$ must be overtwisted if it contains a planar $0$-torsion
domain~$M_0$.  One can see this as follows: note first that if we write
$$
M_0 = M_0^P \cup M_0',
$$
where $M_0^P$ is the planar piece and $M_0' = \overline{M_0 \setminus M_0^P}$
is the padding,
then $M_0'$ carries a blown up summed open book with pages that are not
disks (which means $(M_0,\xi)$ is not the tight $S^1 \times S^2$).
If the pages in $M_0'$ are surfaces with positive genus and one boundary
component, then one can glue one of these together with a page in $M_0^P$
to form a convex surface $\Sigma \subset M_0$ whose dividing set is
$\p M_0^P \cap \Sigma$.  The latter is the boundary of a disk in~$\Sigma$,
so Giroux's criterion (see e.g.~\cite{Geiges:book}*{Prop.~4.8.13})
implies the existence of an overtwisted disk near~$\Sigma$.

In all other cases the pages $\Sigma$ in $M_0'$ have multiple boundary
components
$$
\p\Sigma = C^P \cup C',
$$
where we denote by $C^P$ the connected component situated near the
interface $\p M_0^P$, and $C' = \p\Sigma \setminus C^P$.
We can then find overtwisted disks by constructing a particular Giroux form
using a small variation on the Thurston-Winkelnkemper construction as
described e.g.~in \cite{Etnyre:lectures}*{Theorem~3.13}.
Namely, choose coordinates $(s,t) \in (1/2,1] \times S^1$ on a collar
neighborhood of each component of $\p\Sigma$ and define
a $1$-form $\lambda_1$ on $\Sigma$ with the following properties:
\begin{enumerate}
\item $d\lambda_1 > 0$
\item $\lambda_1 = (1 + s)\ dt$ near each component of~$C'$
\item $\lambda_1 = (-1 + s)\ dt$ near~$C^P$
\end{enumerate}
Observe that all three conditions cannot be true unless $C'$ is nonempty,
due to Stokes' theorem.  Now following the construction described in 
\cite{Etnyre:lectures}, one can produce a Giroux form $\lambda$ on $M_0'$
which annihilates some boundary parallel curve $\ell$ near
$\p M_0^P$ in a page, and
fits together smoothly with some Giroux form in $M_0^P$, so that
$\ker\lambda$ is a supported contact structure and is isotopic to~$\xi$
by Prop.~\ref{prop:GirouxUniqueness}.  Then $\ell$ is the boundary of
an overtwisted disk.
\end{proof}

\begin{remark}
We'll see in Corollary~\ref{cor:higherOrder} below that it is also easy to
construct examples of contact manifolds that have planar
torsion of any order greater than~$1$ but no Giroux torsion.  It is not clear
whether there exist contact manifolds with planar $1$-torsion but no
Giroux torsion.
\end{remark}

By a recent result of Etnyre and Vela-Vick \cite{EtnyreVelavick},
the complement of the binding of a supporting open book never contains a
Giroux torsion domain.  In \S\ref{sec:nonfillable} we will prove a natural
generalization of this:

\begin{thm}
\label{thm:complement}
Suppose $(M,\xi)$ is a contact $3$-manifold supported by an open book
$\pi : M \setminus B \to S^1$.  Then any planar torsion domain in $(M,\xi)$
must intersect~$B$.
\end{thm}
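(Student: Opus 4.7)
I would argue by contradiction: suppose $M_0 \subset M \setminus B$ is a planar torsion domain, with planar piece $M_0^P$. The strategy is to produce two families of $J$-holomorphic curves in the symplectization $\RR \times M$ whose coexistence is ruled out by positivity of intersections. Using Proposition~\ref{prop:GirouxUniqueness} together with the fact that $M_0$ is disjoint from~$B$, I first choose a single contact form $\lambda$ for $\xi$ that simultaneously serves as a Giroux form for the supporting open book~$\pi$ and for the blown up summed open book on~$M_0$, interpolated smoothly across a collar of~$\p M_0$. I then pick a compatible almost complex structure $J$ on $\RR \times M$ to which the main existence and uniqueness theorem of~\S\ref{sec:openbook} applies for both structures simultaneously.

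Applying that theorem to~$\pi$ gives a smooth family $\{u_\theta\}_{\theta \in S^1}$ of embedded $J$-holomorphic pages with positive ends at the binding orbits over~$B$ and no negative ends, whose $\RR$-translates foliate the open set $\RR \times (M \setminus B)$. Applying it instead to the planar piece $M_0^P$ produces an embedded $J$-holomorphic curve $v$ whose image lies in $\RR \times M_0^P$, with positive ends at an orbit set $\boldsymbol{\gamma}$ of Reeb orbits on the interface/boundary tori of~$M_0^P$ (all disjoint from~$B$) and no negative ends. Since the asymptotic orbits of~$v$ are disjoint from those of each~$u_\theta$, the curve $v$ cannot coincide with any leaf of the above foliation, yet its image lies in the foliated region.

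The contradiction then comes from two incompatible computations of the intersection of~$v$ with a generic page~$u_\theta$. On the geometric side, $v$ is not a leaf of the $\RR$-parametrized foliation of $\RR \times (M \setminus B)$ by the $u_\theta$'s, so it must cross a generic leaf transversely, and by positivity of intersections for distinct $J$-holomorphic curves we get $v \cdot u_\theta \ge 1$. On the asymptotic side, since $v$ and $u_\theta$ share no asymptotic orbits, Siefring's intersection pairing reduces to $v \ast u_\theta = v \cdot u_\theta$, and a direct computation using the canonical trivializations coming from~$\pi$ should yield $v \ast u_\theta = 0$. The two together give the contradiction. The main obstacle is the asymptotic computation: one must show that the orbits of $\boldsymbol{\gamma}$, which sit on pre-Lagrangian tori inside $M \setminus B$, have trivial linking with the components of~$B$ relative to the framings induced by the pages of~$\pi$, and hence contribute nothing to $v \ast u_\theta$; this in turn reduces to a topological observation about how a planar torsion domain embedded in $M \setminus B$ must sit with respect to the mapping-torus structure of~$\pi$ on $M \setminus B$. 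Once this vanishing is in hand, the uniqueness/foliation statement supplied by the main theorem of~\S\ref{sec:openbook}, combined with positivity of intersections, converts the geometric conclusion into the desired algebraic contradiction.
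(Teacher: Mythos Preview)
Your approach has a genuine gap at the step where you claim $v * u_\theta = 0$. This claim is false, and in fact the opposite inequality holds. Once you have chosen $\lambda$ to be a Giroux form for the open book~$\pi$, the Reeb vector field $X_\lambda$ is everywhere positively transverse to the pages of~$\pi$. In particular, every asymptotic orbit $\gamma$ of~$v$ (which lies on some interface or boundary torus of~$M_0^P$) is a closed Reeb orbit in $M \setminus B$, and therefore its intersection number with any page $\pi^{-1}(\theta)$ is strictly positive. Computing $v * u_\theta$ via the homotopy trick of Prop.~\ref{prop:uniqueness} (translating $u_\theta$ high and replacing~$v$ by trivial cylinders over its asymptotic orbits), one obtains $v * u_\theta = \sum_\gamma (\RR \times \gamma) * u_\theta$, and since the orbits of~$v$ are disjoint from~$B$, each term is simply the geometric count $\gamma \cdot \pi^{-1}(\theta) > 0$. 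So $v * u_\theta > 0$, which is perfectly consistent with $v \cdot u_\theta > 0$, and no contradiction arises. The ``topological observation'' you allude to, that the orbits of~$v$ should have trivial linking with~$B$, is exactly what fails: Reeb orbits in the complement of the binding always link the binding positively.

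There is also a secondary technical obstacle: Theorem~\ref{thm:openbook} only produces $J$-holomorphic pages for irreducible pieces with \emph{genus zero} pages (item~(5)), and the~$J$ it supplies is Fredholm regular. If the pages of~$\pi$ have positive genus, their holomorphic lifts exist only for the non-generic~$J_0$ of Prop.~\ref{prop:holOBD} and vanish under the perturbation to a regular~$J$ (cf.~Prop.~\ref{prop:index}). You therefore cannot in general arrange a single~$J$ for which both families exist simultaneously. The paper circumvents this by building a symplectic cobordism (Prop.~\ref{prop:cobordism}) with the non-generic~$J_0$ near the negative end and the regular~$J$ from Theorem~\ref{thm:openbook} near the positive end; the pages of~$\pi$ then act as a holomorphic ``wall'' confining the moduli space $\mM_0(J)$ of curves coming from~$M_0^P$, and the argument concludes via the deformation and compactness results of \S\ref{subsubsec:compactness} rather than a direct intersection count.
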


\subsection{Some examples of type $S^1 \times \Sigma$}
\label{subsec:examples}

Suppose $\Sigma$ is a closed oriented surface containing a non-empty multicurve
$\Gamma \subset \Sigma$ that divides it into two (possibly disconnected)
pieces $\Sigma_+$ and $\Sigma_-$.  We define the contact manifold
$(M_\Gamma,\xi_\Gamma)$, where
$$
M_\Gamma := S^1 \times \Sigma
$$
and $\xi_\Gamma$ is the unique $S^1$-invariant contact structure that
makes $\{\text{const}\} \times \Sigma$ into a convex surface with
dividing set~$\Gamma$.  The existence and uniqueness of such a contact
structure follows from a result of Lutz \cite{Lutz:77}.
We claim that $(M_\Gamma,\xi_\Gamma)$ is a
partially planar domain if any connected component of $\Sigma\setminus\Gamma$
has genus zero.  Indeed, a supporting summed open book can easily be
constructed as follows: assign to each connected component of
$\Sigma_+ \setminus\Gamma$ the orientation determined by~$\Sigma$, and
the opposite orientation to each component of $\Sigma_-\setminus\Gamma$.
Then $S^1 \times \Gamma$ is a set of tori that divide $M_\Gamma$ into
multiple connected components, on each of which $\xi_\Gamma$ is supported
by a blown up open book whose pages are the $S^1$-families of pieces of
$\Sigma_+$ or $\Sigma_-$, with trivial monodromy.
Theorems~\ref{thm:ECH} and~\ref{thm:twisted} now lead
to the following.

\begin{cor}
\label{cor:examples}
Suppose $\Sigma\setminus\Gamma$ has a connected component $\Sigma_0$ 
of genus zero, and $\Sigma\setminus \overline{\Sigma}_0$
is not diffeomorphic to $\Sigma_0$.  Then $(M_\Gamma,\xi_\Gamma)$ has
vanishing (untwisted) ECH contact invariant.  Moreover, the invariant with 
twisted coefficients also vanishes if every component of
$\p\overline{\Sigma}_0$ is nullhomologous in~$\Sigma$.
\end{cor}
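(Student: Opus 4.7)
The plan is to exhibit $(M_\Gamma,\xi_\Gamma)$ itself as a planar $k$-torsion domain of order $k := |\p\overline{\Sigma}_0| - 1 \geq 0$ and then invoke Theorems~\ref{thm:ECH} and~\ref{thm:twisted}. The discussion preceding the corollary already equips $M_\Gamma$ with a supporting blown up summed open book whose irreducible subdomains are precisely the manifolds $S^1\times\Sigma_i$ as $\Sigma_i$ ranges over the connected components of $\Sigma\setminus\Gamma$, with interface tori $S^1\times\gamma$ for $\gamma$ a component of $\Gamma$, empty binding, empty boundary, and trivial monodromy on each piece. I would then designate $M^P := S^1\times\Sigma_0$ as the planar piece: its pages are copies of $\Sigma_0$, which is a connected genus-zero surface with $k+1$ boundary components, and $M^P$ is an irreducible subdomain disjoint from $\p M_\Gamma=\emptyset$.

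The three conditions of Definition~\ref{defn:planarTorsionDomain} are then to be verified in turn. Condition~(1) is the page count just noted. Condition~(2) holds because $\Gamma\neq\emptyset$, so $\overline{M_\Gamma\setminus M^P}=S^1\times\overline{\Sigma\setminus\Sigma_0}$ is nonempty. Condition~(3)---that the summed open book is not symmetric in the sense of Example~\ref{ex:symmetric}---is where the hypothesis enters: a symmetric summed open book is by definition a binding sum of exactly two connected open books with diffeomorphic pages, so symmetry would force $\Sigma\setminus\Gamma$ to split as a disjoint union of two connected surfaces, one of which is $\Sigma_0$ and the other diffeomorphic to $\Sigma_0$. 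The assumption $\Sigma\setminus\overline{\Sigma}_0\not\cong\Sigma_0$ rules this out in both possible regimes: either $\Sigma\setminus\overline{\Sigma}_0$ is disconnected, in which case $M_\Gamma$ has at least three irreducible subdomains and cannot be symmetric, or it is connected but not diffeomorphic to $\Sigma_0$. Theorem~\ref{thm:ECH} then yields vanishing of the untwisted ECH contact invariant.

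For the twisted statement I would verify the fully separating condition of Definition~\ref{defn:separating} for the same planar piece. Since $\Sigma_0$ is an open component of $\Sigma\setminus\Gamma$, it is disjoint from $\Gamma$, so no interface tori lie in the interior of $M^P$. The boundary $\p M^P$ is a disjoint union of tori $S^1\times\gamma_i$, where $\gamma_i$ ranges over the components of $\p\overline{\Sigma}_0\subset\Gamma$; by the K\"unneth decomposition $H_2(S^1\times\Sigma)\cong H_2(\Sigma)\oplus\bigl(H_1(S^1)\otimes H_1(\Sigma)\bigr)$ the class $[S^1\times\gamma_i]$ corresponds to $[S^1]\otimes[\gamma_i]$, and therefore vanishes in $H_2(M_\Gamma)$ exactly when $[\gamma_i]=0$ in $H_1(\Sigma)$, which is the stated nullhomology hypothesis. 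Theorem~\ref{thm:twisted} then gives vanishing of the twisted invariant.

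The bulk of the proof is really the correct identification of the planar piece; once the blown up summed open book structure on $M_\Gamma$ is made explicit, every step reduces to unpacking definitions. The one place that requires any real thought is condition~(3), since the definition of symmetry rules out both more-than-two-piece decompositions and diffeomorphic two-piece decompositions; the efficiency of the single hypothesis $\Sigma\setminus\overline{\Sigma}_0\not\cong\Sigma_0$ in excluding both cases simultaneously is what makes the statement clean.
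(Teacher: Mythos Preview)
Your proof is correct and follows essentially the same approach as the paper: identify $(M_\Gamma,\xi_\Gamma)$ as a planar torsion domain with planar piece $S^1\times\overline{\Sigma}_0$, then invoke Theorems~\ref{thm:ECH} and~\ref{thm:twisted}. The paper's proof is terser---it simply notes that the summed open book is never ordinary (since $\Gamma\ne\emptyset$) and is symmetric exactly when $\Gamma$ cuts $\Sigma$ into two diffeomorphic connected pieces---while you spell out the verification of each condition in Definitions~\ref{defn:planarTorsionDomain} and~\ref{defn:separating}, including the K\"unneth argument for the twisted case that the paper leaves implicit.
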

\begin{proof}
The natural summed open book on $(M_\Gamma,\xi_\Gamma)$ is never an ordinary
open book (because $\Gamma$ must be nonempty), and it is symmetric if and only 
if $\Gamma$ divides $\Sigma$ into two diffeomorphic connected components.  
Otherwise, if $\Sigma \setminus \Gamma$ contains a planar connected component
$\Sigma_0$, then
$(M_\Gamma,\xi_\Gamma)$ can be regarded as a planar torsion domain with 
planar piece $S^1 \times \overline{\Sigma}_0$.
\end{proof}

Note that $(S^1 \times \Sigma,\xi_\Gamma)$ is always tight whenever $\Gamma$
contains no contractible connected components, as then any Giroux form for the
summed open book has no contractible Reeb orbits.
Whenever this is true, an argument due to Giroux 
(see \cite{Massot:vanishing}*{Theorem~9})
implies that $(S^1 \times \Sigma,\xi_\Gamma)$ also has no Giroux torsion
if no two connected components of~$\Gamma$ are isotopic.
We thus obtain infinitely many examples of contact manifolds 
that have planar torsion of any order greater than~$1$ but no Giroux torsion:

\begin{cor}
\label{cor:higherOrder}
Suppose $\Gamma \subset \Sigma$ has $k \ge 3$ connected components and
divides $\Sigma$ into two connected components, of which one has genus zero
and the other does not.  Then $(S^1 \times \Sigma,\xi_\Gamma)$ does not
have Giroux torsion, but has planar torsion of order~$k-1$.
\end{cor}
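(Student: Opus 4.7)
The plan is to verify the two assertions separately. For the planar $(k-1)$-torsion claim, I would write $\Sigma_0$ for the genus-zero component of $\Sigma \setminus \Gamma$ and $\Sigma_1$ for the other. The construction described just before Corollary~\ref{cor:examples} equips $(M_\Gamma, \xi_\Gamma)$ with a supporting blown up summed open book whose irreducible subdomains are $S^1 \times \overline{\Sigma}_0$ and $S^1 \times \overline{\Sigma}_1$, carrying trivial monodromy and interface $S^1 \times \Gamma$. Since $\Gamma$ separates $\Sigma$ into exactly these two pieces, $\overline{\Sigma}_0$ is a planar surface with $k$ boundary circles, so by Definition~\ref{defn:planarTorsionDomain} the piece $S^1 \times \overline{\Sigma}_0$ is a planar piece of order $k-1$. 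The remaining two conditions of that definition hold: the padding $S^1 \times \overline{\Sigma}_1$ is nonempty (since $\Sigma_1$ is nonempty), and the summed open book is not symmetric in the sense of Example~\ref{ex:symmetric}, because pages on the two sides have different genera and hence are not diffeomorphic.

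For the absence of Giroux torsion, I would invoke the argument of Giroux referenced in the paragraph preceding the corollary (via \cite{Massot:vanishing}*{Theorem~9}): it applies as long as $\Gamma$ has no contractible component and no two components of $\Gamma$ are isotopic in $\Sigma$. Both conditions reduce to a single elementary observation: if $F$ is a compact connected $2$-manifold with boundary embedded in a connected $2$-manifold $S$ with $\partial F \subset \partial S$, then $F = S$. (Invariance of domain makes $F$ open in $S$; compactness makes it closed; connectedness of $S$ then forces equality.) I would apply this as follows: if some $\gamma \in \Gamma$ were nullhomotopic, it would bound an embedded disk $D \subset \Sigma$; choosing an innermost such $\gamma$ ensures $D \cap \Gamma = \gamma$, so $D$ embeds properly in $\overline{\Sigma}_0$ or $\overline{\Sigma}_1$ and must equal that piece, contradicting $k \geq 3$ in the first case and positive genus in the second. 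Likewise, if two components of $\Gamma$ were isotopic, they would cobound an embedded annulus in $\Sigma$; passing to an innermost such annulus yields one disjoint from the rest of $\Gamma$, which is then a properly embedded annulus in $\overline{\Sigma}_0$ or $\overline{\Sigma}_1$ and must equal that piece--again impossible, since neither $\overline{\Sigma}_0$ (with $k \geq 3 > 2$ boundary circles) nor $\overline{\Sigma}_1$ (with positive genus) is an annulus.

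The only real subtlety is combining the innermost-pair argument with the invariance-of-domain observation; together they make clear that the precise hypotheses on $k$ and on the topology of the two sides of $\Sigma \setminus \Gamma$ are exactly what is needed to rule out both pathologies that would otherwise allow Giroux torsion.
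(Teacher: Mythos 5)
Your proposal is correct and follows essentially the same route as the paper, which states this corollary as an immediate consequence of the construction in \S\ref{subsec:examples}, the proof of Corollary~\ref{cor:examples} (planar piece $S^1\times\overline{\Sigma}_0$, non-symmetric since the two sides are not diffeomorphic), and the appeal to Giroux's criterion via \cite{Massot:vanishing}. The only addition is that you spell out the elementary innermost-curve/annulus verification that $\Gamma$ has no contractible components and no two isotopic components, which the paper leaves implicit.
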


Some more examples of planar torsion without Giroux torsion are shown
in Figure~\ref{fig:noGirouxTorsion}.

\begin{remark}
\label{remark:Seifert}
In many cases, one can easily generalize the above results from products
$S^1 \times \Sigma$ to general Seifert fibrations over~$\Sigma$.  
In particular, whenever $\Sigma$ has
genus at least four, one can find dividing sets on $\Sigma$ such that
$(S^1 \times \Sigma,\xi_\Gamma)$ has no Giroux torsion but contains a
proper subset that is a planar torsion domain 
(see Figure~\ref{fig:noGirouxTorsion}).  Then modifications outside of the
torsion domain can change the trivial fibration into arbitrary nontrivial 
Seifert fibrations with planar torsion but no Giroux torsion.  This trick
reproduces many (though not all) of the Seifert fibered $3$-manifolds for
which \cite{Massot:vanishing} proves the vanishing of the Ozsv\'ath-Szab\'o
contact invariant.
\end{remark}

\begin{figure}
\begin{center}
\includegraphics{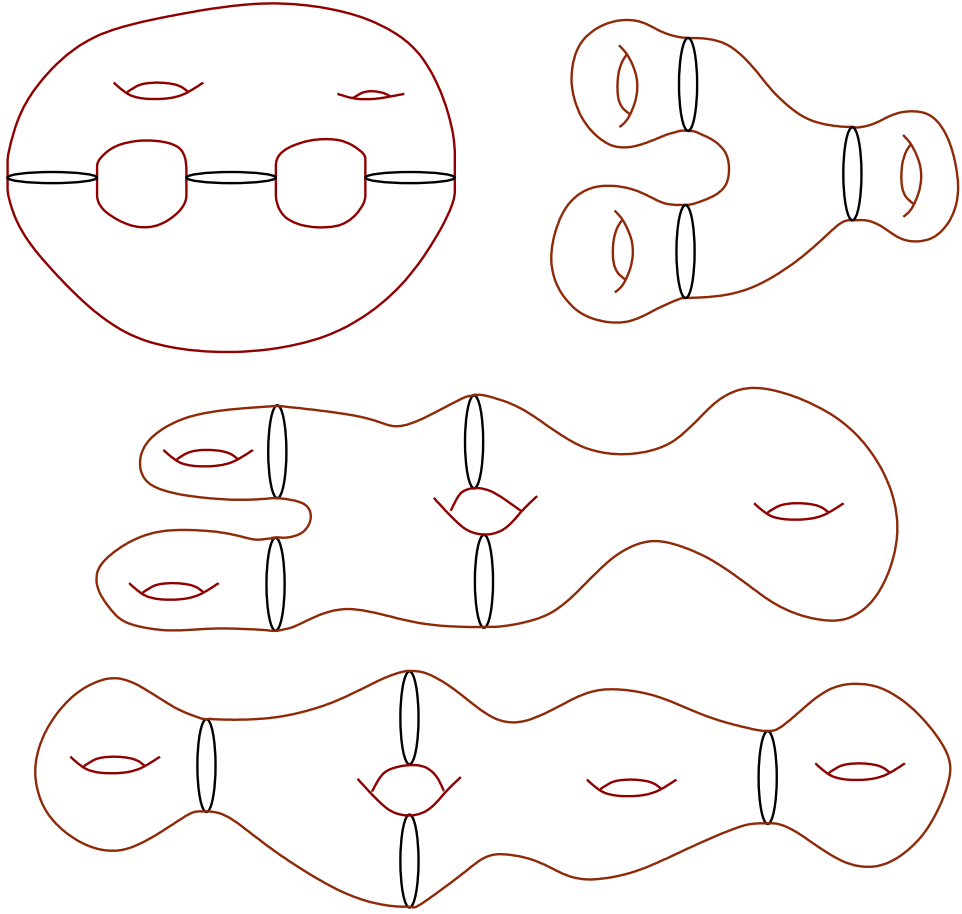}
\caption{\label{fig:noGirouxTorsion}
Some contact manifolds of the form $S^1 \times \Sigma$ that have no 
Giroux torsion but have planar torsion of orders~$2$, $2$, $3$
and~$2$ respectively.  In each case the contact structure
is $S^1$-invariant and induces the dividing set shown on $\Sigma$ in the
picture.  For the example at the upper right, Theorem~\ref{thm:twisted}
implies that the \emph{twisted} ECH contact invariant also vanishes,
so this one is not weakly fillable.  In the bottom example, the planar
torsion domain is a proper subset, thus one can make modifications outside
of this subset to produce arbitrary nontrivial Seifert fibrations
(see Remark~\ref{remark:Seifert}).}
\end{center}
\end{figure}

\subsection{Outlook and open questions}
\label{subsec:outlook}

In light of the widely believed conjecture equating the ECH and
Ozsv\'ath-Szab\'o contact invariants, Theorems~\ref{thm:ECH} 
and~\ref{thm:twisted} would imply:

\begin{conju}
\label{conj:OS}
If $(M,\xi)$ is a closed contact $3$-manifold with planar torsion of any
order, then its untwisted Ozsv\'{a}th-Szab\'{o} contact invariant vanishes,
and the twisted invariant also vanishes whenever every boundary torus
of the planar piece is nullhomologous in~$M$.
\end{conju}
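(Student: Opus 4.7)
Since this is stated as a conjecture in the outlook section, the ``proof'' is more accurately a research program. Two complementary routes present themselves. The most direct is to invoke the anticipated isomorphism between $\ECH_*(M,\lambda,J)$ and the Heegaard Floer group $\widehat{HF}(-M)$, intertwining the ECH contact class $[\boldsymbol{\emptyset}]$ with the Ozsv\'ath--Szab\'o contact class $c(\xi)$ and respecting twisted coefficients in $\ZZ[H_2(M)]$. Granting this, both halves of the conjecture follow at once from Theorems~\ref{thm:ECH} and~\ref{thm:twisted}. The plan on this side is to piece together Taubes' identification of ECH with monopole Floer homology with the work of Kutluhan--Lee--Taubes and of Colin--Ghiggini--Honda relating monopole and Heegaard Floer homology, and then verify that the distinguished contact classes and $H_2(M)$-coefficient systems actually correspond under the composed isomorphism.

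A more self-contained and ultimately more satisfying approach would be to transplant the argument behind Theorem~\ref{thm:ECH} directly into Heegaard Floer theory, using the Honda--Kazez--Mati\'c description of $c(\xi)$ via a partial open book. The plan would be: first, adapt the blown up summed open book of a planar torsion domain inside $(M,\xi)$ to a partial open book decomposition whose combinatorics isolates the planar piece $M^P$; second, identify in the resulting Heegaard Floer chain complex a distinguished chain $\boldsymbol{x}$ produced by the planar piece, analogous to the orbit set $\boldsymbol{\gamma}$ in the ECH proof; third, show that the holomorphic polygons counted by the differential give $\p\boldsymbol{x} = c(\xi)$, paralleling the identity $\p\boldsymbol{\gamma} = \boldsymbol{\emptyset}$ that we establish in~\S\ref{sec:ECH}. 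The twisted case would additionally require tracking relative homology classes of the contributing polygons and using the fully separating hypothesis to ensure that no nontrivial $e^A$ coefficient survives.

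The hard part in either incarnation is the uniqueness step. In the ECH argument, uniqueness of the bounding $J$-holomorphic curve rests on the intersection-theoretic classification of punctured holomorphic curves asymptotic to the binding orbits of a planar page, combined with the non-generic construction of holomorphic open books from~\cite{Wendl:openbook}. A Heegaard Floer analogue would demand a corresponding classification of holomorphic polygons in the symmetric product of the Heegaard surface associated with the chosen partial open book: one must show that contributions not coming from the planar piece cancel, or equivalently that the padding contributes only to the kernel of~$\p$. This looks genuinely difficult, because the combinatorics grow with the number of boundary components of the planar pages and with the complexity of the padding. For that reason I expect the ECH-to-HF route to be the more tractable path to the conjecture in the short term, contingent on progress in matching the two contact invariants under the expected isomorphism.
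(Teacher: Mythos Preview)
Your assessment is correct: this is a conjecture, and the paper offers no proof. The paper's entire justification is the sentence preceding the conjecture, namely that the widely believed identification of the ECH and Ozsv\'ath--Szab\'o contact invariants would make Theorems~\ref{thm:ECH} and~\ref{thm:twisted} imply the statement directly. Your first route is exactly this, and you flesh it out appropriately by naming the relevant isomorphism chain (Taubes, Kutluhan--Lee--Taubes, Colin--Ghiggini--Honda) and flagging that matching the contact classes and twisted coefficient systems is the nontrivial verification step.

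Your second route, transplanting the argument into Heegaard Floer theory via the Honda--Kazez--Mati\'c partial open book description of $c(\xi)$, goes well beyond anything the paper suggests. The paper only remarks that the conjecture would generalize \cite{GhigginiHonda:twisted} and that Massot \cite{Massot:vanishing} has partial results; it does not propose a direct Heegaard Floer attack. Your outline of how such an argument would parallel the ECH proof (isolate a chain from the planar piece, show $\p\boldsymbol{x}=c(\xi)$, handle twisted coefficients under the fully separating hypothesis) is a reasonable speculative program, and your identification of the uniqueness step as the crux is apt. This second approach is genuinely additional content rather than a comparison point, since the paper has nothing to compare it to.
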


This would be a direct generalization of \cite{GhigginiHonda:twisted}.
Some partial results in this direction are proved in the recent paper
by Patrick Massot \cite{Massot:vanishing}.

Planar torsion carries an additional structure, 
namely the nonnegative integer valued 
\emph{order}~$k$.  This is detected in a natural way by an invariant
defined via Symplectic Field Theory and known as algebraic torsion 
\cite{LatschevWendl}, and it
can be used to prove nonexistence results for exact symplectic
cobordisms.  As Michael Hutchings has explained to me, ECH can also 
detect~$k$, though in a less straightforward way: in
\cite{Hutchings:revisited}, Hutchings defines an integer $J_+(u)$
that can be associated to any finite energy holomorphic curve~$u$ and,
for the particular curves of interest, measures the same information 
as the exponent of $\hbar$ in~SFT.  One can use it to decompose the
ECH differential as $\p = \p_0 + \p_1 + \ldots$, where $\p_k$ counts
curves~$u$ with $J_+(u)=k$, and this leads to a spectral sequence.
The order of planar torsion is then greater than or equal to the 
smallest integer~$k \ge 0$
for which the empty orbit set vanishes on page~$(k+1)$ of this
spectral sequence.

Clearly, the order of planar torsion must be similarly detectable in
Heegaard Floer homology, but as far as this author is aware, nothing
is yet known about this.

\begin{question}
Can Heegaard Floer homology distinguish between two contact manifolds with
vanishing Ozsv\'ath-Szab\'o invariant but differing orders of planar torsion?
Does this imply obstructions to the existence of exact symplectic cobordisms?
\end{question}

It should also be mentioned that in presenting this introduction to planar
torsion, we neither claim nor believe it to be the most general source of
vanishing results for the various invariants under discussion.
For the Ozsv\'ath-Szab\'o invariant, \cite{Massot:vanishing} produces
vanishing results on some Seifert fibered $3$-manifolds that fall under
the umbrella of our Corollary~\ref{cor:examples} and Remark~\ref{remark:Seifert}, 
but also some that do not since
there is no condition requiring the existence of a planar
piece.  It is likely however that the definition of planar torsion can
be generalized by replacing the contact fiber sum with a more general
``plumbing'' construction that produces a notion of
``higher genus binding.''  Among its applications, this should allow a
substantial generalization of Corollary~\ref{cor:examples} that encompasses
all of the examples in \cite{Massot:vanishing}; this will
hopefully appear in a forthcoming paper.

And now the obvious question: what can be done in higher dimensions?
It's interesting to note that both of the crucial topological notions in
this paper, supporting open books and contact fiber sums, make sense for
contact manifolds of arbitrary dimension.

\begin{question}
Can binding sums of supporting open books be used to construct examples
of nonfillable contact manifolds in dimensions greater than three?
\end{question}

As of this writing, I am completely in the dark about this.
One would be tempted toward pessimism in light of the important role
that intersection theory will play in the 
arguments of~\S\ref{sec:openbook}---on the other hand, the literature on
holomorphic curves provides plenty of examples of beautiful geometric
arguments that seem at first to make sense only in dimension four, yet 
generalize to higher dimensions surprisingly well.

\section{Holomorphic summed open books}
\label{sec:openbook}

\subsection{Technical background}
\label{subsec:definitions}

We begin by collecting some definitions and background results on
punctured holomorphic curves that will be important for understanding
the remainder of the paper.

A \emph{stable Hamiltonian structure} on an oriented $3$-manifold $M$ is a
pair $\hH = (\lambda,\omega)$ consisting of a $1$-form $\lambda$ and $2$-form
$\omega$ such that $d\omega = 0$, $\lambda \wedge \omega > 0$ and
$\ker \omega \subset \ker d\lambda$.  Given this data, we define the 
cooriented $2$-plane distribution $\xi = \ker\lambda$ and nowhere vanishing
vector field~$X$,
called the \emph{Reeb vector field}, which is determined by the conditions
$$
\omega(X,\cdot) \equiv 0, \qquad \lambda(X) \equiv 1.
$$
The conditions on $\lambda$ and $\omega$ imply that $\omega|_{\xi}$ gives
$\xi$ the structure of a symplectic vector bundle over~$M$, and this
distribution
with its symplectic structure is preserved by the flow of~$X$.
As an important special case, if $\lambda$ is a contact form, then one
can define a stable Hamiltonian structure in the form 
$\hH = (\lambda, h \ d\lambda)$ for any smooth function 
$h : M \to (0,\infty)$ such that $dh \wedge d\lambda \equiv 0$.  Then
$\xi$ is a positive and cooriented contact structure, and~$X$ is the
usual contact geometric notion of the Reeb vector field: we will often
denote it in this case by~$X_\lambda$, since it is uniquely determined
by~$\lambda$.

For the rest of this section, assume $\hH = (\lambda,\omega)$ is a stable Hamiltonian
structure with the usual attached data $\xi$ and~$X$.  We say that an
almost complex structure $J$ on $\RR \times M$ is \emph{compatible with $\hH$}
if it satisfies the following conditions:
\begin{enumerate}
\item The natural $\RR$-action on $\RR\times M$ preserves~$J$.
\item $J \p_t \equiv X$, where $\p_t$ denotes the unit vector in the 
$\RR$-direction.
\item $J(\xi) = \xi$ and $\omega(\cdot,J\cdot)$ defines a symmetric,
positive definite bundle metric on~$\xi$.
\end{enumerate}
Denote by $\jJ(\hH)$ the (nonempty and contractible) space of almost 
complex structures compatible with~$\hH$.  Note that if $\lambda$ is
contact then $\jJ(\hH)$ depends only on~$\lambda$; we will in this
case say that $J$ is \emph{compatible with $\lambda$}.
 
A periodic orbit
$\gamma$ of~$X$ is determined by the data $(x,T)$, where $x : \RR \to M$
satisfies $\dot{x} = X(x)$ and $x(T) = x(0)$ for some $T > 0$.  We sometimes
abuse notation and identify $\gamma$ with the submanifold $x(\RR) \subset M$,
though technically the period is also part of the data defining~$\gamma$.
If $\tau > 0$ is the smallest positive number for which $x(\tau) = x(0)$, we 
call it the \emph{minimal period} of this orbit, and say that 
$\gamma = (x,\tau)$ is a \emph{simple}, or \emph{simply covered} orbit.  
The \emph{covering multiplicity} of an orbit $(x,T)$ is the
unique integer~$k \ge 1$ such that $T = k\tau$ for a simple orbit $(x,\tau)$.

If $\gamma = (x,T)$ is a periodic orbit and $\varphi_X^t$ 
denotes the flow of~$X$
for time~$t \in \RR$, then the restriction of the linearized flow to
$\xi_{x(0)}$ defines a symplectic isomorphism
$$
(\varphi_X^T)_* : (\xi_{x(0)},\omega) \to (\xi_{x(0)},\omega).
$$
We call $\gamma$ \emph{nondegenerate} if $1$ is not in the spectrum of this map.
More generally, a \emph{Morse-Bott submanifold} of $T$-periodic orbits is a
closed submanifold 
$N \subset M$ fixed by $\varphi_X^T$ such that for any $p \in N$,
$$
\ker \left( (\varphi_X^T)_* - \1 \right) = T_p N.
$$
We will call a single orbit $\gamma = (x,T)$ \emph{Morse-Bott} if it lies on
a Morse-Bott submanifold of $T$-periodic orbits.  Nondegenerate
orbits are clearly also Morse-Bott, with $N \cong S^1$.  We say that the
vector field $X$ is \emph{Morse-Bott} (or \emph{nondegenerate}) if all of its
periodic orbits are Morse-Bott (or nondegenerate respectively).
Since $X$ never vanishes, 
every Morse-Bott submanifold $N \subset M$ of dimension~$2$ is either a
torus or a Klein bottle.  One can show (cf.~\cite{Wendl:automatic}*{Prop.~4.1}) 
that in the former case, if $X$ is Morse-Bott,
then every orbit contained in~$N$ has the same minimal period.

To every orbit $\gamma = (x,T)$, one can associate an \emph{asymptotic
operator}, which is morally the Hessian of a certain 
functional whose critical points
are the periodic orbits.  To write it down, choose $J \in \jJ(\hH)$,
let $\mathbf{x} : S^1 \to M : t \mapsto x(Tt)$,
choose a symmetric connection~$\nabla$ on~$M$ and define
$$
\mathbf{A}_\gamma : \Gamma(\mathbf{x}^*\xi) \to \Gamma(\mathbf{x}^*\xi) :
\eta \mapsto -J (\nabla_t \eta - T \nabla_\eta X).
$$
One can show that this operator is well defined independently of the choice
of connection, and it extends to an unbounded self-adjoint operator on the
complexification of $L^2(\mathbf{x}^*\xi)$, with domain 
$H^1(\mathbf{x}^*\xi)$.  Its spectrum
$\sigma(\mathbf{A}_\gamma)$ consists of real eigenvalues
with multiplicity at most~$2$, which accumulate only at~$\pm\infty$.  
It is straightforward to show that solutions of the equation 
$\mathbf{A}_\gamma\eta = 0$ are given by $\eta(t) = (\varphi_X^{Tt})_*\eta(0)$,
thus $\gamma$ is nondegenerate if and
only if $0 \not\in \sigma(\mathbf{A}_\gamma)$, and in general if $\gamma$ belongs
to a Morse-Bott submanifold $N \subset M$, then
$$
\dim\ker\mathbf{A}_\gamma = \dim N - 1.
$$

Choosing a unitary trivialization $\Phi$ of $(\xi,J,\omega)$ along the 
parametrization
$\mathbf{x} : S^1 \to M$ identifies $\mathbf{A}_\gamma$ with a first-order
differential operator of the form
\begin{equation}
\label{eqn:localAsymptotic}
H^1(S^1,\RR^2) \to L^2(S^1,\RR^2) : \eta \mapsto -J_0 \dot{\eta} - S \eta,
\end{equation}
where $J_0$ denotes the standard complex structure on $\RR^2 = \CC$ and
$S : S^1 \to \End_\RR(\RR^2)$ is a smooth loop of symmetric real $2$-by-$2$
matrices.  Seen in this trivialization, $\mathbf{A}_\gamma \eta = 0$ defines
a linear Hamiltonian equation $\dot{\eta} = J_0 S\eta$ corresponding to the
linearized flow of~$X$ along $\gamma$, thus its flow
defines a smooth family of symplectic matrices
$$
\Psi : [0,1] \to \Spp(2)
$$
for which $1 \not\in \sigma(\Psi(1))$ if and only if~$\gamma$ is nondegenerate.
In this case, the homotopy class of the path $\Psi$ is described by its
\emph{Conley-Zehnder index} $\muCZ(\Psi) \in \ZZ$, which we use to define the
Conley-Zehnder index of the orbit $\gamma$ and of the asymptotic operator
$\mathbf{A}_\gamma$ with respect to the trivialization~$\Phi$,
$$
\muCZ^\Phi(\gamma) := \muCZ^\Phi(\mathbf{A}_\gamma) := \muCZ(\Psi).
$$
Note that in this way, $\muCZ^\Phi(\mathbf{A})$ can be defined for \emph{any}
injective operator $\mathbf{A} : \Gamma(\mathbf{x}^*\xi) \to 
\Gamma(\mathbf{x}^*\xi)$ that takes the form \eqref{eqn:localAsymptotic} in
a local trivialization.  In particular then, even if $\gamma$ is degenerate,
we can pick any $\epsilon \in \RR\setminus \sigma(\mathbf{A}_\gamma)$ and
define the ``perturbed'' Conley-Zehnder index
$$
\muCZ^\Phi(\gamma - \epsilon) := \muCZ^\Phi(\mathbf{A}_\gamma - \epsilon) :=
\muCZ(\Psi_\epsilon),
$$
where $\Psi_\epsilon : [0,1] \to \Spp(2)$ is the path of symplectic matrices
determined by the equation $(\mathbf{A}_\gamma - \epsilon) \eta = 0$ in the
trivialization~$\Phi$.  It is especially convenient to define Conley-Zehnder 
indices in this way for orbits that are degenerate but Morse-Bott: then
the discreteness of the spectrum implies that for sufficiently small
$\epsilon > 0$, the integer $\muCZ^\Phi(\gamma \pm \epsilon)$ depends only
on~$\gamma$, $\Phi$ and the choice of sign.

The eigenfunctions of $\mathbf{A}_\gamma$ are nowhere vanishing sections
$e \in \Gamma(\mathbf{x}^*\xi)$ and thus have well defined winding numbers
$\wind^\Phi(e)$ with respect to any trivialization~$\Phi$.  As shown in
\cite{HWZ:props2}, all sections in the same eigenspace have the same
winding, thus defining a function
$$
\sigma(\mathbf{A}_\gamma) \to \ZZ : \mu \mapsto \wind^\Phi(\mu),
$$
where we set $\wind^\Phi(\mu) := \wind^\Phi(e)$ for any nontrivial
$e \in \ker(\mathbf{A}_\gamma - \mu)$.  In fact, \cite{HWZ:props2} shows
that this function is nondecreasing and surjective: counting 
with multiplicity there are 
exactly two eigenvalues $\mu \in \sigma(\mathbf{A}_\gamma)$ such that
$\wind^\Phi(\mu)$ equals any given integer.  It is thus sensible to
define the integers,
\begin{equation*}
\begin{split}
\alpha^\Phi_-(\gamma - \epsilon) &= \max \{ \wind^\Phi(\mu) \ |\ 
\text{$\mu \in \sigma(\mathbf{A}_\gamma - \epsilon)$, $\mu < 0$} \}, \\
\alpha^\Phi_+(\gamma - \epsilon)  &= \min \{ \wind^\Phi(\mu) \ |\ 
\text{$\mu \in \sigma(\mathbf{A}_\gamma - \epsilon)$, $\mu > 0$} \}, \\
p(\gamma - \epsilon)      &= \alpha^\Phi_+(\gamma - \epsilon) 
- \alpha^\Phi_-(\gamma - \epsilon).
\end{split}
\end{equation*}
Note that the \emph{parity} $p(\gamma - \epsilon)$ does not depend on~$\Phi$,
and it always equals either~$0$ or~$1$ if
$\epsilon \not\in\sigma(\mathbf{A}_\gamma)$.  In this case, the Conley-Zehnder
index can be computed as
\begin{equation}
\label{eqn:CZwinding}
\muCZ^\Phi(\gamma - \epsilon) = 2\alpha^\Phi_-(\gamma - \epsilon) + p(\gamma - \epsilon) =
2\alpha^\Phi_+(\gamma - \epsilon) - p(\gamma - \epsilon).
\end{equation}

Given $\hH = (\lambda,\omega)$ and $J \in \jJ(\hH)$, fix $c_0 > 0$
sufficiently small so that $(\omega + c\ d\lambda)|_\xi > 0$ for all
$c \in [-c_0,c_0]$, and define
$$
\tT = \{ \varphi \in C^\infty(\RR,(-c,c)) \ |\ \varphi' > 0 \}.
$$
For $\varphi \in \tT$, we can define a symplectic form on $\RR \times M$ by
\begin{equation}
\label{eqn:omegaphi}
\omega_\varphi = \omega + d(\varphi\lambda),
\end{equation}
where $\omega$ and $\lambda$ are pulled back through the projection
$\RR\times M \to M$ to define differential forms on $\RR\times M$, and
$\varphi : \RR \to (-c,c)$ is extended in the natural way to a function
on $\RR\times M$.  Then any $J \in \jJ(\hH)$ is compatible with 
$\omega_\varphi$ in the
sense that $\omega_\varphi(\cdot,J\cdot)$ defines a Riemannian metric
on $\RR\times M$.  We therefore consider punctured pseudoholomorphic curves
$$
u : (\dot{\Sigma},j) \to (\RR \times M,J)
$$
where $(\Sigma,j)$ is a closed Riemann surface with a finite subset of
punctures $\Gamma \subset \Sigma$, $\dot{\Sigma} := \Sigma\setminus\Gamma$,
and $u$ is required to satisfy the \emph{finite energy} condition
\begin{equation}
\label{eqn:energy}
E(u) := \sup_{\varphi \in \tT} \int_{\dot{\Sigma}} u^*\omega_\varphi < \infty.
\end{equation}
An important example is the following: for any closed Reeb orbit
$\gamma = (x,T)$, the map
$$
u_\gamma : \RR\times S^1 \to \RR\times M : (s,t) \mapsto (Ts,x(Tt))
$$
is a finite energy $J$-holomorphic cylinder (or equivalently punctured plane), 
which we call the \emph{trivial cylinder} over~$\gamma$.  More generally,
we are most interested in punctured $J$-holomorphic curves $u : \dot{\Sigma} \to
\RR\times M$ that are asymptotically cylindrical, in the following sense.
Define the standard half cylinders
$$
Z_+ = [0,\infty) \times S^1 \quad \text{ and } \quad Z_- = (-\infty,0] \times S^1.
$$
We say that a smooth map $u : \dot{\Sigma} \to \RR\times M$ is
\emph{asymptotically cylindrical} if the punctures can be partitioned into positive
and negative subsets
$$
\Gamma = \Gamma^+ \cup \Gamma^-
$$
such that for each $z \in \Gamma^\pm$, there is a Reeb orbit 
$\gamma_z = (x,T)$, a 
closed neighborhood $\uU_z \subset \Sigma$ of~$z$ and a diffeomorphism
$\varphi_z : Z_\pm \to \uU_z \setminus \{z\}$ 
such that for sufficiently large $|s|$,
\begin{equation}
\label{eqn:asympCyl}
u \circ \varphi_z(s,t) = \exp_{(Ts,x(Tt))} h_z(s,t),
\end{equation}
where $h_z$ is a section of $\xi$ along $u_{\gamma_z}$ with
$h_z(s,t) \to 0$ for $s \to \pm\infty$, and the exponential map is
defined with respect to any choice of $\RR$-invariant connection on 
$\RR\times M$.  We often refer to the punctured neighborhoods 
$\uU_z \setminus \{z\}$ or their images
in $\RR\times M$ as the positive and negative \emph{ends} of~$u$, and
we call $\gamma_z$ the \emph{asymptotic orbit} of~$u$ at~$z$.

\begin{defn}
\label{defn:totalMultiplicity}
Suppose $N\subset M$ is a submanifold which is the union of a family of
Reeb orbits that all have the same minimal period.  Consider an asymptotically
cylindrical map $u : \dot{\Sigma} \to \RR\times M$ with punctures
$\Gamma^+ \cup \Gamma^- \subset \Sigma$ and corresponding asymptotic orbits
$\gamma_z$ with covering multiplicities~$k_z \ge 1$ for each 
$z \in \Gamma^\pm$.  Then if $k_N^\pm \ge 0$ denotes the sum of the
multiplicities $k_z$ for all punctures $z \in \Gamma^\pm$ at which
$\gamma_z$ lies in~$N$, we shall say that~$u$ approaches~$N$ with
\emph{total multiplicity}~$k_N^\pm$ at its positive or negative ends
respectively.
\end{defn}

Every asymptotically cylindrical map defines a \emph{relative homology
class} in the following sense.  Suppose $\boldsymbol{\gamma} =
\{ (\gamma_1,m_1),\ldots,(\gamma_N,m_N) \}$ is an \emph{orbit set},
i.e.~a finite collection of distinct simply covered Reeb orbits
$\gamma_i$ paired with positive integers~$m_i$.  This defines a
$1$-dimensional submanifold of~$M$,
$$
\bar{\boldsymbol{\gamma}} = \gamma_1 \cup \ldots \cup \gamma_N,
$$
together with homology classes
$$
[\boldsymbol{\gamma}] = m_1 [\gamma_1] + \ldots + m_N [\gamma_N]
$$
in both $H_1(M)$ and $H_1(\bar{\boldsymbol{\gamma}})$.  Given two
orbit sets $\boldsymbol{\gamma}^+$ and $\boldsymbol{\gamma}^-$ with
$[\boldsymbol{\gamma}^+] = [\boldsymbol{\gamma}^-] \in H_1(M)$,
denote by $H_2(M,\boldsymbol{\gamma}^+ - \boldsymbol{\gamma}^-)$ the
affine space over $H_2(M)$ consisting of equivalence classes of
$2$-chains $C$ in~$M$ with boundary $\p C$ in 
$\bar{\boldsymbol{\gamma}}^+ \cup \bar{\boldsymbol{\gamma}}^-$
representing the homology class $[\boldsymbol{\gamma}^+] - 
[\boldsymbol{\gamma}^-] \in
H_1(\bar{\boldsymbol{\gamma}}^+ \cup \bar{\boldsymbol{\gamma}}^-)$,
where $C \sim C'$ whenever $C - C'$ is the boundary of a $3$-chain in~$M$.
Now, the projection of any asymptotically cylindrical map
$u : \dot{\Sigma} \to \RR\times M$ to~$M$ can be extended as a continuous
map from a compact surface with boundary (the circle compactification of
$\dot{\Sigma}$) to~$M$, which then represents a relative homology class
$$
[u] \in H_2(M , \boldsymbol{\gamma}^+ - \boldsymbol{\gamma}^-)
$$
for some unique choice of orbit sets $\boldsymbol{\gamma}^+$ 
and~$\boldsymbol{\gamma}^-$.

As is well known 
(cf.~\cites{Hofer:weinstein,HWZ:props1,HWZ:props4}), 
every finite energy $J$-holomorphic curve
with nonremovable punctures is asymptotically cylindrical if the
contact form is Morse-Bott.  Moreover in this case, 
the section $h_z$ in \eqref{eqn:asympCyl},
which controls the asymptotic approach of $u$ to~$\gamma_z$ at 
$z \in \Gamma^\pm$, either is identically zero or 
satisfies a formula of the form\footnote{The asymptotic formula
\eqref{eqn:asympFormula} is a stronger version of a somewhat more complicated
formula originally proved in \cites{HWZ:props1,HWZ:props4}.  The stronger
version is proved in \cite{Mora}, and another exposition is given
in \cite{Siefring:asymptotics}.}
\begin{equation}
\label{eqn:asympFormula}
h_z(s,t) = e^{\mu s} (e_\mu(t) + r(s,t)),
\end{equation}
where $\mu \in \sigma(\mathbf{A}_\gamma)$ with $\pm\mu < 0$, 
$e_\mu$ is a nontrivial eigenfunction in the $\mu$-eigenspace, and the remainder term
$r(s,t) \in \xi_{x(Tt)}$ decays to zero as $s \to \pm\infty$.  It follows that unless
$h_z \equiv 0$, which is true only if $u$ is a cover of a trivial cylinder,
$u$ has a well defined \emph{asymptotic winding} about $\gamma_z$,
$$
\wind_z^\Phi(u) := \wind^\Phi(e_\mu),
$$
which is necessarily either bounded from above by $\alpha^\Phi_-(\gamma_z)$ or
from below by $\alpha^\Phi_+(\gamma_z)$, depending on the sign $z \in \Gamma^\pm$.
We say that this winding is \emph{extremal} whenever the bound is not strict.

Denote by $\mM(J)$ the moduli space of unparametrized finite energy punctured
$J$-holomorphic curves in $\RR \times M$: this consists of equivalence
classes of tuples $(\Sigma,j,\Gamma,u)$, where $\dot{\Sigma} = \Sigma\setminus
\Gamma$ is the domain of a pseudoholomorphic curve $u : (\dot{\Sigma},j) \to
(\RR\times M, J)$, and we define $(\Sigma,j,\Gamma,u) 
\sim (\Sigma',j',\Gamma',u')$
if there is a biholomorphic map $\varphi : (\dot{\Sigma},j) \to 
(\dot{\Sigma}',j')$ such that 
$u = u' \circ \varphi$.  We assign to $\mM(J)$ the natural topology defined
by $C^\infty_{\text{loc}}$-convergence on $\dot{\Sigma}$ and $C^0$-convergence
up to the ends.  It is often convenient to 
abuse notation by writing equivalence classes
$[(\Sigma,j,\Gamma,u)] \in \mM(J)$ simply as~$u$ when 
there is no danger of confusion.

If $u \in \mM(J)$ has asymptotic orbits $\{ \gamma_z \}_{z \in \Gamma}$
that are all Morse-Bott,
then a neighborhood of~$u$ in $\mM(J)$ can be described as the zero set of
a Fredholm section of a Banach space bundle (see e.g.~\cite{Wendl:automatic}).  
We say that $u$ is \emph{Fredholm regular} if this section has a surjective
linearization at~$u$, in which case a neighborhood of~$u$ in $\mM(J)$ is
a smooth finite dimensional orbifold.  Its dimension is then equal to its
\emph{virtual dimension}, which is given by the \emph{index} of~$u$,
\begin{equation}
\label{eqn:index}
\ind(u) := -\chi(\dot{\Sigma}) + 2 c_1^\Phi(u^*\xi) + \sum_{z \in \Gamma^+}
\muCZ^\Phi(\gamma_z - \epsilon) - \sum_{z \in \Gamma^-}
\muCZ^\Phi(\gamma_z + \epsilon),
\end{equation}
where $\epsilon > 0$ is any small positive number, $\Phi$ is an arbitrary
choice of unitary trivialization of $\xi$ along all the asymptotic orbits
$\gamma_z$, and $c_1^\Phi(u^*\xi)$ denotes the relative first Chern number
of the complex line bundle $(u^*\xi,J) \to \dot{\Sigma}$, computed by
counting the zeroes of a generic section of $u^*\xi$ that is nonzero and
constant at infinity with respect to~$\Phi$.  We say that an almost
complex structure $J \in \jJ(\hH)$ is \emph{Fredholm regular} if all
somewhere injective curves in $\mM(J)$ are Fredholm regular.
As shown in \cites{Dragnev} or the appendix of \cite{Bourgeois:homotopy}, 
the set of
Fredholm regular almost complex structures is of second category in $\jJ(\hH)$;
one therefore often refers to them as \emph{generic} almost complex structures.

It is sometimes convenient to have an alternative formula for $\ind(u)$ in
the case where $u$ is immersed.  Indeed, the linearization of the Fredholm
operator that describes $\mM(J)$ near~$u$ acts on the space of sections
of $u^*T(\RR\times M)$, which then splits naturally as $T\dot{\Sigma} \oplus
N_u$, where $N_u \to \dot{\Sigma}$ is the normal bundle, defined so that it
matches~$\xi$ at the asymptotic ends of~$u$.  As explained e.g.~in
\cite{Wendl:automatic}, the restriction of the 
linearization to $N_u$ defines a linear Cauchy-Riemann type operator
$$
\mathbf{D}_u^N : \Gamma(N_u) \to \Gamma(\overline{\Hom}_\CC(T\dot{\Sigma},N_u)),
$$
called the \emph{normal Cauchy-Riemann operator} at~$u$, and
the Fredholm index of this operator is precisely
$\ind(u)$.  Thus whenever $u$ is immersed, we can compute $\ind(u)$ directly
from the punctured version of the Riemann-Roch formula proved in
\cite{Schwarz}:
\begin{equation}
\label{eqn:normalCR}
\ind(\mathbf{D}_u^N) = \chi(\dot{\Sigma}) + 2 c_1^\Phi(N_u) + 
\sum_{z \in \Gamma^+} \muCZ^\Phi(\gamma_z - \epsilon) -
\sum_{z \in \Gamma^-} \muCZ^\Phi(\gamma_z + \epsilon).
\end{equation}

Finally, let us briefly summarize the intersection theory of punctured
$J$-holomorphic curves introduced by R.~Siefring \cite{Siefring:intersection}.
Given any asymptotically cylindrical smooth 
maps $u : \dot{\Sigma} \to \RR \times M$
and $v : \dot{\Sigma}' \to \RR\times M$, there is a symmetric pairing
$$
u * v \in \ZZ
$$
with the following properties:
\begin{enumerate}
\item $u * v$ depends only on the asymptotic orbits of $u$ and~$v$ and the
relative homology classes $[u]$ and $[v]$.
\item If $u$ and $v$ represent curves in $\mM(J)$ with non-identical 
images, then their algebraic count of intersections $u \cdot v$ satisfies
$0 \le u \cdot v \le u * v$.  In particular, $u * v = 0$ implies that $u$ and
$v$ never intersect.
\end{enumerate}
The first property amounts to homotopy invariance: it implies that
$u_0 * v = u_1 * v$ whenever $u_0$ and $u_1$ are connected to each other
by a continuous family of curves $u_\tau \in \mM(J)$ with fixed asymptotic
orbits.
The second property gives a \emph{sufficient} condition for two curves to
have disjoint images, but this condition is not in general \emph{necessary}:
sometimes one may have $0 = u \cdot v < u * v$ if $u$ and $v$ have
an asymptotic orbit in common, and one must then expect intersections to
emerge from infinity under generic perturbations.  The number $u * v$
can also be defined when $u$ and~$v$ are holomorphic \emph{buildings} in
the sense of \cite{SFTcompactness}, so that it satisfies a similar
continuity property under convergence of curves to buildings.  The
computation of $u*v$ is then a sum of the intersection numbers between
corresponding levels, plus some additional nonnegative terms that count
``hidden'' intersections at the breaking orbits.

\begin{remark}
\label{remark:MorseBott}
The version of homotopy invariance described above assumes that~$u$
and~$v$ vary as asymptotically cylindrical maps with \emph{fixed}
asymptotic orbits, but if any of the orbits belong to Morse-Bott families,
one can define an alternative version of $u*v$ that permits the orbits to
move continuously.  This more general theory is developed in 
\cite{SiefringWendl} and sketched in the last section of 
\cite{Wendl:automatic}.  In general, the intersection number defined in this
way is greater than or equal to $u * v$, because it counts additional
nonnegative contributions for intersections that may emerge from infinity
as the asymptotic orbits move.  It's useful to observe however that in the
situation we will consider,
both versions agree: in particular, if $u$ and~$v$ are disjoint curves with
$u * v = 0$ and a common positive asymptotic orbit that is (for both
curves) simply covered and belongs to a Morse-Bott torus that doesn't 
intersect the images of~$u$ and~$v$, then no new intersections can appear
under a perturbation that moves the orbit (independently for both curves).
This follows from an easy computation of asymptotic winding numbers using
the definitions given in \cite{Wendl:automatic}.
\end{remark}

Similarly, if $u \in \mM(J)$ is somewhere injective, one can define the
integer $\delta(u) \ge 0$, which algebraically counts the self-intersections
of~$u$ after perturbing away its critical points, but in the punctured case
this need not be homotopy invariant.  One fixes this by introducing the
\emph{asymptotic contribution} $\delta_\infty(u) \in \ZZ$, which is also
nonnegative and counts ``hidden'' self-intersections that may emerge
from infinity under generic perturbations.  We then have
$$
0 \le \delta(u) \le \delta(u) + \delta_\infty(u),
$$
and the punctured version of the adjunction formula takes the form
\begin{equation}
\label{eqn:adjunction}
u * u = 2\left[\delta(u) + \delta_\infty(u)\right] + c_N(u) +
\left[ \bar{\sigma}(u) - \#\Gamma \right],
\end{equation}
where $\bar{\sigma}(u)$ is an integer that depends only on the
asymptotic orbits and satisfies $\bar{\sigma}(u) \ge \#\Gamma$, 
and $c_N(u)$ is the \emph{constrained normal Chern number}, 
which can be defined as\footnote{The version of $c_N(u)$ defined in
\eqref{eqn:cN} is adapted to the condition that homotopies in $\mM(J)$ are
required to fix asymptotic orbits.  A more general definition 
is given in \cite{Wendl:automatic} (see also Remark~\ref{remark:MorseBott}).}
\begin{equation}
\label{eqn:cN}
c_N(u) = c_1^\Phi(u^*\xi) - \chi(\dot{\Sigma}) +
\sum_{z \in \Gamma^+} \alpha^\Phi_-(\gamma_z + \epsilon) -
\sum_{z \in \Gamma^-} \alpha^\Phi_+(\gamma_z - \epsilon).
\end{equation}
Observe that $c_N(u)$ also depends only on the asymptotic orbits
$\{\gamma_z\}_{z \in \Gamma}$ and the relative homology class~$[u]$.

\subsection{An existence and uniqueness theorem}
\label{subsec:bigTheorem}

We now prove a theorem on holomorphic open books which lies in the
background of all the results that were stated in~\S\ref{sec:intro}.
The setup is as follows.  
Assume $(M',\xi)$ is a closed $3$-manifold with a positive, cooriented
contact structure, and it contains a compact $3$-dimensional submanifold
$M \subset M'$, possibly with boundary, on which $\xi$ is supported by a 
partially planar blown up summed open book
$$
\check{\boldsymbol{\pi}} = (\check{\pi},\check{\varphi},\check{N}).
$$
We will denote its binding and interface by~$B$ and~$\iI$ respectively,
and denote the induced fibration by
$$
\pi : M \setminus (B \cup \iI) \to S^1.
$$
Denote the irreducible subdomains by $M_i$ for $i=0,\ldots,N$, so
$$
M = M_0 \cup M_1 \cup \ldots \cup M_N
$$
for some $N \ge 0$.  If $B_i$ and $\iI_i$ denote the intersections of
$B$ and $\iI$ respectively with the interior of~$M_i$, then the restriction
of~$\pi$ to the interior of $M_i \setminus (B_i\cup \iI_i)$ extends smoothly
to its boundary as a fibration
$$
\pi_i : M_i \setminus (B_i \cup \iI_i) \to S^1.
$$
Denote by $g_i \ge 0$ the genus of the fibers of~$\pi_i$, and assume
without loss of generality that $M_0$ is a planar piece, thus
$g_0 = 0$ and $M_0 \cap \p M = \emptyset$; in particular $\p M_0 \subset \iI$.

\begin{defn}
\label{defn:subordinate}
Given the above setup and an almost complex structure~$J$ compatible with
some contact form on $(M',\xi)$, we shall say that a finite energy
$J$-holomorphic curve $u : \dot{\Sigma} \to \RR\times M'$ is 
\emph{subordinate to~$\pi_0$} if all of the following 
conditions hold:
\begin{itemize}
\item $u$ is not a cover of a trivial cylinder,
\item All positive ends of~$u$ approach Reeb orbits
in $B_0 \cup \iI_0 \cup \p M_0$,
\item At its positive ends, $u$ approaches each connected component
of $B_0 \cup \p M_0$ with total multiplicity at most~$1$, and each
connected component of~$\iI_0$ with total multiplicity at most~$2$.
\end{itemize}
\end{defn}

\begin{thm}
\label{thm:openbook}
For any numbers $\tau_0 > 0$ and $m_0 \in \NN$,
the contact manifold $(M',\xi)$ with subdomain $M \subset M'$ carrying the
blown up summed open book~$\check{\boldsymbol{\pi}}$ described above
admits a Morse-Bott contact
form $\lambda$ and compatible Fredholm regular almost complex structure $J$
with the following properties.
\begin{enumerate}
\item The contact structure $\ker\lambda$ is isotopic to~$\xi$.
\item On $M$, $\lambda$ is a Giroux form for~$\check{\boldsymbol{\pi}}$.
\item The components of $\iI \cup \p M$ are all Morse-Bott submanifolds, while
the Reeb orbits in $B$ are nondegenerate and elliptic, and their covers
for all multiplicities up to~$m_0$ have Conley-Zehnder index~$1$ with
respect to the natural trivialization determined by the pages.
\item All Reeb orbits in $B_0 \cup \iI_0 \cup \p M_0$ have minimal period at
most~$\tau_0$, while every other closed orbit of $X_\lambda$
in~$M'$ has minimal period at least~$1$.
\item For each component $M_i$ with $g_i = 0$, the fibration
$\pi_i : M_i \setminus (B_i \cup \iI_i) \to S^1$ 
admits a $C^\infty$-small perturbation
$\hat{\pi}_i : M_i \setminus (B_i \cup \iI_i) \to S^1$ such that the 
interior of each
fiber $\hat{\pi}_i^{-1}(\tau)$ for $\tau \in S^1$ lifts uniquely to an
$\RR$-invariant family of properly embedded surfaces
$$
S^{(i)}_{\sigma,\tau} \subset \RR \times M_i,
\qquad (\sigma,\tau) \in \RR\times S^1,
$$
which are the images of embedded finite energy $J$-holomorphic curves
$$
u^{(i)}_{\sigma,\tau} = (a^{(i)}_\tau + \sigma, F^{(i)}_\tau)
: \dot{\Sigma}_i \to \RR \times M_i,
$$
all of them Fredholm regular with index~$2$, and with only positive ends.
\item Every finite energy $J$-holomorphic curve subordinate to~$\pi_0$
pa\-ra\-me\-trizes one of the planar 
surfaces $S^{(i)}_{\sigma,\tau}$ described above.
\end{enumerate}
\end{thm}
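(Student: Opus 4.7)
The plan is to construct $\lambda$ and $J$ in stages, building first on a model where the pages lift tautologically to holomorphic curves, and then using positivity of intersections together with Siefring's asymptotic formulas to deduce the uniqueness statement.

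First, I would build a model stable Hamiltonian structure $\hH = (\lambda_0, \omega_0)$ on $M$ adapted to $\check{\boldsymbol{\pi}}$, generalizing the construction in \cite{Wendl:openbook}. On each irreducible subdomain $M_i$, use coordinates adapted to $\pi_i$ so that away from $B_i \cup \iI_i \cup \p M_i$ the $2$-form $\omega_0$ is the pullback of a positive volume form on $S^1$ via $\pi_i$ and $\lambda_0$ restricts on each page to a primitive of $d\lambda_0$; near the binding and interface/boundary tori, choose the model as in \cite{Wendl:openbook} so that $\omega_0$ is exact (a crucial feature since it will ensure the pages have uniformly controlled energy). Glue across interface tori so that the induced characteristic foliations match. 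This $\hH$ is stable with Reeb vector field positively transverse to the pages, binding orbits elliptic, and interface/boundary orbits arranged in Morse-Bott tori. Extend $\hH$ to the complement $M' \setminus M$ so that the resulting $1$-form is contact with kernel isotopic to $\xi$; this can be done by the uniqueness of supported contact structures (Proposition \ref{prop:GirouxUniqueness}). A small perturbation $\lambda = \lambda_0 + \text{(correction)}$ then turns $\hH$ into a genuine contact form satisfying properties (1)--(4), with the Conley-Zehnder indices of the multiply covered binding orbits equal to $1$ up to multiplicity $m_0$ arranged by choosing the rotation numbers of the perturbation appropriately, and with the prescribed upper bound $\tau_0$ on minimal periods near $B_0 \cup \iI_0 \cup \p M_0$ and lower bound $1$ elsewhere, obtained by rescaling in a neighborhood of these Reeb orbits.

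Next, choose $J \in \jJ(\hH)$ compatible with the model stable Hamiltonian structure so that the pages of $\pi_i$ are all $J$-holomorphic sections; this is possible because $\omega_0$ vanishes on the page direction and $J$ sends the $\p_t + X$ direction to itself. The same $J$ extends across the contact perturbation, and the pages in each $M_i$ lift to embedded finite energy $J$-holomorphic curves $u^{(i)}_{\sigma, \tau}$ forming $\RR$-invariant foliations, parametrized after a small perturbation $\hat{\pi}_i$ of the fibration so as to make them smoothly parametrized in $\tau \in S^1$. Each such curve has only positive ends (at $B_i \cup \iI_i \cup \p M_i$), and a computation using \eqref{eqn:index} with the CZ indices fixed by the Giroux form shows $\ind(u^{(i)}_{\sigma,\tau}) = 2$ when $g_i = 0$. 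Fredholm regularity then follows from the automatic transversality criterion for immersed index $2$ curves with only positive ends in the genus zero case (see \cite{Wendl:automatic}), establishing (5).

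The heart of the argument is (6). Given a finite energy $J$-holomorphic curve $u : \dot\Sigma \to \RR \times M'$ subordinate to $\pi_0$, I compute the Siefring intersection number $u * u^{(0)}_{\sigma, \tau}$. By homotopy invariance, this number depends only on the asymptotic data and relative homology class. The asymptotic multiplicity constraints in Definition \ref{defn:subordinate} together with the Morse-Bott version of the intersection pairing (Remark \ref{remark:MorseBott}) and the extremal winding estimates at the elliptic binding and hyperbolic/torus orbits produced by the construction allow one to show $u * u^{(0)}_{\sigma, \tau} = 0$. Then positivity of intersections forces $u$ to have image disjoint from every $u^{(0)}_{\sigma, \tau}$ unless its image coincides with one of them. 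Since the family $\{ u^{(0)}_{\sigma, \tau} \}$ foliates the complement of $B_0 \cup \iI_0 \cup \p M_0$ in $\RR \times M_0$, and the image of $u$ must have nontrivial intersection with this region (its positive asymptotics lie on the boundary of this region, so by the asymptotic formula \eqref{eqn:asympFormula} the image enters $\RR\times M_0$), we conclude that $u$ parametrizes one of the $S^{(0)}_{\sigma, \tau}$. The main obstacle will be the intersection count: one must carefully analyze the asymptotic contributions $\delta_\infty$, particularly at the doubly-covered interface orbits, and rule out the possibility of ``hidden'' intersections at infinity; here Siefring's formulas in combination with the extremality of the asymptotic windings forced by the Morse-Bott model do the necessary bookkeeping.
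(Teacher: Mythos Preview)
Your construction of the contact form, the stable Hamiltonian model, and the holomorphic lifts of pages is essentially correct and follows the same path as the paper.  The gap is in your argument for~(6).

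Your intersection argument only works for curves that actually enter $\RR\times M_0$.  If $u$ has a positive end at an orbit in $B_0$ or $\iI_0$, this is automatic since those sets lie in the interior of~$M_0$.  But if \emph{all} positive ends of $u$ lie on $\p M_0$, there is no reason at all for $u$ to enter $\RR\times M_0$: the asymptotic formula~\eqref{eqn:asympFormula} tells you the decay rate and the winding of the leading eigenfunction, but it does not tell you from which side of the Morse-Bott torus the curve approaches.  Such a curve could live entirely in the padding, where for the perturbed Fredholm regular~$J$ there is \emph{no} holomorphic foliation to intersect with (the pages there may have positive genus, hence index $2-2g\le 0$, and do not survive the perturbation).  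So the computation $u * u^{(0)}_{\sigma,\tau}=0$ gives you nothing in this case.

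The paper's resolution of this is the main technical point of the whole proof, and it is a genuinely different mechanism from a direct intersection count.  One works not with a single $J$ but with a family $J_{\epsilon_k}\to J_0$, where $J_0$ is compatible with a non-contact stable Hamiltonian structure (a confoliation) for which pages of \emph{every} genus lift to $J_0$-holomorphic curves, so the $J_0$-foliation covers all of~$M$.  Given a hypothetical counterexample $v_k$ for $J_{\epsilon_k}$, one extracts a $J_0$-holomorphic limit via a nontrivial compactness argument: uniform energy bounds come from exactness of the taming form, a uniform \emph{genus} bound comes from Taubes's currents compactness combined with the adjunction formula, and then SFT compactness applies (with some care, since $X_0$ is not globally Morse-Bott).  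The intersection argument now works against the $J_0$-foliation and forces the limit to be a page of some~$M_i$; if $g_i>0$ this page has $\ind = 2-2g_i\le 0$, contradicting the Fredholm regularity of $J_{\epsilon_k}$, while if $g_i=0$ the implicit function theorem forces $v_k$ to be one of the planar pages for large~$k$.
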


\begin{remark}
The uniqueness statement above suffices for our applications, but one
could also generalize it in various directions, 
e.g.~there is an obvious stronger result if 
$\check{\boldsymbol{\pi}}$ has multiple planar subdomains.  One could
also weaken the definition of ``subordinate to $\pi_0$'' by allowing
multiply covered orbits in~$B_0 \cup \iI_0$ 
up to some arbitrarily large (but finite) bound on the multiplicity.
Some statement allowing for multiply covered orbits at $\p M_0$ is
probably also possible, but trickier to prove due to the appearance of
multiply covered holomorphic curves in the limit of 
Lemma~\ref{lemma:compactness} below.
\end{remark}

In addition to the applications treated in
\S\ref{sec:proofs}, Theorem~\ref{thm:openbook} implies a wide range of 
existence results
for finite energy foliations, e.g.~it could be used to reduce the construction
in \cite{Wendl:OTfol} to a few lines,
after observing that every overtwisted contact structure is supported by
a variety of summed open books with only planar pages.
The proof of the theorem will occupy
the remainder of~\S\ref{sec:openbook}.

\subsubsection*{A family of stable Hamiltonian structures}

The first step in the proof is to construct a specific almost
complex structure on $\RR\times M$ for which all pages 
of~$\check{\boldsymbol{\pi}}$ admit holomorphic lifts.  
We will follow the approach in
\cite{Wendl:openbook} and refer to the latter for details in a few places
where no new arguments are required.  The idea is to present each subdomain
$M_i$ as an abstract open book that supports a stable Hamiltonian
structure which is contact near $B \cup \iI \cup \p M$ 
and integrable elsewhere.

We must choose suitable coordinate systems near each component of
the binding, interface and boundary.  Choose $r > 0$ and let
$\DD_r \subset \RR^2$ denote the closed disk of radius~$r$.
For each binding circle $\gamma \subset B$, choose a small
tubular neighborhood $\nN(\gamma)$ and identify it with the solid torus
$S^1 \times \DD_r$ with coordinates $(\theta,\rho,\phi)$, where
$(\rho,\phi)$ denote polar coordinates on $\DD_r$.
If $r$ is sufficiently small then
we can arrange these coordinates so that the following conditions
are satisfied:
\begin{itemize}
\item $\gamma = S^1 \times \{0\}$, with the natural orientation of $S^1$
matching the coorientation of $\xi$ along~$\gamma$
\item $\pi(\theta,\rho,\phi) = \phi$ on $\nN(\gamma) \setminus \gamma$
\item $\xi = \ker ( d\theta + \rho^2 \ d\phi)$
\end{itemize}

Similarly, for each connected component $T \subset \p M$, let
$\widehat{\nN(T)} \subset M'$ denote a neighborhood that is split into
two connected components by~$T$, and denote 
$\nN(T) = \widehat{\nN(T)} \cap M$.  Identify $\widehat{\nN(T)}$ with
$S^1 \times [-r,r] \times S^1$ with coordinates $(\theta,\rho,\phi)$
such that:
\begin{itemize}
\item $\nN(T) = S^1 \times [0,r] \times S^1$
\item For each $\phi_0 \in S^1 $ the oriented loop
$S^1 \times \{0,\phi_0\}$ in~$T$ is positively transverse to~$\xi$
\item $\pi(\theta,\rho,\phi) = \phi$ on $\nN(T)$
\item $\xi = \ker (d\theta + \rho\ d\phi)$
\end{itemize}

Finally, we choose two coordinate systems for neighborhoods $\nN(T)$
of each interface torus $T \subset \iI$, assuming that $T$ divides $\nN(T)$
into two connected components 
$$
\nN(T) \setminus T = \nN_+(T) \cup \nN_-(T).
$$
Choose an identification of $\nN(T)$ with $S^1 \times [-r,r] \times S^1$
and denote the resulting coordinates by 
$(\theta_+,\rho_+,\phi_+)$, which we arrange to have the following
properties:
\begin{itemize}
\item $T = S^1 \times \{0\} \times S^1$, $\nN_+(T) =
S^1 \times (0,r] \times S^1$ and $\nN_-(T) = S^1 \times [-r,0) \times S^1$
\item For each $\phi_0 \in S^1$ the oriented loop
$S^1 \times \{0,\phi_0\}$ in~$T$ is positively transverse to~$\xi$
\item $\pi(\theta_+,\rho_+,\phi_+) = \phi_+$ on $\nN_+(T)$ and
$\pi(\theta_+,\rho_+,\phi_+) = -\phi_+ + c$ on $\nN_-(T)$ for some
constant $c \in S^1$
\item $\xi = \ker (d\theta_+ + \rho_+\ d\phi_+)$
\end{itemize}
Given these coordinates, it is natural to define a second coordinate
system $(\theta_-,\rho_-,\phi_-)$ by
\begin{equation}
\label{eqn:plusMinus}
(\theta_-,\rho_-,\phi_-) = (\theta_+,-\rho_+,-\phi_+ + c).
\end{equation}
Then the coordinates $(\theta_-,\rho_-\phi_-)$ satisfy minor variations
on the properties listed above: in particular
$\xi = \ker ( d\theta_- + \rho_- \ d\phi_-)$ and
$\pi(\theta_-,\rho_-,\phi_-) = \phi_-$ on $\nN_-(T)$.  In the following,
we will use separate coordinates on the two components of $\nN(T) \setminus T$,
denoting both by $(\theta,\rho,\phi)$:
$$
(\theta,\rho,\phi) :=
\begin{cases}
(\theta_+,\rho_+,\phi_+) & \text{ on $\nN_+(T)$},\\
(\theta_-,\rho_-,\phi_-) & \text{ on $\nN_-(T)$}.
\end{cases}
$$
Then $\pi(\theta,\rho,\phi) = \phi$ and
$\xi = \ker(d\theta + \rho\ d\phi)$ everywhere on $\nN(T) \setminus T$.
Observe that these coordinates on $\nN_+(T)$ or $\nN_-(T)$ separately can
be extended smoothly to the closures
$\overline{\nN_+(T)}$ and $\overline{\nN_-(T)}$, though in particular
the two $\phi$-coordinates are different where they overlap at~$T$.

\begin{notation}
For any open and closed subset $N \subset B \cup \iI \cup \p M$, we shall
in the following denote by $\nN(N)$ the union of all the neighborhoods
$\nN(\gamma)$ and $\nN(T)$ constructed above for the connected components
$\gamma,T \subset N$.  Thus for example,
$$
\nN(B \cup \iI \cup \p M)
$$
denotes the union of all of them.
\end{notation}

The complement $M \setminus \nN(B \cup \iI \cup \p M)$ 
is diffeomorphic to a mapping
torus.  Indeed, let $P$ denote the closure of 
$\pi^{-1}(0) \cap (M \setminus \nN(B \cup \iI \cup \p M))$, a compact
surface whose boundary components are in one to one correspondence with
the connected components of $\nN(B \cup \iI \cup \p P) \setminus \iI$.
The monodromy map of the fibration $\pi$ defines a diffeomorphism
$\psi : P \to P$, which preserves connected components and without loss of
generality has support away from~$\p P$, so we define the mapping torus
$$
P_\psi = (\RR \times P) / \sim,
$$
where $(t + 1,p) \sim (t,\psi(p))$.  This comes with a natural fibration
$\phi : P_\psi \to S^1$ which is trivial near the boundary, so for a
sufficiently small collar neighborhood $\uU \subset P$ of $\p P$, a
neighborhood of $\p P_\psi$ can be identified with $S^1 \times \uU$.  
Choose positively oriented coordinates on each connected component
of~$\uU$
$$
(\theta,\rho) : \uU \to [r - \delta,r + \delta) \times S^1
$$
for some small $\delta > 0$.  This defines coordinates $(\phi,\theta,\rho)$
on a collar neighborhood of $\p P_\psi = S^1 \times \p P$,
so identifying these for $\rho \in (r - \delta,r]$ with the
$(\theta,\rho,\phi)$ coordinates chosen above on the corresponding
components of $\nN(B \cup \iI \cup \p M) \setminus \iI$ 
defines an attaching map, such that the union
$$
P_\psi \cup \nN(B \cup \iI \cup \p M)
$$
is diffeomorphic to~$M$, and the $\phi$-coordinate, which is globally
defined outside of $B \cup \iI$, corresponds to the fibration
$\pi : M \setminus (B \cup \iI) \to S^1$.

Choose a number $\delta' > \delta$ with $r - \delta' > 0$, and for each
of the coordinate neighborhoods in $\nN(B \cup \iI \cup \p M) \setminus \iI$, 
define a $1$-form of the form
$$
\lambda_0 = f(\rho)\ d\theta + g(\rho) \ d\phi,
$$
with smooth functions $f , g : [0,r] \to \RR$ chosen so that
\begin{enumerate}
\item $\ker\lambda_0 = \xi$ on a smaller neighborhood of $B \cup \iI\cup \p M$.
\item For $\nN(\iI)\setminus \iI$, $f(\rho)$ and $g(\rho)$ extend smoothly to
$[-r,r]$ as even and odd functions respectively.
\item The path $[0,r] \to \RR^2 : \rho \mapsto (f(\rho),g(\rho))$ 
moves through the first quadrant from the positive real axis to $(0,1)$
and is constant for $\rho \in [r-\delta,r]$.
\item The function
$$
D(\rho) := f(\rho) g'(\rho) - f'(\rho) g(\rho)
$$
is positive and $f'(\rho)$ is negative for all $\rho \in (0,r-\delta)$.
\item $g(\rho) = 1$ for all $\rho \in [r - \delta',r]$.
\end{enumerate}
Some possible pictures of the path $\rho\mapsto (f(\rho),g(\rho)) \in \RR^2$
(with extra conditions that will be useful in the proof of
Lemma~\ref{lemma:dynamics}) are shown in
Figure~\ref{fig:smallPeriods}.
Note that the functions $f$ and $g$ must generally be chosen individually
for each connected component of $\nN(B \cup \iI \cup \p M)$.
Extend $\lambda_0$ over $M' \setminus M$ so that $\ker\lambda_0 = \xi$
on this region, and extend it over $P_\psi$ as $\lambda_0 = d\phi$.
The kernel $\xi_0 := \ker\lambda_0$ is then a confoliation on $M'$: it
is contact outside of $M$ and near $B \cup \iI \cup \p M$, while integrable
and tangent to the fibers on~$P_\psi$.  In particular $\lambda_0$ is
contact in the region $\{ \rho < r - \delta \}$ near $B \cup \iI \cup
\p M$, and its Reeb vector field here is
\begin{equation}
\label{eqn:Reeb}
X_0 = \frac{g'(\rho)}{D(\rho)} \p_\theta - \frac{f'(\rho)}{D(\rho)} \p_\phi,
\end{equation}
which is positively transverse to the pages $\{ \phi = \text{const} \}$
and reduces to $\p_\phi$ for $\rho \in [r - \delta',r]$, which contains
the region where $P_\psi$ and $\nN(B \cup \iI \cup \p M)$ overlap.

Proceeding as in \cite{Wendl:openbook}, choose next a $1$-form $\alpha$
on~$P_\psi$ such that $d\alpha$ is positive on the fibers and, in the
chosen coordinates $(\phi,\theta,\rho)$ near $\p P_\psi$, 
$\alpha$ takes the form
$$
\alpha = (1 - \rho) \ d\theta,
$$
where we assume $r > 0$ is small enough so that $1 - \rho > 0$ when
$r \in [r-\delta,r+\delta)$.  Then if $\epsilon > 0$ is sufficiently
small, the $1$-form
$$
\lambda_\epsilon := d\phi + \epsilon \alpha
$$
is contact on $P_\psi$.  We extend it to the rest of $M'$ by setting
$\lambda_\epsilon = \lambda_0$ on $M' \setminus M$, and on
$\nN(B \cup \iI \cup \p M)$,
$$
\lambda_\epsilon = f_\epsilon(\rho)\ d\theta + g_\epsilon(\rho)\ d\phi,
$$
where the functions $f_\epsilon, g_\epsilon : [0,r] \to \RR$ satisfy
\begin{enumerate}
\item $(f_\epsilon(\rho)),g_\epsilon(\rho)) = (f(\rho),g(\rho))$ for
$\rho \le r - \delta'$,
\item $g_\epsilon(\rho) = 1$ and $f_\epsilon'(\rho) < 0$ for
$\rho \in [r - \delta',r - \delta]$,
\item $(f_\epsilon(\rho),g_\epsilon(\rho)) = (\epsilon(1 - \rho),1)$ for
$\rho \in [r - \delta,r]$,
\item $f_\epsilon \to f$ and $g_\epsilon \to g$ in $C^\infty$ as $\epsilon \to 0$.
\end{enumerate}
Now $\lambda_\epsilon$ is a contact form everywhere on~$M'$, and
$\lambda_\epsilon \to \lambda_0$ in $C^\infty$ as $\epsilon \to 0$.  
Denote the corresponding contact structure by
$$
\xi_\epsilon = \ker\lambda_\epsilon.
$$
The Reeb vector field $X_\epsilon$ of $\lambda_\epsilon$ is defined by the
obvious analogue of \eqref{eqn:Reeb} near $B \cup \iI \cup \p M$, is 
independent of $\epsilon$ on $M' \setminus M$, and on $P_\psi$ is
determined uniquely by the conditions
$$
d\alpha(X_\epsilon,\cdot) \equiv 0, \qquad d\phi(X_\epsilon) +
\epsilon\alpha(X_\epsilon) \equiv 1.
$$
It follows that as $\epsilon \to 0$, $X_\epsilon$ converges to a smooth
vector field $X_0$ that matches \eqref{eqn:Reeb} near
$B \cup \iI \cup \p M$ and on $P_\psi$ is determined by
\begin{equation}
\label{eqn:X0}
d\alpha(X_0,\cdot) \equiv 0 \quad\text{ and } \quad d\phi(X_0) \equiv 1.
\end{equation}
Observing that $X_\epsilon$ is always positively transverse to the pages
$\{ \phi = \text{const} \}$, and applying
Proposition~\ref{prop:GirouxUniqueness}, we have:

\begin{lemma}
For $\epsilon > 0$ sufficiently small, $\xi_\epsilon$ is a contact structure
on $M'$ isotopic to~$\xi$, 
and $\lambda_\epsilon$ is a Giroux form for~$\check{\boldsymbol{\pi}}$.
\end{lemma}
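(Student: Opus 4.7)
The plan is to verify three things in turn: (a) the contact condition $\lambda_\epsilon \wedge d\lambda_\epsilon > 0$ holds on all of $M'$; (b) the restriction $\lambda_\epsilon|_M$ satisfies the three axioms of Definition~\ref{defn:GirouxForm}; and (c) the isotopy $\ker\lambda_\epsilon \simeq \xi$ then follows immediately from Proposition~\ref{prop:GirouxUniqueness}, applied to the contact embedding $(M,\xi|_M) \hookrightarrow (M',\xi)$ together with the contact form $\lambda_\epsilon$.

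For (a), I would decompose $M'$ into the three regions on which $\lambda_\epsilon$ has been explicitly written down. On $M' \setminus M$ we have $\lambda_\epsilon = \lambda_0$ with $\ker\lambda_0 = \xi$, so the contact condition is inherited from $\xi$ itself. On $P_\psi$, a direct computation gives
$$
\lambda_\epsilon \wedge d\lambda_\epsilon = \epsilon\, d\phi \wedge d\alpha + \epsilon^2\, \alpha \wedge d\alpha,
$$
whose leading term is a genuinely positive volume form since $d\alpha$ is positive on the fibers of $\phi$, so positivity holds for all sufficiently small $\epsilon > 0$. On each coordinate chart in $\nN(B \cup \iI \cup \p M)$ one has
$$
\lambda_\epsilon \wedge d\lambda_\epsilon = D_\epsilon(\rho)\, d\theta \wedge d\rho \wedge d\phi, \qquad D_\epsilon := f_\epsilon g'_\epsilon - f'_\epsilon g_\epsilon,
$$
and I would check $D_\epsilon > 0$ separately on the three subintervals $[0,\, r - \delta']$ (where $D_\epsilon$ equals the originally prescribed positive $D$), $[r - \delta',\, r - \delta]$ (where $g_\epsilon \equiv 1$ forces $D_\epsilon = -f'_\epsilon > 0$), and $[r - \delta,\, r]$ (where $D_\epsilon = \epsilon > 0$ from the explicit formula $(f_\epsilon, g_\epsilon) = (\epsilon(1-\rho), 1)$).

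For (b), I verify each Giroux axiom in turn. Positive transversality of $X_\epsilon$ to page interiors reduces to $d\phi(X_\epsilon) > 0$, which follows from \eqref{eqn:Reeb} on $\nN(B \cup \iI \cup \p M)$ (there $d\phi(X_\epsilon) = -f'_\epsilon/D_\epsilon > 0$ on each subinterval) and from the defining Reeb equations on $P_\psi$ (where $d\phi(X_\epsilon) = 1 - \epsilon\, \alpha(X_\epsilon)$ is positive for small $\epsilon$). Positive tangency of $X_\epsilon$ to the binding is the already-verified fact that the standard normal form $\lambda_\epsilon = d\theta + \rho^2\, d\phi$ yields $X_\epsilon = \p_\theta$ at $\rho = 0$ along each binding circle. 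Finally, for the characteristic foliation on an interface or boundary torus $T$, the relations $g_\epsilon(0) = 0$ and $f_\epsilon(0) = f(0) > 0$ give $\lambda_\epsilon|_T = f(0)\, d\theta$, whose kernel in $TT$ is spanned by $\p_\phi$; the integral curves are precisely the $\phi$-circles, which represent the distinguished meridian class $m_T$ (up to sign, in the interface case, which is all the axiom requires).

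Step (c) is then immediate from Proposition~\ref{prop:GirouxUniqueness}, using that $\ker\lambda_\epsilon = \xi$ on $M' \setminus M$ by construction. The main thing to be careful about, rather than hard, is keeping the $(+)/(-)$ coordinate conventions from \eqref{eqn:plusMinus} straight at the interface tori: one must verify positive transversality of $X_\epsilon$ to the pages $\{\phi_\pm = \text{const}\}$ on both sides of each $T \subset \iI$ and confirm that the characteristic foliation is identified with $m_T$ up to the appropriate sign ambiguity. Everything else in the argument is a direct reading-off of signs from the explicit formulas defining $\lambda_\epsilon$.
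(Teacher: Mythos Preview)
Your proof is correct and follows essentially the same approach as the paper. The paper's version is much terser: the contact condition is established in the construction immediately preceding the lemma (``Now $\lambda_\epsilon$ is a contact form everywhere on~$M'$''), and the lemma itself is deduced from the single sentence ``Observing that $X_\epsilon$ is always positively transverse to the pages $\{\phi=\text{const}\}$, and applying Proposition~\ref{prop:GirouxUniqueness}''---so the paper leaves the tangency-at-binding and characteristic-foliation axioms implicit in the explicit normal forms, whereas you verify all three Giroux axioms explicitly. One cosmetic inaccuracy: near the binding $\lambda_\epsilon$ is not literally $d\theta + \rho^2\,d\phi$ but rather $f(\rho)\,d\theta + g(\rho)\,d\phi$ with the same kernel, so $X_\epsilon|_{\rho=0} = \frac{1}{f(0)}\p_\theta$ rather than $\p_\theta$; this does not affect the positive tangency conclusion.
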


In order to turn $\lambda_\epsilon$ into a stable Hamiltonian structure,
we define an exact taming form as follows.
For each coordinate neighborhood in $\nN(B \cup \iI \cup \p M) \setminus \iI$,
fix a smooth function $h : [r - \delta',r - \delta] \to \RR$ such that
$h' < 0$, $h(\rho) = f(\rho) + c$ for $\rho$ near $r - \delta'$ and some
constant $c \ge 0$, and $h(\rho) = 1 - \rho$ for $\rho$ near 
$r - \delta$.  
For each interface torus $T \subset \iI$ the function $f(\rho)$ is the same 
on $\nN_+(T)$ as on $\nN_-(T)$, thus we may assume the same is true
of~$h(\rho)$ and~$c$.  Then
$$
F(\rho) :=
\begin{cases}
1 - \rho & \text{ for $\rho \in [r - \delta,r]$},\\
h(\rho)  & \text{ for $\rho \in [r - \delta',r - \delta]$},\\
f(\rho) + c & \text{ for $\rho \in [0, r - \delta']$}
\end{cases}
$$
defines a smooth function on $[0,r)$ which, for components of $\nN(\iI)$,
has a smooth even extension to $[-r,r]$.  By choosing $f(\rho)$ appropriately
on the components of $\nN(\p M)$, one can also arrange $c=0$; it will be
convenient (e.g.~for Lemma~\ref{lemma:dynamics} below) to assume this for 
$\nN(\p M)$ but leave the choice of $c \ge 0$ and thus $f(\rho)$
arbitrary everywhere else.  Now there is a smooth $1$-form
$\hat{\alpha}$ on~$M'$ such that
$$
\hat{\alpha} = \begin{cases}
\alpha + d\phi & \text{ on $P_\psi$},\\
F(\rho)\ d\theta + g(\rho)\ d\phi & \text{ on $\nN(B \cup \iI \cup \p M)$},\\
\lambda_0 & \text{ on $M' \setminus M$},
\end{cases}
$$
and we use this to define an exact $2$-form
$$
\omega = d\hat{\alpha}.
$$
We claim that $(\lambda_0,\omega)$ defines a stable Hamiltonian structure
on~$M'$.  Indeed, outside $M$ and in a sufficiently small neighborhood of
$B \cup \iI \cup \p M$ this is clear since $\lambda_0$ is contact
and $\omega = d\lambda_0$.  On the subsets described in coordinates
by $r - \delta' \le \rho < r - \delta$, $\lambda_0$ is still contact
and $\omega = -h'(\rho)\ d\theta \wedge d\rho = \frac{h'(\rho)}{f'(\rho)}
d\lambda_0$, thus $\omega$ has maximal rank and its kernel is spanned
by~$X_0$.  On $P_\psi$, $d\lambda_0 = 0$ and $\omega = d\alpha$ annihilates
$X_0$ by \eqref{eqn:X0}, so the claim is proved.  In fact, for
$\epsilon > 0$ sufficiently small, we still have
$\omega|_{\xi_\epsilon} > 0$ and the kernel of $\omega$ is still spanned
by $X_\epsilon$, thus we've proved:

\begin{prop}
For sufficiently small $\epsilon \ge 0$, 
$$
\hH_\epsilon := (\lambda_\epsilon,\omega)
$$
defines a stable Hamiltonian structure on~$M'$.
\end{prop}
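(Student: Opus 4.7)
The plan is to verify the three defining conditions of a stable Hamiltonian structure directly, proceeding region by region according to the decomposition of $M'$ that has already been set up. In this sense the proposition is essentially a bookkeeping summary of the observations made in the preceding paragraphs; my job is to organize them into the three clauses $d\omega = 0$, $\lambda_\epsilon \wedge \omega > 0$, and $\ker\omega \subset \ker d\lambda_\epsilon$. The first clause is immediate, since $\omega = d\hat{\alpha}$ is exact by construction.

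For the positivity $\lambda_\epsilon\wedge\omega > 0$ I would split $M'$ into four pieces matching the four formulas used to define $\hat\alpha$, $\lambda_\epsilon$, and the functions $f_\epsilon,g_\epsilon,F$: (a) on $M'\setminus M$, $\lambda_\epsilon = \lambda_0$ is contact and $\omega = d\lambda_0$, so $\lambda_\epsilon\wedge\omega = \lambda_0\wedge d\lambda_0 > 0$; (b) on the interior subregion $\{\rho\le r-\delta'\}$ of $\nN(B\cup\iI\cup\p M)$ one has $F = f+c$ and $(f_\epsilon,g_\epsilon) = (f,g)$, so again $\omega = d\lambda_0 = d\lambda_\epsilon$ with $\lambda_\epsilon$ contact; (c) on the transition annulus $\{\rho\in[r-\delta',r-\delta]\}$ the explicit formulas give $\omega = h'(\rho)\,d\rho\wedge d\theta$ and $d\lambda_\epsilon = f'_\epsilon(\rho)\,d\rho\wedge d\theta$, and since both $h'$ and $f'_\epsilon$ are strictly negative, $\omega$ is a positive multiple of $d\lambda_\epsilon$; (d) on the remaining piece $\{\rho\in[r-\delta,r]\}\cup P_\psi$, the leading term of $\lambda_\epsilon\wedge\omega$ is $d\phi\wedge d\alpha$, which is positive because $d\alpha$ is an area form on the pages and $d\phi$ is positively transverse to them, and the $O(\epsilon)$ correction $\epsilon\,\alpha\wedge d\alpha$ is harmless for $\epsilon$ small.

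For the kernel condition $\ker\omega\subset\ker d\lambda_\epsilon$, the cleanest route is to sharpen it to equality by observing that on every region $\omega$ is a positive scalar multiple of $d\lambda_\epsilon$: in regions (a)--(c) by the explicit ratios above, and in (d) by $d\lambda_\epsilon = \epsilon\,d\alpha = \epsilon\,\omega$. Therefore $\ker\omega = \ker d\lambda_\epsilon$ throughout $M'$, and in particular the Reeb vector field of $\hH_\epsilon$ coincides with the contact Reeb vector field $X_\epsilon$ everywhere. The only step requiring a moment's care is the transition region (c), where one must confirm that the monotonicity conditions built into $h$ and $f_\epsilon$ give matching negative signs so that the proportionality factor $h'/f'_\epsilon$ is positive; this is the main (and very mild) technical point. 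Once that is checked, the uniform smallness of $\epsilon$ needed in (d) is standard because all the estimates are $C^0$-open conditions on a compact manifold, and all smoothness across the interfaces of the four regions follows from the matching conditions built into the definitions of $F$ and $(f_\epsilon,g_\epsilon)$.
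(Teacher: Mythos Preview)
Your argument is correct and follows essentially the same region-by-region verification as the paper, which carries out exactly this check in the paragraph preceding the proposition; the only organizational difference is that the paper first establishes the case $\epsilon=0$ and then invokes a perturbation argument for small $\epsilon>0$, whereas you compute $d\lambda_\epsilon$ directly for all $\epsilon\ge 0$ at once. One cosmetic slip: your claim that $\ker\omega = \ker d\lambda_\epsilon$ everywhere is false at $\epsilon=0$ on region~(d), since there $d\lambda_0 = 0$ while $\omega = d\alpha \ne 0$; but the required inclusion $\ker\omega \subset \ker d\lambda_0$ holds trivially, so this does not affect the proof.
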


\begin{defn}
\label{defn:goodSHS}
Any smooth family $\hH_\epsilon = (\lambda_\epsilon,\omega)$ of 
stable Hamiltonian structures on $M'$ defined for small $\epsilon \ge 0$ 
by the procedure above will be said to be
\emph{adapted} to~$\check{\boldsymbol{\pi}}$.
\end{defn}

\begin{lemma}
\label{lemma:dynamics}
There exists a number $\tau_1 > 0$ so that for any $\tau_0 > 0$
and $m_0 \in \NN$, a family
of stable Hamiltonian structures $\hH_\epsilon = (\lambda_\epsilon,\omega)$
on $M'$ adapted to~$\check{\boldsymbol{\pi}}$
can be constructed so as to satisfy the following additional conditions
on the Reeb vector fields $X_\epsilon$:
\begin{enumerate}
\item The interface and boundary tori are Morse-Bott submanifolds, and all
closed orbits in a neighborhood of $\iI \cup \p M$ are also Morse-Bott.
\item Each connected component $\gamma \subset B$ and all its multiple
covers are nondegenerate elliptic orbits, and their covers up to 
multiplicity $m_0$ all have
Conley-Zehnder index~$1$ with respect to
the natural trivialization of $\xi$ along $\gamma$ 
determined by the coordinates.
\item All orbits in $B_0 \cup \iI_0 \cup \p M_0$ have minimal period at most
$\tau_0$, while all other orbits have period at least~$\tau_1$.
\end{enumerate}
Moreover for each $\epsilon > 0$ sufficiently small, the contact form
$\lambda_\epsilon$ admits a $C^\infty$-small perturbation to a
globally Morse-Bott contact form whose Reeb vector field still 
satisfies the above conditions.
\end{lemma}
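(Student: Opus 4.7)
The plan is to design, in each coordinate patch around a connected component of $B \cup \iI \cup \p M$, the pair of functions $(f(\rho), g(\rho))$ so that the resulting Reeb dynamics exhibit all required features; the mapping torus region $P_\psi$ is then handled by a standard small perturbation. The basic dynamical observation is that from \eqref{eqn:Reeb}, $X_0$ at radius $\rho$ points in the direction $(g'(\rho), -f'(\rho))$ in the $(\p_\theta, \p_\phi)$-basis, so a torus $\{\rho = \rho_\ast\}$ is foliated by closed Reeb orbits exactly when the slope
\[
s(\rho) := -f'(\rho)/g'(\rho) \in \RR \cup \{\infty\}
\]
is rational, and such a torus is Morse--Bott precisely when $s'(\rho_\ast) \neq 0$. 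Accordingly, one arranges $s$ to be strictly monotonic on each coordinate neighborhood in $\nN(B \cup \iI \cup \p M)$, taking prescribed rational values only at the radii where the desired Morse--Bott tori or binding orbits should sit.

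Near each binding component $\gamma \subset B$, I would take $f(\rho) = f_0 + f_2 \rho^2 + O(\rho^4)$ and $g(\rho) = g_2 \rho^2 + O(\rho^4)$ with $f_0, g_2 > 0$. A direct linearization of the Reeb flow along $\gamma$, in the unitary trivialization of $\xi$ determined by the polar coordinates, shows that $\gamma$ is elliptic and nondegenerate with transverse rotation number $\alpha := f_2 / (g_2 f_0)$, and that its $k$-fold cover has Conley--Zehnder index~$1$ precisely when $k \alpha \in (0,1)$. Thus I choose the coefficients so that $0 < \alpha < 1/m_0$, and further set $f_0 \le \tau_0$ for $\gamma \subset B_0$ and $f_0 \ge 1$ otherwise, so as to control the binding period. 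Near each interface or boundary torus $T$, $f$ and $g$ are picked so that at $\rho = 0$ the slope $s$ attains the rational value specified by the meridian $m_T$ with $s'(0) \neq 0$, and the scale of $f$ is chosen to enforce period at most $\tau_0$ if $T \subset \iI_0 \cup \p M_0$, and at least~$1$ otherwise. Interpolating $(f,g)$ monotonically between these endpoint germs on each coordinate neighborhood then guarantees that the only closed orbits in $\nN(B \cup \iI \cup \p M)$ are the prescribed nondegenerate binding orbits and the Morse--Bott tori, and all of the latter (together with orbits on any nearby level sets whose slope happens to be rational) are automatically Morse--Bott by the strict monotonicity of~$s$.

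On $M' \setminus M$, the form $\lambda_\epsilon$ coincides with a fixed background contact form, whose orbits have minimal period bounded below independently of~$\epsilon$; on $P_\psi$, $X_\epsilon = \p_\phi + O(\epsilon)$, so all closed orbits have period close to~$1$ (and correspond to periodic points of iterates of the monodromy). Taking $\tau_1 > 0$ smaller than all these lower bounds together with the bounds coming from $f_0 \ge 1$ near $B \setminus B_0$ and from the tori in $\iI \setminus \iI_0$ and $\p M \setminus \p M_0$, the period conditions are met. For the final \emph{globally} Morse--Bott statement, a $C^\infty$-small perturbation of $\lambda_\epsilon$ supported in $P_\psi$---implementing a $C^2$-small Morse Hamiltonian perturbation of the monodromy of~$\pi$---replaces the degenerate families of period-$\approx 1$ orbits in $P_\psi$ by isolated nondegenerate ones while leaving the dynamics already arranged inside $\nN(B \cup \iI \cup \p M)$ completely untouched; this mapping-torus perturbation is standard (cf.~\cite{Wendl:openbook}). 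The principal technical obstacle lies in the binding step: the CZ-index condition pins $\alpha$ to the shrinking interval $(0, 1/m_0)$, and the chosen germ of $(f, g)$ at $\rho = 0$ must then be smoothly extended to the prescribed behavior at $\rho \ge r - \delta'$ while preserving $D(\rho) > 0$ and strict monotonicity of $s(\rho)$ throughout---i.e., one must produce on $[0, r-\delta]$ an embedded plane curve in $\RR^2 \setminus \{0\}$ with prescribed jets at both endpoints whose tangent direction rotates monotonically. This is elementary, but verifying its compatibility with all the period and Morse--Bott constraints simultaneously is what occupies the bulk of the work.
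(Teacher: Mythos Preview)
Your overall strategy---designing $(f,g)$ in each coordinate patch to control the slope $s(\rho) = -f'/g'$---is the same as the paper's, and the Morse--Bott and Conley--Zehnder conditions are handled correctly in spirit (modulo a likely sign in your formula for~$\alpha$: in the paper's conventions $f''(0)/g''(0)$ is negative and close to zero). But you have missed the actual crux of the proof and misidentified the ``principal technical obstacle.'' The real difficulty is the \emph{period gap}: the lemma requires $\tau_1$ to exist \emph{before} $\tau_0$ is chosen, so as $\tau_0 \to 0$ the lower bound $\tau_1$ must not move. Your argument accounts for orbits in $P_\psi$, in $M' \setminus M$, and at $\rho = 0$ in each coordinate patch, but says nothing about the closed orbits sitting on the dense set of rational-slope tori $\{\rho = \rho_*\}$ inside $\nN(B_0 \cup \iI_0 \cup \p M_0)$ for $\rho_* > 0$. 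Their periods depend on $(f,g)$, and as you shrink the scale of~$f$ near $\rho=0$ to force period $\le \tau_0$ there, it is not automatic that nearby tori retain period $\ge \tau_1$. The paper deals with this by a two-step argument: first fix $\lambda_0$ entirely outside $\nN(B_0 \cup \iI_0 \cup \p M_0)$ (this region is invariant under the Reeb flow since the concentric tori are preserved, and compactness gives a uniform~$\tau_1$); then inside each such patch design $(f,g)$ explicitly so that, near a binding circle in~$B_0$, the slope is kept \emph{constant and irrational} on $[0,2r/3]$---eliminating all closed orbits in that inner region---while on $[2r/3,r]$ a direct estimate of $|D(\rho)/f'(\rho)|$ bounds the period below by a quantity independent of~$\tau_0$. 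An analogous explicit estimate is carried out near the Morse--Bott tori. This computation is elementary but it \emph{is} the content of the lemma, and your proposal does not contain it. In particular, your insistence on strict monotonicity of~$s$ all the way to $\rho=0$ near binding orbits makes the problem harder than necessary: the constant-irrational-slope trick simultaneously delivers the CZ condition and empties the region of periodic orbits, so the ``extension with prescribed jets and monotone tangent'' that you flag as the main obstacle never arises.

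A smaller gap: your final perturbation to a globally Morse--Bott form is supported only in~$P_\psi$, but the fixed background contact form on $M' \setminus M$ has no reason to be Morse--Bott. The paper instead chooses a neighborhood $\uU$ of $B \cup \iI \cup \p M$ bounded by irrational-slope tori (hence with no periodic orbits near $\p\overline{\uU}$), and perturbs $\lambda_\epsilon$ generically on all of $M' \setminus \overline{\uU}$ to make every orbit not contained in~$\overline{\uU}$ nondegenerate; the orbits inside~$\uU$ are already Morse--Bott by construction.
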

\begin{proof}
We first prove that the stated conditions can be established for~$X_0$.

If $\gamma \subset B$ is a binding circle, then $\gamma$ and all its
multiple covers can be made nondegenerate and elliptic by choosing
the functions $f$ and $g$ so that
$$
f'(\rho) / g'(\rho) \in \RR \setminus \QQ \quad
\text{for all $\rho > 0$ sufficiently small}.
$$
This implies that the slope of the curve $\rho \mapsto (f(\rho),g(\rho)) \in
\RR^2$ is constant for $\rho$ near~$0$, and this slope determines the
Conley-Zehnder index of~$\gamma$; in particular, the stated
condition is satisfied whenever $f''(0) / g''(0)$ is a negative number
sufficiently close to~$0$.  Assume this from now on.

Similarly, we make every orbit in a neighborhood of $\iI \cup \p M$
Morse-Bott by assuming that in such a neighborhood,
$\lambda_0 = f(\rho)\ d\theta + g(\rho)\ d\phi$ where $f$ and $g$
satisfy
$$
f'(\rho) g''(\rho) - f''(\rho) g'(\rho) > 0.
$$
This means that the path $\rho \mapsto (f(\rho),g(\rho)) \in \RR^2$
has nonzero inward angular acceleration as it winds (counterclockwise)
about the origin; clearly for $\nN(\iI)$ we can also still safely 
assume that $f$ and $g$ are restrictions of even and odd functions 
respectively on $[-r,r]$.

We now show that the periods of the orbits in $B_0 \cup \iI_0 \cup \p M_0$ 
can be made arbitrarily small compared to all other periods.  Observe that by
\eqref{eqn:Reeb}, the Reeb flow as we've constructed it preserves the
concentric tori $\{ \rho = \text{const} \}$ in the neighborhood
$\nN(B_0 \cup \iI_0 \cup \p M_0)$, thus it also preserves $M' 
\setminus \nN(B_0 \cup \iI_0 \cup \p M_0)$.
Since the latter has compact closure, there is a positive lower bound for the
periods of all closed orbits in $M' \setminus \nN(B_0 \cup \iI_0 \cup \p M_0)$, 
so it 
will suffice to leave $\lambda_0$ fixed in this region and reduce the periods
in $B_0 \cup \iI_0 \cup \p M_0$ while preserving a lower bound for all other orbits
in $\nN(B_0 \cup \iI_0 \cup \p M_0)$.

Consider a binding orbit $\gamma \subset B_0$: writing $\lambda_0$ as
$f(\rho)\ d\theta + g(\rho)\ d\phi$ near~$\gamma$, the period of $\gamma$
is $f(0) > 0$.  Choosing sufficiently small constants $\tau > 0$ and 
$\epsilon_0 > 0$, we impose the following additional 
conditions on $f$ and~$g$ (see Figure~\ref{fig:smallPeriods}, left):
\begin{itemize}
\item $(f(0),g(0)) = (\tau,0)$,
\item For all $\rho \in (0,r]$,
$$
\frac{g'(\rho)}{-f'(\rho)} \le \frac{1}{\tau} + \epsilon_0 \in
\RR\setminus \QQ,
$$
with equality for $\rho \le 2r / 3$.
\item For $\rho \in [2r/3, r]$, $g(\rho) \ge 2/3$ and $f(\rho) \le \tau/3$.
\end{itemize}
Since $f'(\rho) / g'(\rho)$ is irrational for $\rho \le 2r/3$, all closed
orbits in $\nN(\gamma) \setminus \gamma$ are outside this region.  
For any $\rho_0 \in [2r/3,r]$, \eqref{eqn:Reeb} implies that a
Reeb orbit in $\{ \rho = \rho_0 \}$ has its $\phi$-coordinate increasing
at the constant rate of $-f'(\rho_0) / D(\rho_0)$.  Its period is thus at least
\begin{equation}
\label{eqn:lowerBound}
\begin{split}
\left| \frac{D(\rho_0)}{f'(\rho_0)} \right| &=
\left| \frac{f(\rho_0) g'(\rho_0) - f'(\rho_0) g(\rho_0)}{f'(\rho_0)} \right| \ge
| g(\rho_0) | - \left| f(\rho_0) \frac{g'(\rho_0)}{f'(\rho_0)} \right| \\ 
&\ge \frac{2}{3} - \left| \frac{\tau}{3} \left( \frac{1}{\tau} + \epsilon_0 \right) \right|
= \frac{2}{3} - \frac{1}{3}( 1 + \tau \epsilon_0) > 0.
\end{split}
\end{equation}
We can therefore keep these periods bounded away from zero while shrinking
$f(0) = \tau$ to make both the period at $\gamma$ and the ratio 
$- f'(\rho) / g'(\rho)$ near $\gamma$ arbitrarily small.

\begin{figure}
\begin{center}
\includegraphics{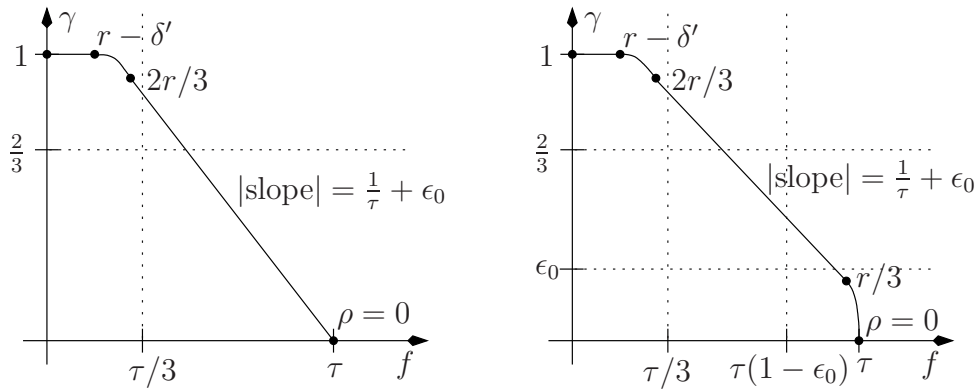}
\caption{\label{fig:smallPeriods}
The path $\rho \mapsto (f(\rho),g(\rho)) \in \RR^2$ with the extra 
conditions imposed in the proof of Lemma~\ref{lemma:dynamics} for the
nondegenerate case (left) and Morse-Bott case (right).}
\end{center}
\end{figure}

The above requires only a small modification for the neighborhood of a
torus $T \subset \iI_0 \cup \p M_0$: here we need $f$ and $g$ to extend over
$\rho \in [-r,r]$ as even and odd functions respectively, so it is no longer
possible to fix the slope $f'(\rho) / g'(\rho)$ throughout $\rho \in [0,2r/3]$.
In fact $f'(0)$ must vanish, so we amend the above conditions by allowing
them to hold for $\rho \in [r/3,r]$, but requiring the following for
$\rho \in [0,r/3]$,
\begin{itemize}
\item $- g'(\rho) / f'(\rho) \ge 1 / \tau + \epsilon_0$,
\item $f(\rho) \ge \tau (1 - \epsilon_0)$,
\item $g(\rho) \le \epsilon_0$.
\end{itemize}
This modification is shown at the right of Figure~\ref{fig:smallPeriods}.
Now for $\rho \le r/3$, the lower bound calculated in \eqref{eqn:lowerBound} 
becomes
\begin{equation*}
\begin{split}
\left| \frac{D(\rho_0)}{f'(\rho_0)} \right| &\ge
\left| f(\rho_0) \frac{g'(\rho_0)}{f'(\rho_0)} \right| - |g(\rho_0)| \ge
\tau (1 - \epsilon_0) \left( \frac{1}{\tau} + \epsilon_0 \right) - \epsilon_0 \\
&= 1 + \epsilon_0 \left( \tau - 2 - \tau \epsilon_0^2 \right) > 0.
\end{split}
\end{equation*}
Thus we can freely shrink $f(0) = \tau$, the minimal period of the
Morse-Bott family at~$T$, while bounding all other periods away from zero.

Since $X_\epsilon$ is a small perturbation of $X_0$ outside a neighborhood
of $B \cup \iI \cup \p M$, the same results immediately hold for
$X_\epsilon$: in particular, for any sequence $\epsilon_k \to 0$,
$M' \setminus \nN(B_0 \cup \iI_0 \cup \p M_0)$ cannot contain a sequence of orbits
of $X_{\epsilon_k}$ with periods below a certain threshold, as a subsequence 
of these would converge (by Arzel\`{a}-Ascoli) to an orbit of~$X_0$.
Similarly, this constraint on the periods will be satisfied by any 
sufficiently small perturbation of~$X_\epsilon$.  We can now choose such
a perturbation to a globally Morse-Bott contact form as follows:
let $\uU \subset M'$ denote a union of coordinate neighborhoods of the form
$\{ |\rho| < r_0 \}$ near each component of $B \cup \iI \cup \p M$,
where $r_0 > 0$ is chosen such that all periodic orbits inside $\uU$ are 
Morse-Bott and none exist near $\p\overline{\uU}$ (because
$f' / g'$ is irrational).
After a generic perturbation of $\lambda_\epsilon$ in 
$M' \setminus \overline{\uU}$, every Reeb orbit not fully contained in
$\overline{\uU}$ becomes nondegenerate (cf.~the appendix of
\cite{AlbersBramhamWendl}), which means all orbits outside $\overline{\uU}$
are nondegenerate, while all the others (which are inside $\uU$) are
Morse-Bott by construction.
\end{proof}

\begin{remark}
To satisfy the conditions stated in Theorem~\ref{thm:openbook}, we need
a version of Lemma~\ref{lemma:dynamics} with $\tau_1 = 1$.  This can always
be achieved by rescaling $\lambda_\epsilon$ by a constant, and thus replacing
$\hH_\epsilon = (\lambda_\epsilon,\omega)$ by $(c \lambda_\epsilon,\omega)$
for some $c > 0$.
\end{remark}

\subsubsection*{A symplectic cobordism}

As a quick detour away from the proof of Theorem~\ref{thm:openbook}, we
now explain a construction that will be useful for proving
Theorem~\ref{thm:complement}.  Namely, we will need to know that the
stable Hamiltonian structures $\hH_0$ and $\hH_\epsilon$ for some
$\epsilon > 0$ can be related
to each other by a cylindrical 
symplectic cobordism that looks standard near the binding.

To simplify the statement of the following result, let us restrict to
the special case where $M = M'$ and $\pi : M \setminus B \to S^1$ is an
ordinary (not summed or blown up) open book; this will suffice for the
application we have in mind.

\begin{prop}
\label{prop:cobordism}
There exists a family of stable Hamiltonian structures 
$\hH_\epsilon = (\lambda_\epsilon,\omega)$ on~$M$ adapted to the open
book $\pi : M \setminus B \to S^1$ such that $[0,1] \times M$ admits
a symplectic form~$\Omega$ with the following properties:
\begin{itemize}
\item $\Omega = \omega + d(t\lambda_0)$ near $\{0\} \times M$.
\item $\Omega = d(e^t\lambda)$ near $\{1\} \times M$ for some contact form
$\lambda$ with $\ker\lambda = \xi_\epsilon$ and some $\epsilon > 0$.
\item $\Omega = d(\varphi(t)\lambda_0)$ on $[0,1] \times \uU$ for some
neighborhood $\uU \subset M$ of $B$ on which $\lambda_\epsilon = \lambda_0$,
and some smooth function $\varphi : [0,1] \to (0,\infty)$ with
$\varphi' > 0$.
\end{itemize}
\end{prop}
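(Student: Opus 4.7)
The plan is to exploit the exactness of $\omega = d\hat\alpha$ so that both prescribed boundary values appear as $d$ of an explicit primitive on $M$: near $\{0\}\times M$ we have $\omega + d(t\lambda_0) = d(\hat\alpha + t\lambda_0)$, and near $\{1\}\times M$ we want $d(e^t\lambda)$. Taking $\lambda := c\,\lambda_\epsilon$ for constants $c > 0$ and $\epsilon > 0$ to be chosen, I would construct a smooth family $\beta_t$ of $1$-forms on $M$ interpolating between these two primitives and set $\Omega := d\beta$, where $\beta$ is viewed as a $1$-form on $[0,1]\times M$ with no $dt$-component.

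Concretely, fix a smooth monotone cutoff $\chi : [0,1]\to [0,1]$ with $\chi\equiv 0$ near $0$ and $\chi\equiv 1$ near $1$, and define
\[
\beta_t := (1-\chi(t))(\hat\alpha + t\lambda_0) + \chi(t)\, ce^t\lambda_\epsilon.
\]
By construction, $\Omega = \omega + d(t\lambda_0)$ near $\{0\}\times M$ and $\Omega = d(ce^t\lambda_\epsilon)$ near $\{1\}\times M$, yielding the first two conditions with $\lambda = c\lambda_\epsilon$. On the binding neighborhood $\uU$ where $\hat\alpha = \lambda_0 = \lambda_\epsilon$, $\beta_t$ collapses to $\varphi(t)\lambda_0$ with $\varphi(t) = (1-\chi(t))(1+t) + \chi(t)ce^t$; once $c$ is large enough that $ce^t\ge 1+t$ on $[0,1]$, one reads off $\varphi > 0$ and $\varphi'(t) = (1-\chi) + \chi ce^t + \chi'(ce^t - 1 - t) > 0$, giving the third condition.

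The substantive task is to show $\Omega = dt\wedge\dot\beta_t + d_M\beta_t$ is symplectic, equivalently that $\dot\beta_t\wedge d_M\beta_t > 0$ as a top-degree form on $M$ for every $t$. I would check this in three regions. Near $B$, $\beta_t = \varphi(t)\lambda_0$ immediately gives $\dot\beta_t\wedge d_M\beta_t = \varphi\varphi'\,\lambda_0\wedge d\lambda_0 > 0$ since $\lambda_0$ is contact there. On the contact collar $\rho\in[r-\delta', r-\delta]$, writing $\beta_t = P(t,\rho)\,d\theta + Q(t)\,d\phi$, a short computation produces $\dot\beta_t\wedge d_M\beta_t = -\partial_t Q\cdot\partial_\rho P\,d\theta\wedge d\rho\wedge d\phi$, which is positive because $\partial_t Q > 0$ (as in the binding calculation) and $\partial_\rho P < 0$ (since $h'$, $f'$, and $f'_\epsilon$ are all negative on this interval).

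The hard part will be the mapping torus $P_\psi$, where $\lambda_0 = d\phi$ is integrable, $d\lambda_0 = 0$, $\omega = d\alpha$, $\hat\alpha = \lambda_0 + \alpha$, and $\lambda_\epsilon = \lambda_0 + \epsilon\alpha$. Expanding in the frame $\{\lambda_0, \alpha\}$ presents $\dot\beta_t\wedge d_M\beta_t$ as $A_3(A_1\,\lambda_0\wedge d\alpha + A_2\,\alpha\wedge d\alpha)$ for explicit polynomials $A_i$ in $\chi, \chi', c, \epsilon, e^t$; writing $\alpha\wedge d\alpha = K\cdot\lambda_0\wedge d\alpha$ for a bounded function $K\in C^\infty(P_\psi)$, and noting $A_3 > 0$, positivity reduces to
\[
A_1 + K A_2 = (1-\chi) + (\chi+\chi')\,ce^t(1+K\epsilon) - \chi'(1+t+K) > 0.
\]
The plan is to choose $\epsilon > 0$ first, so small that $1 + K\epsilon \ge 1/2$ everywhere on the compact set $P_\psi$, and then $c$ large enough that the $O(c)$ term dominates the bounded subtraction $\chi'(1+t+K)$ uniformly on $P_\psi\times[0,1]$. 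These choices are compatible with the weaker bounds required in the other regions, so a single pair $(c,\epsilon)$ yields the desired symplectic form on $[0,1]\times M$ with all three listed properties.
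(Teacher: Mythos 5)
Your construction is correct, and it reaches the conclusion by a genuinely different mechanism than the paper's proof. The paper keeps the primitive in the form $\hat{\alpha} + t\lambda_{\beta(t)}$, interpolating the \emph{parameter} $\epsilon$ of the stable Hamiltonian structure via a cutoff $\beta(t)$; symplecticity is then obtained softly (a $C^\infty$-perturbation of $\omega + d(t\lambda_0)$ for small $t$, and the fact that $\omega$ and $d\lambda_\epsilon$ have the same kernel and agree up to a positive function for $t \ge \epsilon_1$), and the standard form $d(e^t\lambda)$ near the top boundary is produced \emph{a posteriori} by flowing along the Liouville vector field dual to $\hat{\alpha} + t\hat{\lambda}$, which is transverse to $\{T\}\times M$ only for $T$ large because $\hat{\alpha} + T\lambda_\epsilon$ is contact for large $T$. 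You instead interpolate directly between the two target primitives with a cutoff $\chi$ and a large constant $c$ (which plays exactly the role of the paper's large $T$), and verify nondegeneracy by the explicit positivity of $\dot\beta_t\wedge d_M\beta_t$ in each of the three regions. What your route buys: the top boundary comes out in the form $d(ce^t\lambda_\epsilon)$ with $\ker\lambda = \xi_\epsilon$ literally, whereas the paper's Liouville-flow normalization only yields a contact form whose kernel is \emph{isotopic} to $\xi_\epsilon$ via Gray's theorem (the statement as written is slightly generous on this point); and no reparametrization of the $t$-interval is needed. What it costs: you must carry out the pointwise estimate $(1-\chi) + (\chi+\chi')ce^t(1+K\epsilon) - \chi'(1+t+K) > 0$ on $P_\psi$, which the paper's softer argument avoids; your order of quantifiers ($\epsilon$ small so that $1+K\epsilon \ge 1/2$ on the compact mapping torus, then $c$ large to dominate the bounded term $\chi'(1+t+K)$) is the right one and closes the argument. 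One cosmetic remark: in the collar region the paper's proof of this proposition replaces $h$ by taking $f(\rho)=1-\rho$ near $r-\delta'$ and $F(\rho)=1-\rho$ on all of $[r-\delta',r]$, so the relevant negativity is that of $F'$, $f'$ and $f_\epsilon'$; your conclusion $\partial_\rho P<0$ is unaffected.
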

\begin{remark}
We are not claiming that $\hH_\epsilon$ in this result can be chosen to
make the periods of binding orbits small as in Lemma~\ref{lemma:dynamics}
and Theorem~\ref{thm:openbook}.  For our application we will not need this.
\end{remark}
\begin{proof}[Proof of Prop.~\ref{prop:cobordism}]
In $(\theta,\rho,\phi)$-coordinates on $\nN(B)$, we can write
$\lambda_0 = f(\rho)\ d\theta + g(\rho)\ d\phi$ with $f$ and~$g$ chosen
such that $f(\rho) = 1 - \rho$ for $\rho$ near $r - \delta'$.  Then setting
$$
F(\rho) = \begin{cases}
1 - \rho & \text{ for $\rho \in [r - \delta',r]$},\\
f(\rho)  & \text{ for $\rho \in [0,r - \delta']$}
\end{cases}
$$
and defining $\hat{\alpha}$ and $\omega$ as before, we have
$\omega \equiv d\hat{\alpha}$ where $\hat{\alpha} = \lambda_0$ on a neighborhood
$\uU := \{ \rho < r - \delta' \}$ of~$B$.

With this stipulation in place, construct the family $\lambda_\epsilon$ 
as before.  Next choose small numbers $\epsilon,\epsilon_1 > 0$ and a 
smooth function $\beta : [0,\infty) \to [0,\epsilon]$ such that
\begin{itemize}
\item $\beta(t) = 0$ for $t$ near~$0$,
\item $\beta(t) = \epsilon$ for $t \ge \epsilon_1$.
\end{itemize}
Define a $1$-form $\hat{\lambda}$ on $[0,\infty) \times M$ by
$$
\hat{\lambda}|_{(t,p)} = \lambda_{\beta(t)}|_p
$$
for all $(t,p) \in [0,\infty) \times M$, and then define
$$
\Omega = \omega + d(t \hat{\lambda})
$$
on $[0,\infty) \times M$.  Note that $\omega + d(t\lambda_0)$ is symplectic
on $[0,\epsilon_1] \times M$ if $\epsilon_1 > 0$ is sufficiently small, and
$\Omega$ is $C^\infty$-close to this if $\epsilon > 0$ is also small,
implying that $\Omega$ is also symplectic on $[0,\epsilon_1] \times M$.
It is also obviously symplectic on $[\epsilon_1,\infty) \times M$ since it then
equals
$$
\omega + d(t \lambda_\epsilon)
$$
for some $\epsilon > 0$, where $\lambda_\epsilon$ is contact and
$\omega$ is $d\lambda_\epsilon$ multiplied by a smooth positive function.
This construction thus gives a symplectic form on $[0,\infty) \times M$
which has the desired form already near $\{0\} \times M$ and 
on $[0,\infty) \times \uU$.  To define a suitable top boundary for the
cobordism, observe that $\Omega = d(\hat{\alpha} + t\hat{\lambda})$,
thus the $\Omega$-dual vector field to $\hat{\alpha} + t\hat{\lambda}$
is a Liouville vector field~$Y$:
$$
\iota_Y \Omega := \hat{\alpha} + t\hat{\lambda}.
$$
We claim that on the hypersurface $\{T\} \times M$ for $T > 0$ sufficiently
large, $dt(Y) > 0$.  Indeed, this is equivalent to the statement that
$\hat{\alpha} + t\hat{\lambda}$ defines a positive contact form on
$\{T\} \times M$, which is true if $T$ is large enough since its kernel
is then a small perturbation of $\ker\lambda_\epsilon$.
Thus fixing $T$ sufficiently large,
$\{T\} \times M$ is a convex boundary component of $[0,T] \times M$.
Moreover since the primitive of $\Omega$ is just $(1 + t)\lambda_0$
in $[\epsilon_1,\infty) \times \uU$, $Y$ takes the simple form
$(1 + t) \p_t$ in this region.  Using the flow of~$Y$ near $\{T\} \times M$,
we can now identify a neighborhood of this hypersurface in $[0,T] \times M$
symplectically with a domain of the form
$$
((1 - \epsilon_1,1] \times M, d(e^t \lambda)),
$$
where $\lambda$ is a constant multiple of the contact $1$-form
$\hat{\alpha} + T\lambda_\epsilon$, which defines a contact structure
isotopic to~$\xi_\epsilon$ due to Gray's theorem.  There is thus a
diffeomorphism of $[0,T] \times M$ to $[0,1] \times M$ that transforms
$\Omega$ into the desired form.
\end{proof}

\subsubsection*{Non-generic holomorphic curves and perturbation}

Returning to the proof of Theorem~\ref{thm:openbook}, assume
$\hH_\epsilon = (\lambda_\epsilon,\omega)$ is a family of stable Hamiltonian
structures adapted to the blown up summed open 
book~$\check{\boldsymbol{\pi}}$ on $M \subset M'$ and satisfying
Lemma~\ref{lemma:dynamics}.
Choose any compatible almost complex structure
$J_0 \in \jJ(\hH_0)$ which has the following properties in
the coordinate neighborhoods $\nN(B \cup \iI \cup \p M)$:
\begin{itemize}
\item $J_0$ is invariant under the $T^2$-action defined by translating the
coordinates $(\theta,\phi)$.
\item $d\rho(J_0 \p_\rho) \equiv 0$.
\end{itemize}
Observe that $\p_\rho \in \xi_0$ always, so the second condition says
that $J_0$ maps $\p_\rho$ into the characteristic foliation defined by
$\xi_0$ on the torus $\{ \rho = \text{const} \}$.  Note
also that since $\xi_0$ is tangent to the fibers of~$P_\psi$, these fibers
naturally embed into $\RR\times M'$ as $J_0$-holomorphic curves.
The construction in \cite{Wendl:openbook}*{\S 3} now carries over
directly to the present setting and gives the following result.

\begin{prop}
\label{prop:holOBD}
For each $i=0,\ldots,N$, the interior of
$\RR \times (M_i \setminus (B_i \cup \iI_i))$ is foliated by
an $\RR$-invariant family of properly embedded surfaces
$$
\{ S^{(i)}_{\sigma,\tau} \}_{(\sigma,\tau) \in \RR \times S^1}
$$
with $J_0$-invariant tangent spaces, where
$$
S^{(i)}_{\sigma,\tau} \cap (\RR \times P_\psi) =
\{\sigma\} \times \left( \pi_i^{-1}(\tau) \cap P_\psi \right),
$$
and its intersection with each connected component of 
$\RR \times \nN(B \cup \iI \cup \p M)$ can be parametrized in
$(\theta,\rho,\phi)$-coordinates by a map of the form
$$
[0,\infty) \times S^1 \to \RR \times S^1 \times (0,r] \times S^1 :
(s,t) \mapsto (a_i(s) + \sigma,t,\rho_i(s),\tau).
$$
Here $a_i : [0,\infty) \to [0,\infty)$ is a fixed map with
$a_i(0) = 0$ and $\lim_{s \to \infty} a_i(s) = +\infty$, and 
$\rho_i : [0,\infty) \to (0,r]$ is a fixed orientation reversing
diffeomorphism.
\end{prop}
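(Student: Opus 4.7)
The plan is to construct the surfaces $S^{(i)}_{\sigma,\tau}$ piecewise, following the recipe of \cite{Wendl:openbook}*{\S 3}, and adapting it to the presence of multiple irreducible subdomains and both interface and boundary tori (which requires only cosmetic modifications, since the local picture near each binding/interface/boundary circle is already normal-form). In the mapping torus region $P_\psi$, together with each overlap band $\{r - \delta \le \rho \le r\}$ of the coordinate neighborhoods in $\nN(B \cup \iI \cup \p M)$, we have $\lambda_0 = d\phi$, so $\xi_0 = \ker d\phi$ agrees with the tangent spaces to the pages of $\pi_i$. Since $J_0$ preserves $\xi_0$, each slice
$$
\{\sigma\} \times \big(\pi_i^{-1}(\tau) \cap P_\psi\big)
$$
has $J_0$-invariant tangent space and so gives a $J_0$-holomorphic piece of $S^{(i)}_{\sigma,\tau}$ in this region, with the $\RR$-coordinate constant and equal to $\sigma$.

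For each coordinate neighborhood inside $\nN(B \cup \iI \cup \p M)$, I would extend this into the region $\{\rho < r - \delta\}$ by the ansatz
$$
u(s,t) = \big(a(s) + \sigma,\ t,\ \rho(s),\ \tau\big),
$$
with $(s,t) \in [0,\infty) \times S^1$. The $T^2$-invariance of $J_0$ in $(\theta,\phi)$ and the normalization $d\rho(J_0\p_\rho)=0$ force $J_0\p_\rho$ to lie in the line $\xi_0 \cap T\{\rho=\text{const}\}$, spanned by $Y_\rho := -g(\rho)\p_\theta + f(\rho)\p_\phi$; compatibility with $\omega = d\hat\alpha$ yields $J_0\p_\rho = \mu(\rho) Y_\rho$ for some smooth positive function $\mu(\rho)$. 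Using the change of basis between $\{\p_\theta,\p_\phi\}$ and $\{X_0, Y_\rho\}$ to compute $J_0\p_\theta$, the Cauchy-Riemann equation $u_s + J_0 u_t = 0$ collapses onto the autonomous system
$$
a'(s) = f(\rho(s)), \qquad \rho'(s) = \frac{f'(\rho(s))}{D(\rho(s))\,\mu(\rho(s))}.
$$
Since $f'<0$, $D>0$, $\mu>0$ on $(0,r-\delta)$, $\rho(s)$ decreases strictly and monotonically from any starting value in $(0,r-\delta)$ toward $0$; the limit $\rho(s) \to 0$ corresponds to a closed Reeb orbit in the binding circle $\{\rho = 0\}$ (for $\nN(B)$) or in the Morse-Bott torus $\{\rho = 0\}$ (for $\nN(\iI \cup \p M)$), and $f(0) > 0$ forces $a(s) \to +\infty$.

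Third, I would verify that these two pieces glue smoothly across $\{\rho = r - \delta\}$: on the $P_\psi$-side the surface is the page $\{\phi = \tau\}$ sitting at height $\sigma$, while on the $\nN(\cdot)$-side it is the unique integral surface of the ODE started from $\rho(0) = r-\delta$, $a(0) = 0$. Since $f$ vanishes and $g$ is constant in the overlap band, the page itself is also a solution of the degenerate ODE, so the two descriptions agree and glue in $C^\infty$. Smooth dependence on $(\sigma,\tau) \in \RR \times S^1$ is built into both constructions. Finally, the foliation property of $\{S^{(i)}_{\sigma,\tau}\}$ is immediate from the parametrization: projection to $S^1$ recovers $\tau = \phi$ on the interior of a fiber of $\pi_i$, and the $\RR$-translation action recovers $\sigma$.

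The main obstacle is the asymptotic analysis near $\rho = 0$. In the binding case, one must check that the specific decay rate dictated by the ODE matches the expected eigenvalue of the asymptotic operator $\mathbf{A}_\gamma$ and realizes the extremal winding, so that the resulting end is genuinely asymptotic to a simply covered Reeb orbit with the expected Conley--Zehnder data. In the Morse--Bott case along an interface or boundary torus, one additionally has to identify the limit within the $S^1$-family of orbits and verify that the slice varies smoothly with $\tau$ even as the asymptotic orbit moves; both of these are handled exactly as in \cite{Wendl:openbook}*{\S 3}, using the explicit form of the ODE together with the fact that $(f,g)$ has been chosen so that $f'/g'$ is irrational near $\rho = 0$, guaranteeing nondegeneracy at the binding and Morse-Bott behavior at the tori.
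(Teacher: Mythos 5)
Your proposal is correct and follows essentially the same route as the paper, which at this point simply invokes the construction of \cite{Wendl:openbook}*{\S 3}; you have reproduced that construction's content (pages are $J_0$-holomorphic where $\xi_0$ is integrable, and the Cauchy--Riemann equation reduces to the ODE $a'=f$, $\rho'=f'/(D\mu)$ in the normal-form neighborhoods). The one point to keep in mind is that the claim that $\rho(s)\to 0$ only as $s\to\infty$ (rather than in finite time) relies on the behavior of $\mu$ and $f'$ near $\rho=0$ forced by smoothness of $J_0$ there, but this is exactly part of the asymptotic analysis you correctly identify and defer to the cited reference.
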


Denote by $\fF^{(i)}_0$ for $i=0,\ldots,N$ the resulting foliation on the
interior of $\RR\times (M_i \setminus (B_i \cup \iI_i))$, whose leaves can 
each be parametrized by an embedded finite energy $J_0$-holomorphic curve
$$
u^{(i)}_{\sigma,\tau} : \dot{\Sigma}_i \to \RR \times M'.
$$
The collection of all
these curves together with the trivial cylinders over their asymptotic orbits
(which include all orbits in $B \cup \iI \cup \p M$) defines a 
$J_0$-holomorphic finite energy foliation $\fF_0$ of~$M$, as defined in 
\cites{HWZ:foliations,Wendl:OTfol}.  It's
important however to be aware that this foliation is not generally 
\emph{stable}, due to the following index calculation.  From now on we
assume that $\hH_\epsilon$ has the properties specified in
Lemma~\ref{lemma:dynamics}.

\begin{prop}
\label{prop:index}
$\ind\big(u^{(i)}_{\sigma,\tau}\big) = 2 - 2 g_i$.
\end{prop}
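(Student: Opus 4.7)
The plan is to apply the normal-bundle form of Riemann--Roch \eqref{eqn:normalCR} to the embedded curve $u = u_{\sigma,\tau}^{(i)}$, which has only positive ends, one for each connected component of $B_i \cup \iI_i \cup \p M_i$ touched by the page. Writing $n_i$ for the total number of such punctures we have $\chi(\dot{\Sigma}_i) = 2 - 2g_i - n_i$, and the formula reads
\[
\ind(u) = (2 - 2g_i - n_i) + 2 c_1^\Phi(N_u) + \sum_{z \in \Gamma^+} \muCZ^\Phi(\gamma_z - \epsilon).
\]
The claim $\ind(u) = 2 - 2g_i$ is therefore equivalent to establishing $2 c_1^\Phi(N_u) + \sum_z \muCZ^\Phi(\gamma_z - \epsilon) = n_i$ for a suitable asymptotic trivialization~$\Phi$.

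I would take $\Phi$ to be the unitary framing of $\xi$ along each asymptotic orbit determined by the distinguished coordinates $(\theta,\rho,\phi)$: namely $\p_\rho$ as the first real basis vector together with the $\xi$-component of $\p_\phi$ as the second. For a binding orbit $\gamma \subset B_i$, Lemma~\ref{lemma:dynamics} directly gives $\muCZ^\Phi(\gamma) = 1$. For a Morse--Bott orbit on an interface or boundary torus $T = \{\rho = \rho_0\}$, the kernel of $\mathbf{A}_\gamma$ is spanned by the tangent direction to the Morse--Bott family, which is the characteristic line $-g(\rho_0)\p_\theta + f(\rho_0)\p_\phi$ on~$T$. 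This eigenfunction has winding $0$ in the $\Phi$-trivialization, and the convexity inequality $f'g'' - f''g' > 0$ imposed in the proof of Lemma~\ref{lemma:dynamics} forces the two adjacent nonzero eigenvalues of $\mathbf{A}_\gamma$ to have windings $\pm 1$. Shifting by $-\epsilon$ for small $\epsilon > 0$ thus yields $\alpha_-^\Phi(\gamma - \epsilon) = 0$ and $\alpha_+^\Phi(\gamma - \epsilon) = 1$, and then \eqref{eqn:CZwinding} delivers $\muCZ^\Phi(\gamma - \epsilon) = 1$ as well.

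The remaining ingredient is to show $c_1^\Phi(N_u) = 0$. My plan is to exhibit a nowhere-vanishing global section of~$N_u$ asymptotic to the second basis vector of~$\Phi$ at each puncture. Since $S^{(i)}_{\sigma,\tau}$ is a graph over a page of $\pi_i$, and the coordinate vector field $\p_\phi$ is transverse to the pages both in $\nN(B \cup \iI \cup \p M)$ and on the integrable region $P_\psi$ (where $\p_\phi$ is the positive normal to the fibers of $\pi$), the $J_0$-orthogonal projection of~$\p_\phi$ to the complement of $Tu$ defines a smooth, nowhere-vanishing section of~$N_u$. A brief calculation shows that its asymptotic limit along each end is the $\xi$-component of~$\p_\phi$, which is precisely the second vector of~$\Phi$. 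Hence $c_1^\Phi(N_u) = 0$, and assembling everything gives $\ind(u) = (2 - 2g_i - n_i) + 0 + n_i = 2 - 2g_i$.

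The step most likely to require care is the Morse--Bott Conley--Zehnder computation: one must confirm that the adjacent nonzero eigenvalues of $\mathbf{A}_\gamma$ in the coordinate trivialization have winding exactly $\pm 1$, not something larger. This reduces to an explicit spectral analysis of the linearized Reeb flow on a neighborhood of~$T$, which is straightforward given the normal form $\lambda_0 = f(\rho)\,d\theta + g(\rho)\,d\phi$ but must be tracked carefully so that the winding bookkeeping in \eqref{eqn:CZwinding} assembles into exactly~$n_i$.
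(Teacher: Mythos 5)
Your proposal follows essentially the same route as the paper's proof: apply the punctured Riemann--Roch formula \eqref{eqn:normalCR} in the coordinate trivialization~$\Phi$, show that every asymptotic orbit contributes $\muCZ^\Phi(\gamma-\epsilon)=1$, and kill $c_1^\Phi(N_u)$ by exhibiting a nowhere-vanishing normal section built from~$\p_\phi$ (the paper takes the $J_0$-complex span of a global vector field equal to $\p_\phi$ in the coordinate neighborhoods and to $X_0$ elsewhere, which is the same device); the final bookkeeping $(2-2g_i-n_i)+n_i=2-2g_i$ is identical.

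One intermediate assertion is false as stated, though your conclusion from it is correct. The two eigenvalues of $\mathbf{A}_\gamma$ adjacent to $0$ cannot have windings $\pm 1$: the winding function on $\sigma(\mathbf{A}_\gamma)$ attains every integer exactly twice counting multiplicity, and at a Morse--Bott torus the kernel is only one-dimensional, so the value $0$ must be attained by exactly one \emph{nonzero} eigenvalue. The real content of the step is that this second winding-zero eigenvalue is \emph{negative}; granting that, $\alpha_-^\Phi(\gamma-\epsilon)=0$ and $\alpha_+^\Phi(\gamma-\epsilon)=1$ follow and $\muCZ^\Phi(\gamma-\epsilon)=1$ as you claim. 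The paper settles the sign without any spectral computation: since $u^{(i)}_{\sigma,\tau}$ is disjoint from $\RR\times(\iI\cup\p M)$ and approaches $\gamma$ with asymptotic winding $0$, and the asymptotic winding at a positive end is realized by an eigenfunction of $\mathbf{A}_\gamma$ with negative eigenvalue, there must exist a negative eigenvalue of winding $0$. Your convexity condition $f'g''-f''g'>0$ would also determine the sign by direct analysis of the linearized flow, but you should replace the ``windings $\pm1$'' statement with the correct one (the eigenvalue just below $0$ has winding $0$; the smallest positive winding among eigenvalues above $0$ is then~$1$) before the bookkeeping in \eqref{eqn:CZwinding} goes through.
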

\begin{proof}
Let $\Phi$ denote the natural trivialization of $\xi_0$ determined by the
$(\theta,\rho,\phi)$-coordinates along each of the asymptotic orbits
of~$u^{(i)}_{\sigma,\tau}$.  These orbits are in general a mix of
nondegenerate binding circles $\gamma \subset B_i$ with $\muCZ^\Phi(\gamma) = 1$
and Morse-Bott orbits that belong to $S^1$-families foliating $\iI \cup \p M$.
If $\gamma$ is one of the latter, then we observe that since 
$u^{(i)}_{\sigma,\tau}$ doesn't intersect $\RR \times (\iI \cup \p M)$,
the asymptotic winding of $u^{(i)}_{\sigma,\tau}$ as it approaches~$\gamma$
matches the winding of any nontrivial section in $\ker\mathbf{A}_\gamma$,
which is zero in the chosen coordinates.  Thus for sufficiently small
$\epsilon > 0$, the two largest negative eigenvalues of
$\mathbf{A}_\gamma - \epsilon$ both have zero winding, implying
$\alpha^\Phi_-(\gamma - \epsilon) = 0$ and
$p(\gamma - \epsilon) = 1$, hence by \eqref{eqn:CZwinding},
\begin{equation}
\label{eqn:CZMB}
\muCZ^\Phi(\gamma - \epsilon) = 2\alpha^\Phi_-(\gamma - \epsilon) +
p(\gamma - \epsilon) = 1.
\end{equation}

Since $u^{(i)}_{\sigma,\tau}$ projects to an embedding in~$M'$, it is 
everywhere transverse to the complex subspace in $T(\RR\times M')$ spanned
by $\p_t$ and $X_0$, though asymptotically $u^{(i)}_{\sigma,\tau}$ becomes
tangent to this space.  We can thus define a sensible normal bundle 
$N \to \dot{\Sigma}_i$ for $u^{(i)}_{\sigma,\tau}$ as follows: let
$X$ denote the smooth vector field on $M' \setminus (B \cup \iI \cup \p M)$
that equals $\p_\phi$ in every $(\theta,\rho,\phi)$-coordinate neighborhood
(except at $\{ \rho = 0 \}$, where this is not well defined), and $X_0$
everywhere outside of this.  Then the $J_0$-complex span of this vector field
defines a bundle that extends smoothly over $B \cup \iI \cup \p M$, and we
define the normal bundle $N \to \dot{\Sigma}_i$ to be the restriction of
this bundle to the image of~$u^{(i)}_{\sigma,\tau}$.  From this construction
it is clear that $c_1^\Phi(N) = 0$.  Now since $u^{(i)}_{\sigma,\tau}$
is embedded, its index is the index of the normal Cauchy-Riemann operator
on the bundle $N \to \dot{\Sigma}_i$, so by \eqref{eqn:normalCR},
$$
\ind\left(u^{(i)}_{\sigma,\tau}\right) = \chi(\dot{\Sigma}_i) + 2 c_1^\Phi(N) +
\sum_{\gamma} \muCZ^\Phi(\gamma - \epsilon) = \chi(\Sigma_i) = 2 - 2 g_i,
$$
where the summation is over all the asymptotic orbits of 
$u^{(i)}_{\sigma,\tau}$, whose Conley-Zehnder indices thus cancel out the
terms in $\chi(\dot{\Sigma}_i)$ resulting from the punctures.
\end{proof}

From this calculation it follows that the higher genus curves in
$\fF_0$ will vanish under generic perturbations of the data.  In contrast,
the genus zero curves have exactly the right properties to apply the
following useful perturbation result
(cf.~\cite{Wendl:thesis}*{Theorem~4.5.44} or \cite{Wendl:BP1}):

\begin{IFT}
Assume $M$ is any closed $3$-manifold with stable Hamiltonian structure
$\hH = (\lambda,\omega)$, $J \in \jJ(\hH)$, and
$$
u = \left(u^\RR,u^M\right) : \dot{\Sigma} \setminus \Gamma \to\RR\times M
$$
is a finite energy $J$-holomorphic curve with positive/negative punctures
$\Gamma^\pm \subset \Sigma$ and the following properties:
\begin{enumerate}
\item $u$ is embedded and asymptotic to simply covered periodic
orbits at each puncture, and satisfies $\delta_\infty(u) = 0$.
\item $\dot{\Sigma}$ has genus zero.
\item All asymptotic orbits $\gamma_z$ of $u$ for $z \in \Gamma^\pm$ 
are either nondegenerate or belong to $S^1$-parametrized Morse-Bott families
foliating tori, and
$$
p(\gamma_z \mp \epsilon) = 1
$$
for all $z \in \Gamma^\pm$ and sufficiently small $\epsilon > 0$.
\item $\ind(u) = 2$.
\end{enumerate}
Then $u$ is Fredholm regular and belongs to a
smooth $2$-parameter family of embedded curves
$$
u_{(\sigma,\tau)} = \left(u_\tau^\RR + \sigma,u_\tau^M\right) : 
\dot{\Sigma} \to \RR\times M,
\qquad (\sigma,\tau) \in \RR\times (-1,1)
$$
with $u_{(0,0)} = u$,
whose images foliate an open neighborhood of $u(\dot{\Sigma})$
in $\RR \times M$.  Moreover, the maps $u_\tau^M : \dot{\Sigma} \to M$
are all embedded and foliate an open neighborhood of $u^M(\dot{\Sigma})$
in~$M$, and if $\gamma_z^\tau$ denotes a degenerate Morse-Bott 
asymptotic orbit of $u_{(\sigma,\tau)}$ for some fixed puncture $z \in \Gamma$,
then the map $\tau \mapsto \gamma_z^\tau$ parametrizes a neighborhood of
$\gamma_z^0$ in its $S^1$-family of orbits.
\end{IFT}

Using this and a simple topological argument in \cite{Wendl:openbook},
it follows that whenever $g_i=0$, the family
$u^{(i)}_{\sigma,\tau}$ perturbs smoothly along with any sufficiently small
perturbation of~$J_0$.  In particular, picking $\epsilon > 0$ small and 
$J_\epsilon \in \jJ(\hH_\epsilon)$ close to~$J_0$, 
there is a corresponding family
of $J_\epsilon$-holomorphic curves in $\RR \times M_i$ that project to
a blown up summed open book on $M_i$ that is $C^\infty$-close to the original one.
Perturbing $\lambda_\epsilon$ a little bit further outside a suitable 
neighborhood of $B \cup \iI \cup \p M$, we can then also turn
$\lambda_\epsilon$ into a globally Morse-Bott contact form, and a
corresponding perturbation of~$J_\epsilon$ makes the latter Fredholm regular.
This proves the existence part of Theorem~\ref{thm:openbook}.  We will
continue to denote the $J_\epsilon$-holomorphic pages constructed in
this way by
$$
u^{(i)}_{\sigma,\tau} : \dot{\Sigma}_i \to \RR \times M_i,
$$
for all $i=0,\ldots,N$ with $g_i=0$.

\subsubsection*{Uniqueness}

Despite their obvious instability, the higher genus curves in the
foliation $\fF_0$ are useful due to the following uniqueness result based
on intersection theory.  Here $m_0 \in \NN$ denotes the multiplicity bound
from Lemma~\ref{lemma:dynamics}, which we can assume to be arbitrarily large.

\begin{prop}
\label{prop:uniqueness}
Suppose $v : \dot{\Sigma} \to \RR \times M'$ is a somewhere injective 
finite energy $J_0$-holomorphic curve that intersects the interior of
$\RR \times M_i$ and has all its
positive ends asymptotic to orbits in $B \cup \iI \cup \p M$, where the
orbits in~$B_i$ each have covering multiplicity at most~$m_0$.
Then~$v$ parametrizes one of the surfaces~$S^{(i)}_{\sigma,\tau}$.
\end{prop}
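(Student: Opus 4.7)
The plan is to argue by intersection theory, showing that the Siefring intersection number $v * u^{(i)}_{\sigma,\tau}$ between $v$ and any leaf of the foliation $\fF^{(i)}_0$ from Proposition~\ref{prop:holOBD} vanishes, and then invoking positivity of intersections to conclude that $v$ parametrizes a leaf. The leaves $u^{(i)}_{\sigma,\tau}$ form a smooth two-parameter family whose asymptotic orbits vary through Morse-Bott tori in $\iI \cup \p M$ that are disjoint from the images of the leaves themselves, so the Morse-Bott extension of Siefring intersection theory (Remark~\ref{remark:MorseBott}) applies, and $v * u^{(i)}_{\sigma,\tau}$ is well-defined and independent of $(\sigma,\tau) \in \RR \times S^1$.

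The main computation is that this intersection number equals zero, via asymptotic winding estimates at shared orbits. At each binding orbit $\gamma \subset B_i$, the explicit graphical form of $u^{(i)}_{\sigma,\tau}$ in $(\theta,\rho,\phi)$-coordinates from Proposition~\ref{prop:holOBD} shows that its asymptotic winding with respect to the coordinate trivialization $\Phi$ is zero, and this is extremal: Lemma~\ref{lemma:dynamics} forces $\muCZ^\Phi(\gamma^k) = 1$ for every cover $\gamma^k$ with $k \le m_0$, which gives $\alpha^\Phi_-(\gamma^k) = 0$. By the asymptotic formula~\eqref{eqn:asympFormula}, any positive end of $v$ asymptotic to such a $\gamma^k$ must wind with $\wind_z^\Phi(v) \le \alpha^\Phi_-(\gamma^k) = 0$, and combined with the extremal winding of the leaf, the asymptotic contribution to $v * u^{(i)}_{\sigma,\tau}$ from this pair of ends vanishes. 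An analogous argument using the perturbed-index identity~\eqref{eqn:CZMB} handles any shared orbits on the Morse-Bott tori in $\iI \cup \p M$, where the leaves again wind extremally; and negative ends of $v$ (necessarily not covering any positive asymptote of $u^{(i)}_{\sigma,\tau}$) contribute nothing.

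With $v * u^{(i)}_{\sigma,\tau} = 0$ in hand, positivity of intersections implies that $v$ either has image disjoint from each leaf or coincides with it. Since $v$ meets the interior of $\RR \times M_i$ by hypothesis and the leaves foliate $\RR \times (M_i \setminus (B_i \cup \iI_i))$, it must coincide with some leaf $u^{(i)}_{\sigma_0,\tau_0}$; the degenerate possibility that $v$ lies entirely in $\RR \times (B_i \cup \iI_i)$ would force $v$ to cover a trivial cylinder over such an orbit, which one rules out separately using the hypothesis on the positive ends together with somewhere-injectivity. Somewhere injectivity of $v$ then promotes the image coincidence to a genuine parametrization of the leaf. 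The main technical obstacle will be the asymptotic bookkeeping at multiply-covered binding orbits: the bound $m_0$ enters precisely so that the Conley-Zehnder control from Lemma~\ref{lemma:dynamics} rules out positive hidden-intersection contributions at every end of $v$ that could approach $\gamma^k$ with $k \le m_0$, and one must verify the extremal-winding statement on both sides uniformly across all such $k$.
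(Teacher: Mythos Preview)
Your proposal has a genuine gap: you compute only the asymptotic contributions $i_\infty(v,u^{(i)}_{\sigma,\tau})$ and assert this equals $v * u^{(i)}_{\sigma,\tau}$, but the Siefring number is not determined by asymptotic winding data alone. It depends on the relative homology classes $[v]$ and $[u^{(i)}_{\sigma,\tau}]$; equivalently, $v * u = v \cdot u + i_\infty(v,u)$ for holomorphic curves with distinct images, so establishing $i_\infty = 0$ still leaves the geometric count $v \cdot u$ undetermined---which is exactly what you are trying to control. Since nothing is assumed about $[v]$, there is no way to evaluate $v * u^{(i)}_{\sigma,\tau}$ from scratch, and the argument as written is circular.

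The paper's proof closes this gap with a homotopy trick that you omit. It argues by contradiction: if $v$ is not a leaf, it has an isolated intersection with some leaf, forcing $u^{(i)}_{\sigma,\tau} * v > 0$. Then, exploiting the fact that $u^{(i)}_{\sigma,\tau}$ has \emph{no negative ends}, one $\RR$-translates it into $[0,\infty)\times M'$ and homotopes $v$ (as an asymptotically cylindrical map, using homotopy invariance of~$*$) so that its intersection with $[0,\infty)\times M'$ consists only of trivial cylinders over its positive asymptotic orbits. This reduces the intersection number to a finite sum $\sum_\gamma u^{(i)}_{\sigma,\tau} * (\RR\times\gamma)$, which \emph{can} be computed purely from asymptotic data. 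The paper also shifts $\tau$ slightly so that the leaf shares no Morse-Bott orbit with~$v$---this avoids the Morse-Bott bookkeeping you propose and leaves only binding orbits $\gamma \subset B_i$ to analyze. At that point your extremal-winding computation is exactly right and gives each summand zero, yielding the contradiction. So your asymptotic analysis is correct, but it only becomes relevant \emph{after} the homotopy reduction; that reduction is the missing idea.
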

\begin{proof}
We use the homotopy invariant intersection number $u * v \in \ZZ$ 
defined by Siefring \cite{Siefring:intersection} for asymptotically
cylindrical maps $u$ and~$v$.  If $v$ does not parametrize any
leaf of $\fF^{(i)}_0$, then its
intersection with $\RR \times M_i$ implies that it has at least one isolated
positive intersection with
some leaf $S^{(i)}_{\sigma,\tau}$ with $J_0$-holomorphic
parametrization $u^{(i)}_{\sigma,\tau}$, hence
$$
u^{(i)}_{\sigma,\tau} * v > 0.
$$
By changing $\tau$ slightly, we may assume without loss of generality
that any ends of $u^{(i)}_{\sigma,\tau}$ approaching Morse-Bott orbits in
$\iI \cup \p M$ are disjoint from the positive asymptotic orbits of~$v$.
By homotopy invariance, we can also take advantage of the lack of negative
ends for $u^{(i)}_{\sigma,\tau}$ and $\RR$-translate it until
its image lies entirely in
$[0,\infty) \times M'$.  We can likewise change $v$ by a homotopy through
asymptotically cylindrical maps so that its intersection with
$[0,\infty) \times M'$ lies entirely in the trivial cylinders over its
positive asymptotic orbits, i.e.~in $[0,\infty) \times (B \cup \iI
\cup \p M)$.  An example of this kind of homotopy is shown in
Figure~\ref{fig:OBDuniqueness}.
The intersection number above is then a sum of the form
$$
u^{(i)}_{\sigma,\tau} * v = \sum_{\gamma} u^{(i)}_{\sigma,\tau} * (\RR \times \gamma),
$$
where the summation is over some collection of orbits 
$\gamma$ in $B \cup \iI \cup \p M$, and we use $\RR\times \gamma$ as
shorthand for a $J_0$-holomorphic curve that parametrizes the trivial
cylinder over~$\gamma$.  Note that $u^{(i)}_{\sigma,\tau}$ never has an
actual intersection with $\RR \times \gamma$, so the intersections
counted by $u^{(i)}_{\sigma,\tau} * (\RR \times \gamma)$ are \emph{asymptotic},
i.e.~they are hidden intersections that could potentially emerge from
infinity under small perturbations of the data.  Since we've arranged for
$u^{(i)}_{\sigma,\tau}$ and~$v$ to have no Morse-Bott orbits in common,
the asymptotic intersections vanish except possibly for orbits 
$\gamma \subset B_i$ of covering multiplicity at most~$m_0$.
As explained in \cite{Siefring:intersection}, these contributions can 
then be computed
in terms of asymptotic winding numbers: in the natural trivialization
$\Phi$ determined by the $(\theta,\rho,\phi)$-coordinates, each
of the relevant orbits~$\gamma$ has
$\muCZ^\Phi(\gamma) = 1 = 2\alpha^\Phi_-(\gamma) + 1$, hence
$\alpha^\Phi_-(\gamma)=0$ using~\eqref{eqn:CZwinding}.  
By construction, the asymptotic winding of
$u^{(i)}_{\sigma,\tau}$ as it approaches $\gamma$ is also zero, hence this
winding is extremal, which implies
$$
u^{(i)}_{\sigma,\tau} * (\RR \times \gamma) = 0.
$$
This is a contradiction.
\end{proof}

The above proof also works for a $J_\epsilon$-holomorphic curve
if it passes through a region that is foliated by $J_\epsilon$-holomorphic
pages.  In particular, since we've already shown this to be true in the
planar piece~$M_0$ for sufficiently small $\epsilon > 0$,
we deduce the following parallel result:

\begin{prop}
\label{prop:uniqueness2}
For all sufficiently small $\epsilon > 0$, the following holds:
if $v : \dot{\Sigma} \to \RR \times M'$ is a somewhere injective
finite energy $J_\epsilon$-holomorphic curve that intersects the interior of
$\RR \times M_0$ and has all its
positive ends asymptotic to orbits in $B \cup \iI \cup \p M$, where the
orbits in~$B_0$ have covering multiplicity at most~$m_0$, 
then~$v$ is a reparametrization of one of the 
$J_\epsilon$-holomorphic pages~$u^{(0)}_{\sigma,\tau}$.
\end{prop}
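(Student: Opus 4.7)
The plan is to repeat the argument of Proposition~\ref{prop:uniqueness} verbatim, with $J_0$ replaced by $J_\epsilon$ and the integrable pages of $\fF_0^{(0)}$ replaced by the perturbed $J_\epsilon$-holomorphic pages $u^{(0)}_{\sigma,\tau}$ that exist in $\RR\times M_0$ thanks to the Implicit Function Theorem discussion following Proposition~\ref{prop:index}. The only real content beyond the $J_0$-case is checking that the ingredients fed into Siefring's intersection theory survive the perturbation; once this is done, the same arithmetic forces the contradiction.

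First, fix $\epsilon > 0$ so small that the IFT yields a smooth $(\sigma,\tau)$-family of embedded, Fredholm regular, $J_\epsilon$-holomorphic pages $u^{(0)}_{\sigma,\tau} : \dot\Sigma_0 \to \RR\times M_0$ whose images foliate the interior of $\RR \times \bigl(M_0 \setminus (B_0\cup \iI_0 \cup \p M_0)\bigr)$; note that because $\lambda_\epsilon \equiv \lambda_0$ in the inner coordinate region $\{\rho \le r - \delta'\}$ near $B\cup \iI\cup \p M$, the binding orbits in $B_0$, their Conley-Zehnder indices (still equal to $1$ for covers up to multiplicity $m_0$), and the natural trivialization $\Phi$ are all unchanged. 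Now suppose $v : \dot\Sigma\to \RR\times M'$ is a somewhere injective $J_\epsilon$-holomorphic finite energy curve as in the statement and is not a reparametrization of any $u^{(0)}_{\sigma,\tau}$. Since $v$ meets the interior of $\RR\times M_0$ and this region is foliated by the $u^{(0)}_{\sigma,\tau}$, positivity of intersections for distinct somewhere injective $J_\epsilon$-curves produces at least one isolated intersection with some page, whence
\[
u^{(0)}_{\sigma,\tau} * v \;>\; 0.
\]

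Next, perturb $\tau$ slightly. By the IFT statement, the Morse-Bott asymptotic orbits $\gamma_z^\tau$ of $u^{(0)}_{\sigma,\tau}$ in $\iI\cup\p M$ trace out open arcs in their $S^1$-families as $\tau$ varies, so a generic small change in $\tau$ makes these orbits distinct from every asymptotic orbit of~$v$; using the version of the pairing valid for Morse-Bott orbits that are allowed to move within their families (see Remark~\ref{remark:MorseBott}), no new intersections appear from infinity at these ends, and $u^{(0)}_{\sigma,\tau}*v$ remains positive. Then $\RR$-translate $u^{(0)}_{\sigma,\tau}$ into $[T,\infty)\times M'$ for large $T$, and homotope $v$ through asymptotically cylindrical maps so that its intersection with $[T,\infty)\times M'$ lies in the union of trivial cylinders over its positive asymptotic orbits. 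Both operations preserve the intersection number by homotopy invariance, so
\[
u^{(0)}_{\sigma,\tau}*v \;=\; \sum_\gamma u^{(0)}_{\sigma,\tau} * (\RR\times \gamma),
\]
with the sum over the positive asymptotic orbits of~$v$. After the perturbation of $\tau$ only orbits $\gamma\subset B_0$ of covering multiplicity at most~$m_0$ contribute, and these are shared asymptotic orbits at which no actual intersections occur, so each term is a purely asymptotic contribution.

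Finally, each of these contributions is computed from relative asymptotic windings exactly as in Proposition~\ref{prop:uniqueness}: in the trivialization $\Phi$ we have $\muCZ^\Phi(\gamma)=1$, hence $\alpha^\Phi_-(\gamma)=0$, and by construction the asymptotic winding of $u^{(0)}_{\sigma,\tau}$ at $\gamma$ is also $0$ (it equals the winding of any nontrivial element of $\ker \mathbf{A}_\gamma$ in these coordinates, which persists to the perturbed pages since these are $C^\infty$-close to the $J_0$-pages near $B$, where the data are unchanged). This winding is therefore extremal, which forces $u^{(0)}_{\sigma,\tau}*(\RR\times\gamma)=0$ for each contributing $\gamma$, and hence $u^{(0)}_{\sigma,\tau}*v=0$, contradicting positivity. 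The main technical point to be careful with is precisely this last robustness claim: that the asymptotic winding of the perturbed page at each binding orbit coincides with that of the unperturbed page, so that the extremality argument driving the $J_0$-uniqueness proof goes through unchanged.
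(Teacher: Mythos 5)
Your proposal is correct and takes essentially the same route as the paper: the paper disposes of this proposition in one sentence by observing that the intersection-theoretic argument of Proposition~\ref{prop:uniqueness} applies verbatim to any almost complex structure for which the planar piece is foliated by holomorphic pages, which has already been established for $J_\epsilon$ with $\epsilon>0$ small via the implicit function theorem. Your more detailed verification — in particular the check that the extremal (zero) asymptotic winding at the binding orbits persists for the perturbed pages, since the data near $B$ are unchanged under the perturbation — is exactly the content implicitly invoked there.
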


\begin{figure}
\begin{center}
\includegraphics{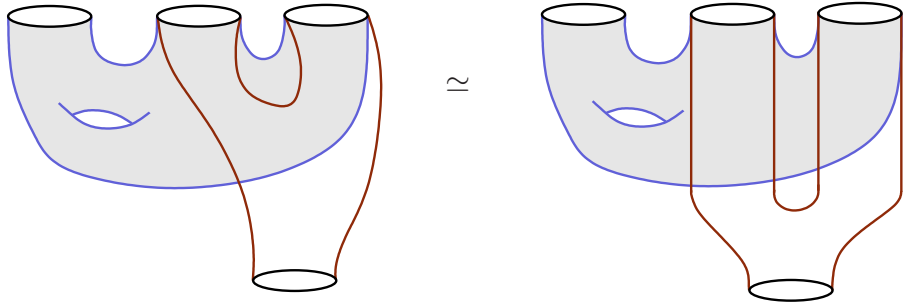}
\caption{\label{fig:OBDuniqueness}
A homotopy of two asymptotically cylindrical maps, reducing the computation
of the intersection
number to the intersection of one holomorphic
curve with the asymptotic trivial cylinders of the other.}
\end{center}
\end{figure}

We can now prove the uniqueness statement in Theorem~\ref{thm:openbook}.
Choose a sequence $\epsilon_k > 0$ converging to zero, denote
$\lambda_k := \lambda_{\epsilon_k}$ and $\xi_k := \ker\lambda_k$, 
and choose generic almost complex
structures $J_k \in \jJ(\hH_{\epsilon_k})$ with $J_k \to J_0$ in~$C^\infty$.
By small perturbations we can assume the forms $\lambda_k$ are all Morse-Bott
and have the properties listed in Lemma~\ref{lemma:dynamics}: in particular
the minimal periods of the orbits in $B_0 \cup \iI_0 \cup \p M_0$ are bounded by
an arbitrarily small number~$\tau > 0$, while all others are at least~$1$,
and the orbits in~$B_0$ have Conley-Zehnder index~$1$.  
We can also assume that for sufficiently large~$k$, planar 
$J_k$-holomorphic pages $u^{(i)}_{\sigma,\tau}$ in $\RR\times M_i$ 
exist whenever $g_i=0$, and hence Prop.~\ref{prop:uniqueness2} holds.
Now arguing by contradiction, suppose
that for every~$k$, there exists a finite energy $J_k$-holomorphic curve
$$
v_k : (\dot{\Sigma}_k,j_k) \to (\RR \times M',J_k)
$$
which is subordinate to~$\pi_0$ and is (for large~$k$) not equivalent to
any of the planar curves $u^{(i)}_{\sigma,\tau}$.  If $v_k$ has any
positive end asymptotic to an orbit in~$B_0$ or~$\iI_0$, then it must intersect 
the interior of $\RR\times M_0$ and Proposition~\ref{prop:uniqueness2}
already gives a contradiction.  We can therefore assume that
the positive ends of~$v_k$ approach simply covered orbits in distinct
connected components of~$\p M_0$.  This implies that they are all
somewhere injective.

\begin{lemma}
\label{lemma:compactness}
A subsequence of $v_k$ converges to one of the $J_0$-holomorphic leaves
of the foliation~$\fF_0$.
\end{lemma}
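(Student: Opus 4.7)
The plan is to apply SFT compactness to $v_k$ and then identify the limit using the intersection-theoretic uniqueness result Proposition~\ref{prop:uniqueness}. The key ingredients are a uniform energy bound and control on the possible breaking orbits.

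For the energy bound, each $v_k$ has only positive ends asymptotic to simply covered Morse-Bott orbits on finitely many distinct components of $\p M_0$, each of $\omega$-action bounded by a constant times $\tau_0$ (by Lemma~\ref{lemma:dynamics}). This gives a uniform Hofer energy bound, which can be made strictly smaller than the minimal $\omega$-action of any periodic orbit of $X_0$ lying outside $B_0 \cup \iI_0 \cup \p M_0$ by choosing $\tau_0$ small enough at the outset (those other orbits all have period at least $\tau_1$). With a uniform energy bound and smoothly converging data $\lambda_{\epsilon_k} \to \lambda_0$ and $J_k \to J_0$, the SFT compactness theorem \cite{SFTcompactness} produces a subsequence of $v_k$ converging to a $J_0$-holomorphic building $v_\infty$, whose top level inherits the positive ends of $v_k$ and whose bottom level has no negative ends. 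Moreover, every breaking orbit between consecutive levels must lie in $B_0 \cup \iI_0 \cup \p M_0 \subset B \cup \iI \cup \p M$.

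The next step is to identify each component of each level. I would choose the multiplicity bound $m_0$ at the outset large enough to exceed every covering multiplicity that can appear, which is a finite number determined by the energy bound divided by the minimal $\omega$-action of a simple orbit in $B_0 \cup \iI_0 \cup \p M_0$. Take any non-trivial somewhere injective $J_0$-holomorphic curve underlying a component of any level. All its positive ends are then at orbits in $B \cup \iI \cup \p M$ with multiplicities at most $m_0$, and since at least one positive end sits on $\p M_i$ for some $i$, the curve cannot be contained entirely in the closure of the exterior of that subdomain, so it intersects the interior of $\RR \times M_i$. Proposition~\ref{prop:uniqueness} then identifies this underlying curve with a leaf $u^{(i)}_{\sigma,\tau}$ of $\fF_0$.

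Finally, since every leaf of $\fF_0$ has only positive ends, the top level of $v_\infty$ admits no negative breakings, and therefore $v_\infty$ has only one level. The simply covered asymptotics of the $v_k$ then exclude any nontrivial multiple cover in the limit, so $v_\infty$ parametrizes a single leaf of $\fF_0$, as required. The main obstacle is coordinating the constants $\tau_0$, $\tau_1$, and $m_0$ so that the energy bound simultaneously rules out interior breakings (at orbits away from $B \cup \iI \cup \p M$) and keeps the multiplicities of limiting breaking orbits within the regime controlled by Proposition~\ref{prop:uniqueness}; once this is arranged, the identification is automatic.
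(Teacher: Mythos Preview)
Your argument has a genuine gap: you invoke SFT compactness without first bounding the genus of the domains $\dot{\Sigma}_k$. The curves $v_k$ in the setup are arbitrary finite energy $J_k$-holomorphic curves subordinate to~$\pi_0$, with no a priori constraint on their topological type, and the compactness theorem of \cite{SFTcompactness} does not apply to a sequence of curves whose genus may be unbounded. The paper devotes an entire step to this issue: it first applies the \emph{currents} version of Gromov compactness (as in \cite{Taubes:currents} or \cite{Hutchings:index}) to extract a subsequence with convergent relative homology classes~$[v_k]$, and then feeds this into the adjunction formula \eqref{eqn:adjunction} to bound $c_N(v_k)$ from above. Since every term in the formula \eqref{eqn:cN} for $c_N(v_k)$ other than $-\chi(\dot{\Sigma}_k)$ is determined by $[v_k]$ and by winding numbers that are constant along the Morse-Bott families, this yields a uniform upper bound on the genus. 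Only after this step is SFT compactness invoked.

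A secondary point: the paper also flags that $X_0$ is not globally Morse-Bott, so SFT compactness does not apply naively even once the genus is bounded. Your energy argument correctly forces all breaking orbits into $B_0 \cup \iI_0 \cup \p M_0$, and this is exactly what rescues the situation, but it is worth saying explicitly that this is why the proof in \cite{SFTcompactness} goes through despite the degenerate limiting dynamics. Finally, your coordination of $\tau_0$ and $m_0$ is unnecessary here: since the $v_k$ have only simply covered positive asymptotics at distinct components of $\p M_0$, the top-level components of the limit inherit this property directly, so Proposition~\ref{prop:uniqueness} applies with no multiplicity bookkeeping, and the paper's argument that the building has a single nontrivial component (using that distinct leaves approach distinct orbits, hence cannot be joined by nodes) is cleaner than appealing to multiplicity bounds.
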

\begin{proof}
We proceed in three steps.

\textsl{Step~1: Energy bounds.}  We use the stable Hamiltonian structure
$\hH_{\epsilon_k} = (\lambda_k,\omega)$ to define the energy of~$v_k$.
To be precise, choose $c_0 > 0$ small enough so that $\omega + d(t \lambda_0)$
is symplectic on $[-c_0,c_0] \times M'$; the same is then true for all
$\omega + d(t \lambda_k)$ with $k$ sufficiently large, so following
\eqref{eqn:omegaphi} and \eqref{eqn:energy}, define
$$
E_k(v_k) = \int_{\dot{\Sigma}_k} v_k^*\omega + 
\sup_{\varphi \in \tT} \int_{\dot{\Sigma}_k} v_k^*d(\varphi\lambda_k),
$$
where $\tT = \{ \varphi \in C^\infty(\RR, (-c_0,c_0)) \ |\ \varphi' > 0 \}$.
Since $\omega$ is exact, $E_k(v_k)$ depends only
on the asymptotic behavior of~$v_k$.  Now since the positive ends all
approach simple orbits in distinct connected components of~$\p M_0$, the 
number of ends and sum of their periods are uniformly bounded,
implying a uniform bound on $E_k(v_k)$.

\textsl{Step~2: Genus bounds.}
After taking a subsequence we may assume that all the curves $v_k$ have 
the same number of positive and negative punctures.
It is still possible however that the surfaces $\dot{\Sigma}_k$ could have 
unbounded topology, i.e.~their genus could blow up as $k \to \infty$.  To 
preclude this, we apply the currents version of Gromov compactness,
see \cite{Taubes:currents}*{Prop.~3.3} or 
\cite{Hutchings:index}*{Lemma~9.9}.
The key fact is that since $E_k(v_k)$ is uniformly bounded,
$\hH_k \to \hH_0$ and $J_k \to J_0$, $v_k$ as a sequence of currents 
has a convergent subsequence, and this implies in particular that the
relative homology classes $[v_k]$ for this subsequence converge.  
We now plug this into the adjunction formula \eqref{eqn:adjunction} 
for punctured holomorphic curves, which implies
$$
v_k * v_k \ge 2\left[\delta(v_k) + \delta_\infty(v_k)\right] + c_N(v_k)
\ge c_N(v_k).
$$
Both the right and left hand
sides of this expression depend only on $[v_k]$ and on certain integer
valued winding numbers of eigenfunctions at the asymptotic orbits 
of~$v_k$.  As orbits vary in a Morse-Bott family that all have the same
minimal period, these winding numbers remain constant, thus by the
convergence of $[v_k]$, $v_k * v_k$ converges to a fixed integer,
implying an upper bound on $c_N(v_k)$ for large~$k$.
The latter can be written as $c_1^\Phi(v_k^*\xi_k) - \chi(\dot{\Sigma}_k)$
plus more winding numbers of eigenfunctions, thus every term
other than $\chi(\dot{\Sigma}_k)$ converges, and we obtain a uniform
upper bound on $-\chi(\dot{\Sigma}_k)$, or equivalently, an upper
bound on the genus of $\dot{\Sigma}_k$.

\textsl{Step~3: SFT compactness.}
We can now assume the domains $\dot{\Sigma}_k$ are a fixed surface
$\dot{\Sigma}$, so the sequence $v_k$ with uniform energy bound
$E_k(v_k) < C$ satisfies the compactness theorem of Symplectic Field Theory
\cite{SFTcompactness}.  There is one subtle point to be careful of here:
since $X_0$ is not a Morse-Bott vector field, it is not clear at first
whether the SFT compactness theory can be applied as
$\hH_{\epsilon_k} \to \hH_0$.  What saves us is the fact that $v_k$ is
asymptotic at~$+\infty$ to orbits with arbitrarily
small period: then for energy reasons, we may assume
the only orbits that can appear
under breaking or bubbling are other orbits in $B_0 \cup \iI_0 \cup \p M_0$, 
all of which are Morse-Bott.  With this observation, the proof of SFT
compactness in \cite{SFTcompactness} goes through unchanged.  We can
thus assume that $v_k$ converges to a $J_0$-holomorphic building~$v_\infty$.
The positive asymptotic orbits of $v_\infty$ are all simply
covered and lie in distinct connected components of $\p M_0$,
thus the top level of $v_\infty$ contains at least one somewhere injective
curve $v_+$ that is subordinate to~$\pi_0$.  Then Prop.~\ref{prop:uniqueness}
implies that~$v_+$ parametrizes a leaf of the foliation $\fF_0$, so it
has no negative ends.  The same is true for every other top level
component of~$v_\infty$ unless it is a trivial cylinder, and nontrivial
curves must all be distinct since they approach distinct orbits at their
positive ends.  It follows that they do not intersect each other, so there 
is no possibility of nodes connecting them, and the building must be
disconnected unless it consists of only a single component, namely~$v_+$.
\end{proof}

We are now just about done: the implicit function theorem implies that
if the limit $v_\infty = \lim v_k$ has genus zero, then $v_k$ is
always one of the $J_k$-holomorphic pages $u^{(i)}_{\sigma,\tau}$
for sufficiently large~$k$.  Likewise if $v_\infty$ has genus $g > 0$, then
$\ind(v_k) = \ind(v_\infty) = 2 - 2g \le 0$ by Prop.~\ref{prop:index}, 
yet $v_k$ must be Fredholm regular since $J_k$ was chosen generically,
and this also gives a contradiction.

\newpage

\section{Proofs of the main results}
\label{sec:proofs}

\subsection{Nonfillability}
\label{sec:nonfillable}

In preparation for the arguments that follow, we recall that every strong
symplectic filling can be \emph{completed} by attaching a cylindrical end.
To be precise, assume $(M,\xi)$ is a closed, connected 
contact $3$-manifold with 
positive contact form $\alpha$, and for any two smooth functions 
$f, g : M \to [-\infty,\infty]$ with $g > f$, define a subdomain of the
symplectization $(\RR \times M, d(e^t \alpha))$ by
\begin{equation}
\label{eqn:cylStein}
\sS_f^g = \{ (t,m) \in \RR \times M\ |\ f(m) \le t \le g(m)\ \}.
\end{equation}
Here we include the cases $f \equiv -\infty$ and $g \equiv +\infty$
so that $\sS_f^g$ may be unbounded.  Now suppose $M = \p W$, where
$(W,\omega)$ is a (not necessarily compact) symplectic manifold with
contact type boundary, and $\lambda$ is a primitive of $\omega$ defined
near $\p W$ such that
$\lambda|_M = e^f \alpha$ for some smooth function $f : M \to \RR$.
Then using the flow of the Liouville vector field $Y$ defined by
$\iota_Y\omega = \lambda$, one can identify a neighborhood of~$M$ in
$(W,\omega)$ symplectically with a neighborhood of $\p\sS_{-\infty}^f$
in $(\sS_{-\infty}^f,d(e^t\alpha))$.  As a consequence, one can 
symplectically glue the cylindrical end $(\sS_f^\infty,d(e^t\alpha))$ to
$(W,\omega)$ along~$M$, giving a noncompact symplectic
manifold
$$
(W^\infty,\omega) := (W,\omega) \cup_M (\sS_f^\infty,d(e^t\alpha)),
$$
which necessarily contains the half-symplectization $([T,\infty) \times M,
d(e^t\alpha))$ whenever $T \in \RR$ is sufficiently large.

Assume now that $M$ contains a partially planar domain $M_0 \subset M$.  
Its planar piece
$M_0^P \subset M_0$ comes with a planar blown up summed open book
whose pages have closures in the interior of~$M_0$.
By Theorem~\ref{thm:openbook}, we can then find
a Morse-Bott contact form $\alpha$ on~$M$ and generic compatible almost 
complex structure~$J_+$ such that the planar pages in $M_0^P$ lift to an
$\RR$-invariant foliation 
by properly embedded $J_+$-holomorphic curves in $\RR\times M$,
whose asymptotic orbits are simply covered and have
minimal period less than an arbitrarily small number
$\tau_0 > 0$, while all other closed orbits of $X_{\alpha}$ in~$M$ have
period at least~$1$.  
Assume that $\alpha$ is the contact
form chosen for defining the symplectic cylindrical end in $(W^\infty,\omega)$.

Choose an almost complex structure $J$ on
$W^\infty$ which is compatible with~$\omega$, generic on~$W \subset W^\infty$ 
and matches $J_+$ on $\sS_f^\infty \subset W^\infty$.  
Then every leaf of the $J_+$-holomorphic foliation in $\RR\times M_0^P$
has an $\RR$-translation
that can be regarded as a properly embedded surface in $\sS_f^\infty \subset
W^\infty$ parametrized by a finite energy $J$-holomorphic curve.
The main idea used below is to show that these curves generate a 
moduli space of $J$-holomorphic curves that
must fill the entirety of~$W^\infty$, and leads to a contradiction in
either of the situations considered by Theorems~\ref{thm:nonseparating}
and~\ref{thm:nonfillable}.  To prove this, we need a deformation result
and a corresponding compactness result to show that the region filled by these curves
is open and closed respectively.  We shall prove somewhat more general
versions of these results than are immediately needed, as they are
also useful for other applications (e.g.~in 
\cites{NiederkruegerWendl,Wendl:fiberSums}).

\subsubsection{Deformation and compactness results}
\label{subsubsec:compactness}

For this section we generalize the above setup as follows: let $u_+ :
\dot{\Sigma} \to W^\infty$
denote one of the $J$-holomorphic planar pages living in the cylindrical
end of $(W^\infty,\omega)$, and pick any open neighborhood
$\uU \subset M$ and $T > 0$ such that
$$
u_+(\dot{\Sigma}) \subset [T,\infty) \times \uU.
$$
Now choose any data $(\alpha',\omega',J')$ as follows:
\begin{itemize}
\item $\alpha'$ is a Morse-Bott contact form on~$M$ that matches
$\alpha$ on $\uU \cup \nN(B_0 \cup \iI_0 \cup \p M_0)$ and has only
Reeb orbits of period at least~$1$ outside of $\nN(B_0 \cup \iI_0 \cup \p M_0)$
\item $\omega'$ is a sympectic form on~$W^\infty$ that matches
$d(e^t \alpha')$ on $\sS_f^\infty$
\item $J'$ is an $\omega'$-compatible almost complex structure on $W^\infty$ 
that has an $\RR$-invariant restriction 
$$
J_+' := J'|_{\sS_f^\infty}
$$
that is generic and compatible with~$\alpha'$ and matches $J_+$ on 
$\RR \times (\uU \cup \nN(B_0 \cup \iI_0 \cup \p M_0))$, and
$J'$ is generic on~$W$.
\end{itemize}

The advantage of this generalization is that fairly arbitrary changes
to the data can be accomodated outside a neighborhood of a single page,
which is useful for instance in the adaptation of these 
arguments for weak fillings (cf.~\cite{NiederkruegerWendl}).
Let $\mM^*(J')$ denote the moduli space of all somewhere injective
finite energy $J'$-holomorphic
curves in $W^\infty$, which is nonempty by construction since it 
contains~$u_+$, and define
$$
\mM_0^*(J') \subset \mM^*(J')
$$
to be the connected component of this space containing~$u_+$.  The curves
$u \in \mM_0^*(J')$ share all homotopy invariant properties of the
planar $J_+$-holomorphic pages in $\RR\times M$, in particular:
\begin{enumerate}
\item $\ind(u) = 2$,
\item $u * u = \delta(u) + \delta_\infty(u) = 0$.
\end{enumerate}
It follows that all curves in $\mM_0^*(J')$ are embedded. 
This situation is a slight variation on the setup that was considered
in \cite{AlbersBramhamWendl}*{\S 4}, only with the added complication
that curves in $\mM_0^*(J')$ may have two ends approaching the same
Morse-Bott Reeb orbit, which presents the danger of degeneration to
multiply covered curves.  The required deformation result is however
exactly the same: it depends on the fact that a neighborhood
of each embedded curve $u \in \mM_0^*(J')$ can be described by 
sections of its normal bundle which are nowhere vanishing, because they
satisfy a Cauchy-Riemann type equation and have vanishing first Chern 
number with respect to certain special trivializations at the ends.  

\begin{prop}[\cite{AlbersBramhamWendl}*{Theorem~4.7}]
\label{prop:IFT}
The moduli space $\mM_0^*(J')$ is a smooth $2$-dimensional manifold containing
only proper embeddings that never intersect each other:
in particular they foliate an open subset of~$W^\infty$.
\end{prop}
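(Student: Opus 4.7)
The plan is to verify the hypotheses of the Implicit Function Theorem stated earlier at every curve in $\mM_0^*(J')$, and then to invoke Siefring's intersection theory to rule out intersections between distinct curves. This is essentially the argument of \cite{AlbersBramhamWendl}*{Theorem~4.7}, with a small modification to account for the possibility that a given curve may have two ends asymptotic to the same Morse-Bott orbit.

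First I would argue that the basic invariants are preserved throughout the connected component. The reference curve $u_+$ is an embedded $J$-holomorphic planar page, so by construction $\ind(u_+) = 2$, its genus is zero, all its positive ends are simply covered and lie in Morse-Bott tori of $\iI_0 \cup \p M_0$ (or at binding orbits in $B_0$, which have $p(\gamma - \epsilon) = 1$ by the calculation in Prop.~\ref{prop:index}), and $u_+ * u_+ = 0$. Since $u_+$ is embedded and $u_+ * u_+ = 0$, the adjunction formula \eqref{eqn:adjunction} forces $\delta(u_+) = \delta_\infty(u_+) = 0$ and $c_N(u_+) = 0$. All of these quantities are homotopy invariant along connected families of asymptotically cylindrical curves whose asymptotic orbits vary continuously within Morse-Bott tori (using Remark~\ref{remark:MorseBott}), so they continue to hold for every $u \in \mM_0^*(J')$. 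In particular, each such $u$ is embedded with $\delta_\infty(u) = 0$.

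Next I would verify the hypothesis $p(\gamma_z \mp \epsilon) = 1$ at every asymptotic orbit of any $u \in \mM_0^*(J')$. The asymptotic orbits of $u$ lie in the same Morse-Bott tori (or binding circles) as those of $u_+$, and the computation in Prop.~\ref{prop:index} (and equation~\eqref{eqn:CZMB}) shows precisely that the two eigenvalues of $\mathbf{A}_\gamma$ straddling zero have consecutive winding numbers, which is the parity condition required. With all hypotheses in place, the IFT produces, for each $u \in \mM_0^*(J')$, a smooth $2$-parameter family of embedded $J'$-holomorphic curves whose images foliate a neighborhood of $u(\dot\Sigma)$ in $W^\infty$. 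This simultaneously gives the smooth $2$-manifold structure on $\mM_0^*(J')$ and shows that the union of images of curves in $\mM_0^*(J')$ is open in $W^\infty$. Properness of the embeddings is automatic from finite energy together with the asymptotically cylindrical behavior at each puncture.

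It remains to show that any two distinct curves in $\mM_0^*(J')$ have disjoint images. Given $u_0, u_1 \in \mM_0^*(J')$, I would first observe that for any two leaves of the local foliation near $u_+$, the pairing $u_+ * u_+'$ vanishes, since two distinct leaves are disjoint and the intersection pairing is bounded below by the actual algebraic intersection count. Homotopy invariance along a path in $\mM_0^*(J')$ connecting $u_0$ and $u_1$ to $u_+$ then yields $u_0 * u_1 = 0$; the motion of asymptotic orbits within Morse-Bott tori is handled by Remark~\ref{remark:MorseBott}, using the fact that the relevant asymptotic orbits are simply covered. Since two curves with non-identical images in a symplectization (or its filling) have at most $u_0 * u_1$ intersections, we conclude $u_0$ and $u_1$ are disjoint. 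I expect the main technical subtlety to be checking that the Morse-Bott version of the intersection number from \cite{SiefringWendl} agrees with the standard fixed-orbit version in our setting; this is precisely the situation covered by Remark~\ref{remark:MorseBott}, since disjointness of the foliation leaves from the Morse-Bott tori (away from the ends) is guaranteed by the construction in~\S\ref{subsec:bigTheorem}.
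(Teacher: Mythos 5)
Your overall strategy matches the paper's: the proposition is essentially \cite{AlbersBramhamWendl}*{Theorem~4.7}, whose mechanism is exactly what you describe (homotopy invariance of $\ind$, $u*u$, $\delta+\delta_\infty$ and the parity condition, followed by the normal-bundle/automatic-transversality argument and Siefring's pairing). One caveat on the deformation step: the Implicit Function Theorem as stated in \S\ref{subsec:bigTheorem} is for curves in the $\RR$-invariant setting $\RR\times M$, whereas curves in $\mM_0^*(J')$ may dip into $W$; you need the cobordism version from \cite{AlbersBramhamWendl}, which rests on the same fact that nontrivial sections of the normal bundle satisfying the linearized equation are nowhere vanishing because $c_1^\Phi(N_u)=0$ and the asymptotic windings are pinned.

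There is, however, a genuine gap in your disjointness argument. You claim $u_+ * u_+' = 0$ for two nearby leaves ``since two distinct leaves are disjoint and the intersection pairing is bounded below by the actual algebraic intersection count.'' That inequality runs the wrong way: $0 \le u\cdot v \le u*v$ means disjointness only gives $u_+*u_+' \ge 0$, not $=0$, because $u*v$ also counts hidden intersections at infinity, which can be strictly positive even for disjoint curves sharing asymptotic orbits (and your two leaves do share the binding orbits in $B_0$, and possibly Morse-Bott orbits). To get the vanishing you must show the asymptotic contributions are zero, which is the extremal-winding computation of Prop.~\ref{prop:uniqueness} (equivalently Lemma~\ref{lemma:intersection2}): at each shared orbit $\gamma$ one has $\alpha^\Phi_-(\gamma)=0$ and the actual asymptotic winding of each page is also~$0$, so the winding is extremal and no hidden intersections can emerge. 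With that substitution the rest of your argument (homotopy invariance via Remark~\ref{remark:MorseBott}, then $0\le u_0\cdot u_1\le u_0*u_1=0$) goes through.
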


The compactness result we need is a variation on
\cite{AlbersBramhamWendl}*{Theorem~4.8}, but somewhat more complicated
due to the appearance of multiple covers.  For the statement of the result,
recall that the compactification in \cite{SFTcompactness} for the space
of finite energy holomorphic curves in an almost complex manifold with
cylindrical ends consists of so-called \emph{stable holomorphic buildings}, 
which have one \emph{main level} and
potentially multiple \emph{upper} and \emph{lower levels}, 
each of which is a (perhaps disconnected) nodal holomorphic curve.  We will
be considering sequences of curves in $W^\infty$ that stay within a
bounded distance of the positive end, so there will be no lower levels
in the limit.  We shall use the
term ``smooth holomorphic curve'' to mean a holomorphic building with only
one level and no nodes.  The following variation on
Definition~\ref{defn:subordinate} will be convenient.

\begin{defn}
\label{defn:subordinate2}
A $J'$-holomorphic curve $u : \dot{\Sigma} \to W^\infty$ will be called
\emph{subordinate to~$\pi_0$} if it has only positive ends, all of
which approach Reeb orbits in $B_0 \cup \iI_0 \cup \p M_0$, with total
multiplicity at most~$1$ for each connected component of
$B_0 \cup \p M_0$ and at most~$2$ for each connected component of~$\iI_0$.
\end{defn}

Observe that all the curves in $\mM_0^*(J')$ are subordinate to~$\pi_0$.
The intersection argument in the proof of Prop.~\ref{prop:uniqueness}
now implies:
\begin{lemma}
\label{lemma:intersection2}
If $u \in \mM^*(J)$ is subordinate to~$\pi_0$, then $u * u_+ = 0$.
\end{lemma}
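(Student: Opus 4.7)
The plan is to adapt the intersection-theoretic computation carried out in the proof of Proposition~\ref{prop:uniqueness} to the present setting inside the completed filling. Two structural features make the same calculation go through: neither $u$ nor $u_+$ has any negative ends (the former by the definition of ``subordinate to~$\pi_0$,'' the latter because it is a planar $J$-holomorphic page in the cylindrical end), and all positive asymptotic orbits of $u_+$ are simply covered.

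First I would use the homotopy invariance of the Siefring pairing to bring both curves into a convenient standard position in which $u * u_+$ can be read off from asymptotic contributions. Concretely, $\RR$-translate $u_+$ very far up into the cylindrical end (permissible because $J$ is $\RR$-invariant on $\sS_f^\infty$, so translated pages remain $J$-holomorphic with the same asymptotics). Since $u$ has only positive ends, it can then be homotoped through asymptotically cylindrical maps with fixed asymptotic behavior so that its intersection with the high-end region $[T', \infty) \times M$ lies inside the trivial cylinders over its own positive asymptotic orbits, exactly as in Figure~\ref{fig:OBDuniqueness}. In this standard position, the computation reduces to
\[
u * u_+ \;=\; \sum_\gamma u_+ * (\RR \times \gamma),
\]
where the sum runs over the asymptotic orbits $\gamma$ of $u_+$.

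The sum then splits according to the three possible locations of $\gamma$ inside $B_0 \cup \iI_0 \cup \p M_0$. Morse-Bott orbits lying on tori in $\iI_0 \cup \p M_0$ can be eliminated by replacing $u_+$ with a nearby page in its $S^1$-family, chosen so that its Morse-Bott asymptotics are disjoint from the finitely many Morse-Bott asymptotic orbits of $u$; by the Morse-Bott extension of Siefring intersection theory (see Remark~\ref{remark:MorseBott} and \cite{SiefringWendl}) this perturbation does not change $u * u_+$. That leaves only the simply covered binding orbits $\gamma \subset B_0$. For each such orbit, the natural trivialization $\Phi$ determined by the binding coordinates gives $\muCZ^\Phi(\gamma) = 1$, hence $\alpha^\Phi_-(\gamma) = 0$ by \eqref{eqn:CZwinding}. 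The page $u_+$, parametrized as in Proposition~\ref{prop:holOBD}, winds zero times about $\gamma$ in this trivialization, which is extremal, and the same calculation used at the end of the proof of Proposition~\ref{prop:uniqueness} forces $u_+ * (\RR \times \gamma) = 0$. Summing over all shared orbits gives $u * u_+ = 0$.

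The main obstacle is the Morse-Bott bookkeeping: one must be careful that freely varying $u_+$ through its planar $S^1$-family and homotoping $u$ inside the cylindrical end do not secretly generate new intersections that would invalidate the reduction to asymptotic contributions. This is precisely what the stronger Morse-Bott Siefring theory controls, and is the content of Remark~\ref{remark:MorseBott} applied to pairs of disjoint curves sharing a simply covered Morse-Bott asymptotic orbit whose Morse-Bott torus is disjoint from both of them. With that bookkeeping in hand, the binding contribution vanishes by the extremal-winding calculation transported verbatim from the closed setting.
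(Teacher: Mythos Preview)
Your argument is correct and is exactly the approach the paper intends: the paper simply says ``the intersection argument in the proof of Prop.~\ref{prop:uniqueness} now implies'' the lemma, and you have written out that argument in the completed-filling setting. One small slip: after your homotopy it is $u$ that has been replaced by trivial cylinders in the high region, so the sum should run over the positive asymptotic orbits of~$u$, not of~$u_+$; your subsequent case analysis (perturbing $u_+$'s Morse-Bott asymptotics away from those of~$u$, then handling shared binding orbits via extremal winding) is already consistent with this corrected indexing.
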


\begin{prop}
\label{prop:compactness}
Choose an open subset $W_0 \subset W$ that contains $\p W$ and has compact
closure, and let $W_0^\infty = W_0 \cup_M \sS_f^\infty$.  Then there is
a finite set of index~$0$ curves 
$\Theta(W_0) \subset \mM^*(J')$ subordinate to~$\pi_0$ and 
with images in $\overline{W}_0^\infty$ such that the following holds.
Any sequence of
curves $u_k \in \mM_0^*(J')$ with images in $\overline{W}_0^\infty$
has a subsequence convergent (in the sense of \cite{SFTcompactness}) 
to one of the following:
\begin{enumerate}
\item A curve in $\mM_0^*(J')$
\item A holomorphic building with empty main level and one nontrivial
upper level consisting of a single connected curve that can be identified
(up to $\RR$-translation) with a curve in $\mM_0^*(J')$ with image
in~$\sS_f^\infty$
\item A $J'$-holomorphic building whose upper levels contain only 
covers of trivial cylinders, and 
main level consists of a connected double cover of a curve
in~$\Theta(W_0)$
\item A $J'$-holomorphic building whose upper levels contain only 
covers of trivial cylinders, and 
main level contains at most two connected components, which are 
curves in~$\Theta(W_0)$.
\end{enumerate}
\end{prop}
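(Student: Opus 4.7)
The plan is to extract a convergent subsequence via the SFT compactness theorem of~\cite{SFTcompactness} and then use intersection theory together with index counting to constrain the structure of the limiting holomorphic building. Uniform energy bounds hold because all $u_k \in \mM_0^*(J')$ share the same asymptotic Reeb orbits as~$u_+$ (varying at worst within Morse--Bott families of fixed period), so the $\omega'$-energy of each $u_k$ depends only on this fixed asymptotic data; uniform genus bounds hold because the genus is constant on connected components of the moduli space and $u_+$ has genus zero; all limiting orbits are Morse--Bott by construction of~$\alpha'$. Since the $u_k$ have only positive ends and remain in $\overline{W}_0^\infty$, the limiting building $u_\infty$ has one main level in $W^\infty$, finitely many upper levels in $(\RR\times M, J'_+)$, and no lower levels.

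Next I would analyze the upper levels. Subordination to~$\pi_0$ passes to the limit, so each nontrivial somewhere injective upper-level component has positive asymptotic orbits in $B_0 \cup \iI_0 \cup \p M_0$ satisfying the multiplicity restrictions of Definition~\ref{defn:subordinate2}. Any such component that is not a trivial cylinder must therefore intersect the interior of $\RR\times M_0$ and, by Proposition~\ref{prop:uniqueness} applied in the $J'_+$-holomorphic setting, must parametrize one of the planar pages~$u^{(0)}_{\sigma,\tau}$, i.e.~an element of $\mM_0^*(J')$ translated into~$\sS_f^\infty$. Combining this with Lemma~\ref{lemma:intersection2} ($v * u_+ = 0$ for any subordinate~$v$) and positivity of intersections, I would conclude that at most one nontrivial upper-level component appears; all others must be covers of trivial cylinders. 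When the main level is empty, this single page carries the entire limit, giving case~(2); otherwise all upper levels consist only of covers of trivial cylinders, as in cases~(1), (3), and~(4).

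For the main level, genericity of $J'$ on~$W$ guarantees that every somewhere injective main-level component~$v$ satisfies $\ind(v) \geq 0$. Index counting at the breaking orbits, combined with $\ind(u_k) = 2$ and the adjunction identity paired with the constraint $v * u_+ = 0$, forces each such $v$ to be embedded with $v * v = 0$ and $\ind(v) \in \{0,2\}$. An index-$2$ main-level component appearing alone with no breaking yields case~(1); an empty main level with a single page in the upper levels gives case~(2); two distinct index-$0$ components corresponding to the positive ends of $u_k$ splitting into separate limits give case~(4); and a connected double cover of a single index-$0$ curve, arising when two positive ends of $u_k$ degenerate onto the same Morse--Bott orbit, gives case~(3). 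Define $\Theta(W_0)$ to be the collection of index-$0$ somewhere injective main-level components that arise in this way. Finiteness of $\Theta(W_0)$ follows because only finitely many asymptotic configurations in $B_0 \cup \iI_0 \cup \p M_0$ are compatible with the fixed energy of the sequence, and for each such configuration the moduli space of index-$0$ somewhere injective $J'$-curves subordinate to~$\pi_0$ with image in~$\overline{W}_0^\infty$ is compact and $0$-dimensional, hence finite.

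The principal difficulty is the double-cover analysis of case~(3): because two positive ends of~$u_k$ may approach the same Morse--Bott orbit, double covers are genuinely unavoidable in the limit, and ruling out higher-order covers as well as hidden intersections with~$u_+$ at the breaking orbits will require careful use of the extremal winding properties of the relevant asymptotic eigenfunctions, as exploited in the proof of Proposition~\ref{prop:uniqueness}. Matching the Conley--Zehnder indices and winding numbers at these Morse--Bott ends, carried through the perturbation from $\hH_0$ to $\hH_{\epsilon_k}$, will be the most delicate bookkeeping in the argument.
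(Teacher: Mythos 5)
Your overall strategy coincides with the paper's: apply SFT compactness, then constrain the limit building by combining the intersection argument with $u_+$ (Lemma~\ref{lemma:intersection2}) with index counting under genericity. But two essential pieces are missing. The more serious one is your justification of the finiteness of $\Theta(W_0)$: you assert that the moduli space of somewhere injective index-$0$ curves subordinate to~$\pi_0$ in $\overline{W}_0^\infty$ is ``compact and $0$-dimensional, hence finite.'' Compactness is exactly what must be proved and is not automatic -- a $0$-dimensional moduli space in this setting can a priori fail to be compact because sequences of index-$0$ curves can themselves break into buildings. The paper's Step~6 devotes a separate argument to this, and the delicate case is a sequence of index-$0$ curves with one doubly covered asymptotic orbit degenerating into a building whose upper level contains a pair-of-pants connector whose two negative ends attach to two main-level index-$0$ components; genericity forces these to be the same curve $v_-$, and the configuration is excluded by a parity computation: the index formula gives $c_N(v_-) = -1$, so the adjunction formula makes $v_- * v_-$ odd, contradicting $0 = v_\infty * v_\infty = 4 (v_- * v_-)$. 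Note that this also contradicts your claim that every main-level component satisfies $v * v = 0$: that holds for the index-$2$ curves in $\mM_0^*(J')$, but the index-$0$ limit curves have \emph{odd} self-intersection number, and this oddness is precisely the tool the paper uses.

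The second gap is that the index bookkeeping which actually forces the limit into cases (1)--(4) is deferred rather than carried out. The paper introduces a constrained index $\widehat{\ind}$ that is additive across levels (plus $2$ per node), computes that an inverted pair-of-pants connector (two positive ends at a simple Morse--Bott orbit, one negative end at its double cover) has constrained index~$2$ -- which is what makes the double-cover inequality $\ind(v) + \sum_C \widehat{\ind}(C) = 2\ind(v') + 2 \ge 2$ close case~(3) -- and runs a tree-counting argument showing that any collection of ghost bubbles contributes at least~$4$ to the total index, thereby excluding nodal constant components. Without these computations one cannot rule out, for example, configurations combining a node with an inverted connector, or a double cover sitting in an upper level (excluded because the underlying simple curve there has index $\ge 1$ by genericity, pushing the total over~$2$). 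You correctly flag the double-cover analysis as ``the principal difficulty,'' but flagging it is not the same as resolving it; as written, the argument establishes the easy cases (1) and (2) and leaves (3), (4), and the finiteness of $\Theta(W_0)$ unproven.
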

\begin{proof}
Assume $u_k$ is a sequence of either index~$2$ curves in $\mM_0^*(J')$ or
index~$0$ curves subordinate to~$\pi_0$ with images in $\overline{W}_0^\infty$
and only simply covered asymptotic orbits.
By \cite{SFTcompactness}, $u_k$ has a subsequence converging to a 
stable $J'$-holomorphic building~$u_\infty$.  The main idea is to add up the indices
of all the connected components of~$u_\infty$ and use genericity to derive
restrictions on the configuration of~$u_\infty$.
To facilitate this, we introduce a variation on the
usual Fredholm index formula \eqref{eqn:index}: for any finite energy
holomorphic curve~$v : \dot{\Sigma} \to \RR \times M$ with positive
and negative asymptotic orbits $\{ \gamma_z\}_{z \in \Gamma^\pm}$,
choose a small number $\epsilon > 0$ and trivializations~$\Phi$ of the 
contact bundle along each~$\gamma_z$ 
and define the \emph{constrained} index
$$
\widehat{\ind}(v) = -\chi(\dot{\Sigma}) + 2 c_1^\Phi(v^*\xi) + 
\sum_{z \in \Gamma^+}
\muCZ^\Phi(\gamma_z - \epsilon) - \sum_{z \in \Gamma^-}
\muCZ^\Phi(\gamma_z - \epsilon).
$$
The only difference here from \eqref{eqn:index} is that at the negative
punctures we take $\muCZ^\Phi(\gamma_z - \epsilon)$ instead of
$\muCZ^\Phi(\gamma_z + \epsilon)$, which geometrically means we compute
the virtual dimension of a space of curves whose negative ends have all
their Morse-Bott orbits fixed in place.  So for curves without negative
ends $\widehat{\ind}(v) = \ind(v)$, and the constrained index otherwise
has the advantage of being \emph{additive} across levels, i.e.~if the
building~$u_\infty$ has no nodes, then we obtain $\ind(u_k) = \ind(u_\infty)$ if the
latter is defined as the sum of the constrained indices for all its
connected components.  Observe that trivial cylinders over Reeb orbits always
have constrained index~$0$.  If~$u_\infty$ does have nodes, the formula remains true
after adding~$2$ for each node in the building, so we then take this 
as a definition of the index for a nodal curve or nodal holomorphic building.
We now proceed in several steps.

\textsl{Step~1: Curves in upper levels.}  We claim that every connected
component of~$u_\infty$ either has no negative ends or is a
cover of a trivial cylinder (in an upper level).  Indeed, curves in the
main level obviously have no negative ends, and if $v$ is an upper level
component with negative ends, the smallness of the periods in 
$B_0 \cup \iI_0 \cup \p M_0$
constrains these to approach other orbits in $B_0 \cup \iI_0 \cup \p M_0$,
as otherwise~$v$ would have negative energy.  Then if~$v$ does not cover
a trivial cylinder, an intersection argument carried out in
\cite{AlbersBramhamWendl}*{Proof of Theorem~4.8} implies that $v$
must intersect~$u_+$, contradicting Lemma~\ref{lemma:intersection2} above.
The key idea here is to consider the asymptotic winding numbers that
control holomorphic curves approaching orbits at $B_0 \cup \iI_0 \cup \p M_0$,
which differ for positive and negative ends at each of these orbits, and
thus force~$v$ to intersect~$u_+$ in the projection to~$M$.
We refer to \cite{AlbersBramhamWendl} for the details.

\textsl{Step~2: Indices of connectors.}
Borrowing some terminology from Embedded Contact Homology, we refer to
branched multiple covers of trivial cylinders as \emph{connectors}.
These can appear in the upper levels of~$u_\infty$, but can never have any
curves above them except for further covers of trivial cylinders, 
due to Step~1.  Since the positive ends
of~$u_\infty$ approach any given orbit in $B_0 \cup \iI_0 \cup \p M_0$ 
with total multiplicity at most~$2$, only the following
types of connectors can appear, both with genus zero:
\begin{itemize}
\item \emph{Pair-of-pants} connectors: these have one positive end at a
doubly covered orbit and two negative ends at the same simply covered orbit.
\item \emph{Inverted pair-of-pants} connectors: with two positive ends
at the same simply covered orbit and one negative end at its double cover.
\end{itemize}
The second variety will be especially important, and we'll refer to it
for short as an \emph{inverted connector}.
As we computed in \eqref{eqn:CZMB}, all of the simply covered Morse-Bott
orbits under consideration have $\muCZ^\Phi(\gamma - \epsilon) = 1$ in
the natural trivialization, and
in fact exactly the same argument produces the same result for
their multiple covers.  We thus find that the constrained
Fredholm index is~$0$ for a pair-of-pants connector and~$2$ for the inverted
variant.

\textsl{Step~3: Indices of multiple covers.}
Suppose~$v$ is a connected component of~$u_\infty$ which is not a cover of a
trivial cylinder: then it has no negative ends, and all its positive ends
must approach orbits in $B_0 \cup \iI_0 \cup \p M_0$ with total multiplicity
at most~$2$.  Thus if~$v$ is a
$k$-fold cover of a somewhere injective curve~$v'$, we have $k \in \{1,2\}$,
and all the asymptotic orbits of both $v$ and~$v'$ have
$\muCZ^\Phi(\gamma - \epsilon) = 1$ in the natural trivialization.
Assume $k=2$, and label the positive punctures of~$v$ as 
$\Gamma = \Gamma_1 \cup \Gamma_2$,
where a puncture is defined to belong to $\Gamma_2$ if its asymptotic orbit is
doubly covered, and $\Gamma_1$ otherwise.  If $\Gamma' =
\Gamma_1' \cup \Gamma_2'$ denotes the corresponding punctures of~$v'$,
whose orbits must all be distinct and simply covered, then
$\#\Gamma_2 = \#\Gamma_2'$ and $\#\Gamma_1 = 2\#\Gamma_1'$.  Both domains
must also have genus zero, so we have
\begin{equation*}
\begin{split}
\ind(v) &= -(2 - \#\Gamma) + 2 c_1^\Phi(v) + \#\Gamma =
-2 + 2(\#\Gamma_2' + 2\#\Gamma_1') + 2k c_1^\Phi(v'), \\
\ind(v') &= -(2 - \#\Gamma') + 2 c_1^\Phi(v') + \#\Gamma'
= -2 + 2(\#\Gamma_2' + \#\Gamma_1') + 2 c_1^\Phi(v'),
\end{split}
\end{equation*}
hence
\begin{equation}
\label{eqn:indexCover}
\ind(v) = k \ind(v') + 2(k-1) (1 - \#\Gamma_2).
\end{equation}
This formula also trivially holds if $k=1$.  This gives a
lower bound on $\ind(v)$ since $\ind(v')$ is bounded from below by
either~$1$ (in $\RR\times M$) or~$0$ (in $W^\infty$) due to genericity.
Now observe that whenever $\Gamma_2$ is nonempty, the doubly covered
orbit must connect~$v$ to an inverted connector, whose
constrained index is~$2$, so for $k=2$ we have
\begin{equation}
\label{eqn:Big!}
\ind(v) + \sum_C \widehat{\ind}(C) = k\ind(v') + 2(k-1) \ge 2,
\end{equation}
where the sum is over all inverted connectors that connect
to~$v$ along doubly covered breaking orbits.

\textsl{Step~4: Indices of bubbles.}
There may also be closed components in the main level of~$u_\infty$: these are 
$J'$-holomorphic spheres~$v$ which are either constant (\emph{ghost bubbles})
or are $k$-fold covers of somewhere injective spheres~$v'$ for some
$k \in \NN$.  In the latter case, \eqref{eqn:indexCover} also holds
with $\#\Gamma_2=0$, implying $\ind(v) \ge 0$, and the inequality is
strict whenever $k > 1$.  

If $v$ is a ghost bubble, then $\ind(v) = -2$,
but then the stability condition implies the existence of at least three
nodes connecting~$v$ to other components; let us refer to nodes of this
type as \emph{ghost nodes}.  There is then a graph with vertices representing
the ghost bubbles in~$u_\infty$ 
and edges representing the ghost nodes that connect
two ghost bubbles together, and since~$u_\infty$ 
has arithmetic genus zero, every
connected component of this graph is a tree.  Let~$G$ denote such a connected
component, with $V$ vertices and $E_i$ edges, which therefore satisfy
$V - E_i = 1$, and suppose there are also $E_e$ nodes connecting the ghost
bubbles represented by~$G$ to nonconstant components; we can think of these
as represented by ``external'' edges in~$G$.  By the stability condition,
we have
$$
2E_i + E_e \ge 3V,
$$
which after replacing $E_i$ by $V - 1$, becomes $E_e - 2\ge V$.  Then
the total contribution to $\ind(u_\infty)$ from all the ghost bubbles and
ghost nodes represented by~$G$ is
\begin{equation}
\label{eqn:ghostBubbles}
\begin{split}
-2 V + 2 (E_i + E_e) &= \left[-2V + (2E_i + E_e)\right] + 
E_e \ge V + (2 + V)\\
&= 2V + 2 \ge 4,
\end{split}
\end{equation}
unless $u_\infty$ has no ghost bubbles at all.

\textsl{Step~5: The total index of~$u_\infty$.}
We can now break down $\ind(u_\infty) \in \{0,2\}$ into a sum of nonnegative terms and
use this to rule out most possibilities.  Ghost bubbles are excluded
immediately due to \eqref{eqn:ghostBubbles}.
Similarly, there cannot be any multiply covered
bubbles, because these imply the existence of at least one node and thus
contribute at least four to $\ind(u_\infty)$.  The only remaining possibility
for multiple covers (aside from connectors) is a component with only
positive ends, whose index together with contributions from attached inverted
connectors is given by \eqref{eqn:Big!} and is thus already at least~$2$.  
In fact, if this component exists in an upper level, then the underlying
simple curve must have index at least~$1$, implying an even larger lower
bound in \eqref{eqn:Big!} and hence a contradiction.  The remaining
possibility, which occurs in the case $\ind(u_\infty)=2$, 
is therefore that the main level consists only of 
a connected double cover, and there are no nodes at all,
nor anything other than trivial cylinders and connectors in the upper
levels (Figure~\ref{fig:bubbly1}).  
The underlying simple curve in the main level has index~$0$
and has only simply covered asymptotic orbits, all in separate connected 
components of~$B_0 \cup \iI_0 \cup \p M_0$, thus it is subordinate to~$\pi_0$.

\begin{figure}
\begin{center}
\includegraphics{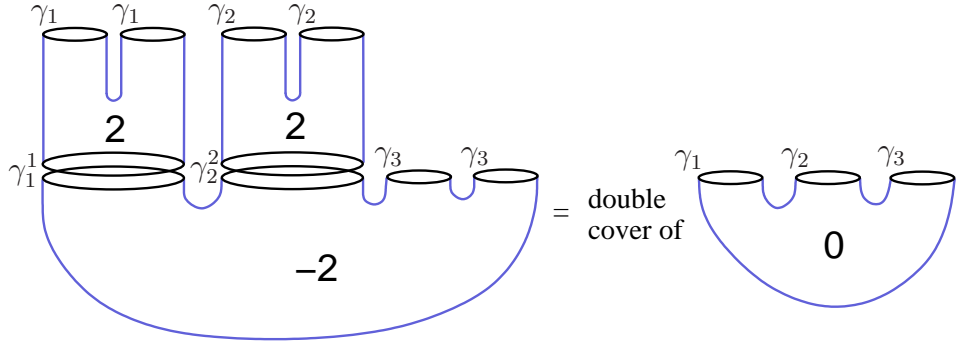}
\caption{\label{fig:bubbly1}
The limit building $u_\infty$ in a case where all asymptotic orbits have total
multiplicity two, so the main level may be a double cover of an index~$0$ curve,
while the upper level includes connectors and trivial cylinders (the latter not
shown in the picture).  Superscripts indicate the covering multiplicities of the
orbits $\gamma_1$, $\gamma_2$ and $\gamma_3$, and the numbers inside each component
indicate the constrained index.}
\end{center}
\end{figure}

Assume now that~$u_\infty$ contains no multiply covered components except possibly
for connectors.  If there is an upper level component~$v$ that is not a
cover of a trivial cylinder, then genericty implies $\ind(v) \ge 1$,
and in fact the index must also be even since all the asymptotic orbits
satisfy $\muCZ^\Phi(\gamma - \epsilon) = 1$.  Then $\ind(u_\infty) = \ind(v)=2$ 
and there are no nodes or inverted connectors; the latter implies that all 
positive asymptotic orbits of~$v$ must be simply covered.
Then there also cannot
be any doubly covered breaking orbits, leaving only the possibility that~$v$
is the only nontrivial component in~$u_\infty$.

Next assume there are only covers of trivial cylinders in the upper levels,
in which case the main level is necessarily nonempty.  Each component
in the main level has a nonnegative even index, so there can be
at most one node or one inverted connector in~$u_\infty$, and only if
$\ind(u_\infty)=2$.  If the main level contains a component~$v$ of index~$2$,
then there are no nodes or inverted connectors.  The latter precludes
doubly covered breaking orbits, thus there are no connectors at all,
and since~$v$ cannot have negative ends, we conclude that $u_\infty=v$
(Figure~\ref{fig:bubbly1a}).
Otherwise all main level components in $u_\infty$ have index~$0$ and
are subordinate to~$\pi_0$.  Examples of the possible configurations
are shown in Figures~\ref{fig:bubbly2}--\ref{fig:bubbly5}.

\begin{figure}
\begin{minipage}[t]{5in}
\begin{center}
\includegraphics{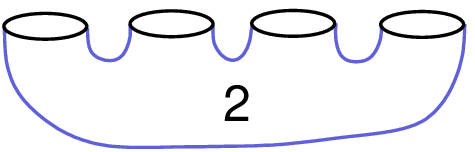}\par
\caption{\label{fig:bubbly1a} The limit building $u_\infty$ in the
simplest case, a smooth index~$2$ curve in the main level.}
\end{center}
\end{minipage}\par
\medskip
\begin{minipage}[t]{2.4in}
\begin{center}
\includegraphics{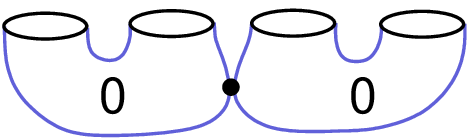}\par
\caption{\label{fig:bubbly2} A nodal curve with two index~$0$ components.}
\end{center}
\end{minipage}
\hfill
\begin{minipage}[t]{2.4in}
\begin{center}
\includegraphics{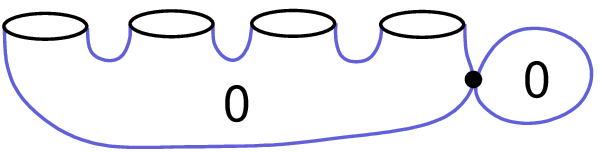}\par
\caption{\label{fig:bubbly3} Another nodal curve, including a bubble.}
\end{center}
\end{minipage}\par
\medskip
\begin{minipage}[t]{2.4in}
\begin{center}
\includegraphics{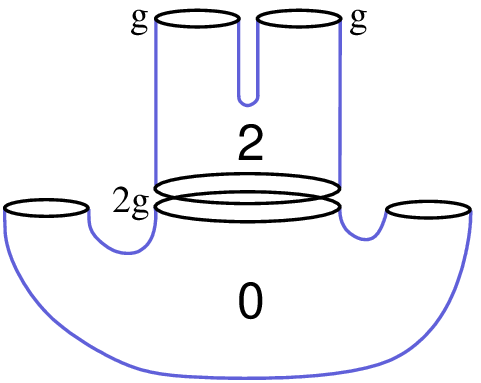}\par
\caption{\label{fig:bubbly4} An inverted connector can appear in an
upper level.}
\end{center}
\end{minipage}
\hfill
\begin{minipage}[t]{2.4in}
\begin{center}
\includegraphics{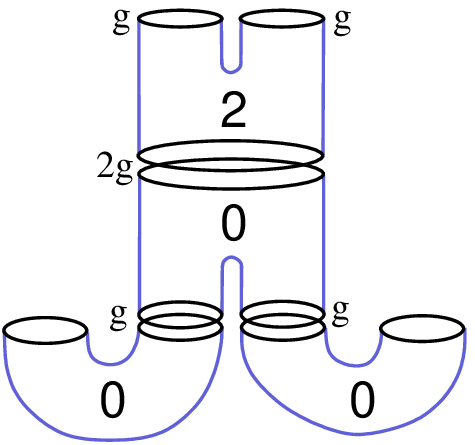}\par
\caption{\label{fig:bubbly5} Both types of connectors can appear.}
\end{center}
\end{minipage}
\end{figure}

\textsl{Step~6: Compactness for index~$0$ curves.}
If $\ind(u_\infty)=2$, then the somewhere injective index~$0$ curves
that can appear in the building~$u_\infty$ are all subordinate to~$\pi_0$
and come in two types:
\begin{itemize}
\item Type~1: Curves with only simply covered asymptotic orbits.
\item Type~2: Curves with exactly one doubly covered asymptotic orbit and
all others simply covered, and satisfying $v * v = 0$.
\end{itemize}
Indeed, the second type can occur as the unique main level curve in~$u_\infty$ 
if there is a single inverted
connector in an upper level, attached along the doubly covered orbit
(Figure~\ref{fig:bubbly4}).
To see that $v * v=0$ for such a curve, we use the continuity of the
intersection number under convergence to buildings, and the fact that
$u_k * u_k = 0$ since $u_k \in \mM_0^*(J')$; a computation shows that 
the contribution to $u_\infty * u_\infty$
from trivial cylinders and connectors in the upper level
plus breaking orbits adds up to~$0$.
The index counting argument of the previous steps shows already that the 
curves of Type~1 form a compact and hence finite set.
To finish the proof, we must show that the same is true for the
Type~2 curves.

Suppose $v_k$ is a sequence of Type~2 curves converging to a
holomorphic building~$v_\infty$.  Applying the index counting argument from
the previous steps, $v_\infty$ cannot contain any nodes or inverted
connectors; the worst case scenario is that the upper levels contain
only trivial cylinders and a single pair-of-pants connector, whose two
negative ends connect to two main level components~$v_-^1$ and $v_-^2$ 
that are both Type~1 curves (Figure~\ref{fig:bubbly6}).  
Since there are finitely many Type~1 curves, we may
assume by genericity of~$J'$ that no two of them approach a common orbit
in the Morse-Bott families~$\iI_0$, but this must be the case for
$v_-^1$ and~$v_-^2$ as they are both attached to a connector over an
orbit in~$\iI_0$, so we conclude that both are the same curve, which
we'll call~$v_-$.  We can rule out this scenario by computing the
self-intersection number $v_\infty * v_\infty$, which must a priori equal
$v_k * v_k = 0$.  Once more the connectors, trivial cylinders and breaking
orbits contribute zero in total, so since the main level includes two
copies of~$v_-$, we deduce
$$
0 = v_\infty * v_\infty = 4(v_- * v_-).
$$
But we can also compute $v_- * v_-$ directly from the adjunction formula
\eqref{eqn:adjunction}; indeed,
$$
v_- * v_- = 2\left[\delta(v_-) + \delta_\infty(v_-)\right] + c_N(v_-),
$$
where we've dropped the last term in \eqref{eqn:adjunction} since all
the asymptotic orbits are simple.  The constrained normal Chern number
$c_N(v_-)$ is defined in \eqref{eqn:cN} and can be deduced from the 
fact that $\ind(v_-) = 0$: since all of the relevant orbits
satisfy $\muCZ^\Phi(\gamma-\epsilon)=1$ and $\alpha^\Phi_-(\gamma+\epsilon)=0$,
we find $2c_1^\Phi(v_-) = \ind(v_-) + \chi(\dot{\Sigma}) - \sum_{z\in\Gamma}
\muCZ^\Phi(\gamma_z - \epsilon) = 2 - 2\#\Gamma$, hence
$$
c_N(v_-) = c_1^\Phi(v_-) - \chi(\dot{\Sigma}) +
\sum_{z \in \Gamma^+} \alpha^\Phi_-(\gamma_z + \epsilon) =
1 - \#\Gamma - (2 - \#\Gamma) = -1.
$$
This implies that $v_- * v_-$ is odd, and is thus a contradiction.
\begin{figure}
\begin{center}
\includegraphics{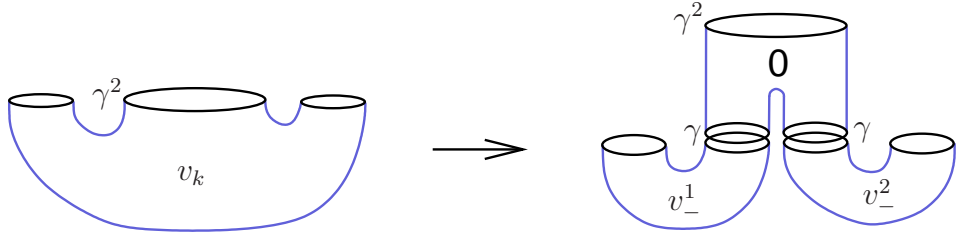}
\caption{\label{fig:bubbly6}
A possible limit of the sequence~$v_k$.}
\end{center}
\end{figure}
\end{proof}

\subsubsection{Some proofs by contradiction}

We are now in a position to prove the main results on symplectic fillings.

\begin{proof}[Proof of Theorem~\ref{thm:nonseparating} and Corollary~\ref{cor:semifillings}]
Given Propositions~\ref{prop:IFT} and~\ref{prop:compactness} above,
the result follows from the same argument as in \cite{AlbersBramhamWendl}.
For completeness let us briefly recall the main idea: if $(M,\xi)$ embeds as
a nonseparating contact type hypersurface into some closed symplectic
$4$-manifold $(W,\omega)$, then by cutting $W$ open along~$M$ and gluing
together an infinite chain of copies of the resulting symplectic cobordism
between $(M,\xi)$ and itself, we obtain a noncompact but geometrically 
bounded symplectic manifold  $(\wW,\omega)$ with contact type boundary 
$(M,\xi)$.  Attaching a cylindrical end and considering the moduli space
$\mM_0(J)$ that arises from a partially planar domain, one can use the
monotonicity lemma to prevent the curves in $\mM_0(J)$ from escaping beyond
a compact subset of $\wW$, thus the compactness result 
Prop.~\ref{prop:compactness} applies.  In combination with
Prop.~\ref{prop:IFT}, this implies that outside a subset of codimension~2
(the images of finitely many curves from Prop.~\ref{prop:compactness}),
the set of all points in $\wW$ filled by curves in $\mM_0(J)$ must be 
open and closed, and is therefore
everything; since those curves are confined to a compact subset, this
implies $\wW$ is compact and is thus a contradiction.

By a similar argument one can prove Corollary~\ref{cor:semifillings}
independently of Theorem~\ref{thm:nonseparating}, for if $(W,\omega)$ is
a strong filling with at least two boundary components $(M,\xi)$ and
$(M',\xi')$, then the curves in $\mM_0(J)$ emerging from the cylindrical
end at $M$ will foliate~$W^\infty$ except at a subset of codimension~2;
yet they cannot enter the cylindrical end at $M'$ due to convexity, and
this is again a contradiction.
\end{proof}

\begin{proof}[Proof of Theorem~\ref{thm:nonfillable}]
Assume $(W,\omega)$ is a strong filling of $(M,\xi)$ and the partially planar
domain $M_0 \subset M$ is a planar torsion domain.  It therefore has a planar
piece $M_0^P \subset M_0$, which is a proper subset of its interior.
Combining Props.~\ref{prop:IFT} and~\ref{prop:compactness}
as in the proof of Theorem~\ref{thm:nonseparating} above, the curves
in $\mM_0(J)$ that emerge from $M_0^P$ in the cylindrical end of 
$W^\infty$ form a
foliation of~$W^\infty$ outside a subset of codimension~2.  We can
therefore pick a point $p \in M \setminus M_0^P$ and find a sequence 
of curves $u_k \in \mM_0(J)$ for $k \to \infty$ whose images contain
$(k,p) \in [T,\infty) \times M \subset W^\infty$.  Applying
Prop.~\ref{prop:compactness} again, these have a subsequence which
converges to a
$J_+$-holomorphic curve $u'$ in $\RR\times M$, whose asymptotic orbits
are in the same Morse-Bott families as the curves in $\mM_0(J)$.  
The uniqueness statement in
Theorem~\ref{thm:openbook} then implies that $u'$ is a lift of a page
in the summed open book on~$M_0$, which proves that $M_0 = M$, and
$M_0 \setminus M_0^P$ consists of a single family of pages diffeomorphic
to the planar pages in $M_0^P$ and approaching the same Reeb orbits at
their boundaries.  In other words, $M_0$ is a symmetric summed open
book, which contradicts the definition of a planar torsion domain.
\end{proof}

\begin{proof}[Proof of Theorem~\ref{thm:complement}]
The idea is much the same as in the proof of Theorem~\ref{thm:nonfillable},
but instead of working in the compact context of a symplectic filling,
we work in a noncompact symplectic cobordism diffeomorphic to $\RR\times M$,
in which the negative end is ``walled off'' so that curves in
$\mM_0(J)$ cannot reach it.  This wall is created by a family of
holomorphic curves, namely a subset of the generally non-generic family
arising from an open book decomposition 
(see Figure~\ref{fig:torsionCobordism}).

\begin{figure}
\begin{center}
\includegraphics{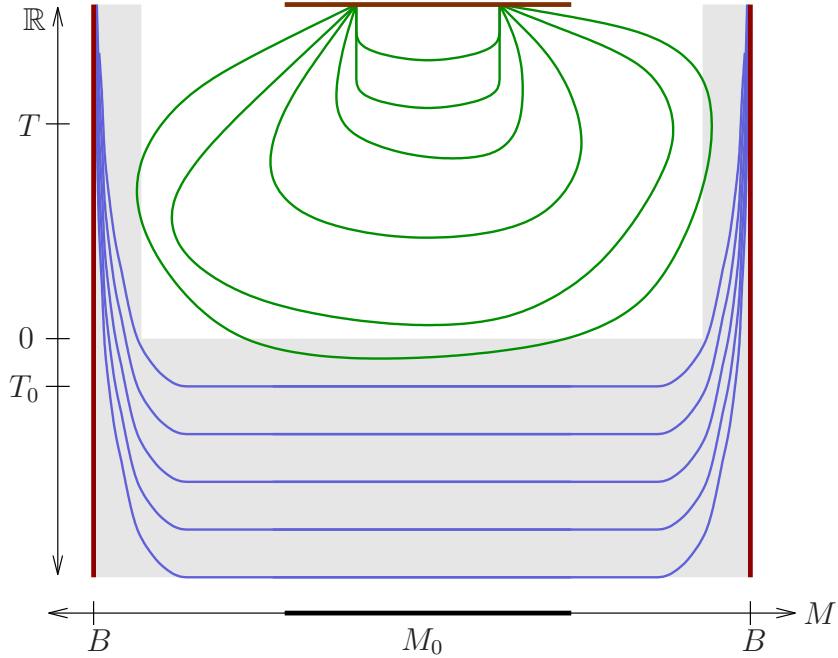}
\caption{\label{fig:torsionCobordism}
The symplectic cobordism used in the proof of Theorem~\ref{thm:complement},
with the negative end ``walled off'' by holomorphic pages of an open book.
The almost complex structure in the shaded region is a non-generic
one for which holomorphic open books always exist.}
\end{center}
\end{figure}

Specifically, suppose $\pi : M\setminus B \to S^1$ is an open book 
decomposition.  Recall from Prop.~\ref{prop:cobordism} that there is a
symplectic cobordism $(W,\Omega) = ([0,1] \times M,\Omega)$ where
$\Omega$ has the form $\omega + d(t\lambda_0)$ near $\{0\}\times M$,
$d(e^t\lambda)$ near $\{1\} \times M$ and $d(\varphi(t)\lambda_0)$ in
a neighborhood of $[0,1] \times B$ for some positive increasing
function~$\varphi$.  Here $\hH_\epsilon = (\lambda_\epsilon,
\omega)$ is a family of stable Hamiltonian structures adapted to the
open book, so $\xi_\epsilon = \ker\lambda_\epsilon$ for some small $\epsilon > 0$
is a supported contact structure and $\lambda$ is a contact form for 
$\xi_\epsilon$.

Arguing by contradiction, assume $(M,\xi_\epsilon)$ contains a planar torsion
domain $M_0$ that is disjoint from~$B$.  We can then find a neighborhood
$\uU \subset M$ of~$B$ such that $M_0 \subset M \setminus \uU$ and
$\Omega = d(\varphi(t)\lambda_0)$ on $[0,1] \times \uU$.  Extend $W$ to a
noncompact symplectic manifold as follows: first attach to $\{1\} \times M$
a positive cylindrical end that contains a half-symplectization of the form
$$
([T \times \infty) \times M, d(e^t\alpha)).
$$
Note that since $\{1\} \times M$ is a convex boundary component of
$(W,\Omega)$, we are free here to choose $\alpha$ as any contact 
form with $\ker\alpha = \xi_\epsilon$: in particular on~$M_0$ we can assume
it is the special Morse-Bott
contact form provided by Theorem~\ref{thm:openbook}, and since
$M_0 \cap \uU = \emptyset$, we can also assume $\alpha = \lambda_0$
in $\uU$ and
$\Omega = d(e^t\lambda_0)$ on $[1,\infty) \times \uU$.  Secondly,
attach to $\{0\} \times M$ a negative cylindrical end of the form
$$
((-\infty,0] \times M,\omega + d(\psi(t)\lambda_0)),
$$
where $\psi : (-\infty,0] \to \RR$ is an increasing function with
sufficiently small magnitude to make the form symplectic.  Denote the
resulting noncompact symplectic manifold by $(W^\infty,\omega)$.

Recall the special almost complex structure $J_0 \in \jJ(\hH_0)$ constructed
in \S\ref{subsec:bigTheorem}, for which all the pages of~$\pi$ admit
$J_0$-holomorphic lifts in $\RR\times M$.
We now can choose an almost complex structure~$J$ on $(W^\infty,\omega)$
that has the following properties:
\begin{enumerate}
\item $J$ is everywhere compatible with $\omega$
\item $J = J_0$ on both $\RR \times \uU$ and $(-\infty,0] \times M$
\item On $[T,\infty) \times M$, $J$ is the special almost complex 
structure compatible with~$\alpha$ provided by Theorem~\ref{thm:openbook}.
\end{enumerate}
Now the moduli space $\mM_0(J)$ of $J$-holomorphic curves emerging from
$M_0$ in the positive end can be defined as in the previous proof.
The important new feature is that we also have $J$-holomorphic curves
in $W^\infty$ coming from the $J_0$-holomorphic lifts of pages of the
open book: in fact for some $T_0 \in \RR$ sufficiently close to~$-\infty$,
every point in $(-\infty,T_0] \times M$ is contained in such a curve
(see Figure~\ref{fig:torsionCobordism}).
The leaves of the foliation in $[T,\infty) \times M_0$ obviously do not
intersect these curves, so positivity of intersections implies that
no curve in $\mM_0(J)$ may intersect them.  It follows that the curves
in $\mM_0(J)$ can never enter $(-\infty,T_0] \times M$,
so the compactness result Prop.~\ref{prop:compactness}
applies, and we conclude as before that $\mM_0(J)$ fills an open and closed
subset of~$W^\infty$ outside a subset of comdimension~2.  
But this forces some curve in $\mM_0(J)$ to enter
the negative end eventually, and we have a contradiction.
\end{proof}

\begin{remark}
For an arguably easier proof of Theorem~\ref{thm:complement}, one can
present it as a corollary of Theorem~\ref{thm:nonfillable} by showing that
whenever $(M,\xi)$ is supported by an open book $\pi : M \setminus B \to S^1$
and $\uU \subset M$ is a neighborhood of the binding, $(M\setminus \uU,\xi)$
can be embedded into a strongly fillable contact manifold.  This can be
constructed by a doubling trick using the binding sum: if $(M',\xi')$ is
supported by an open book that has the same page~$P$ as $\pi$ but inverse
monodromy, then one can construct a larger contact manifold by 
summing every binding component in~$M$ to a binding component in~$M'$.
The result is a symmetric summed open book which
has a strong symplectic filling homeomorphic to
$[0,1] \times S^1 \times P$, in which the natural projection to
$[0,1] \times S^1$ forms a symplectic fibration.  The details of this
construction are carried out in \cite{Wendl:fiberSums}.
\end{remark}

\subsection{Embedded Contact Homology}
\label{sec:ECH}

\subsubsection{Review of twisted and untwisted ECH}
\label{subsec:twisted}

Before proving Theorems~\ref{thm:ECH} and~\ref{thm:twisted}, we review the
essential definitions of Embedded Contact Homology, 
mainly following the discussions
in \cite{HutchingsSullivan:T3}*{\S 11} and \cite{Taubes:ECH=SWF5}.
Assume $(M,\xi)$ is a closed contact $3$-manifold with nondegenerate contact 
form $\lambda$, and $J$ is a generic almost complex structure on $\RR\times M$
compatible with~$\lambda$.  We will refer to Reeb orbits as \emph{even} or
\emph{odd} depending on the parity of their Conley-Zehnder indices:
in dynamical terms, an even orbit is always hyperbolic, while an odd orbit
can be either elliptic or hyperbolic, the latter if and only if its
double cover is even.  In \S\ref{subsec:definitions} we defined the notion
of an \emph{orbit set} $\boldsymbol{\gamma} = \{ (\gamma_1,m_1),\ldots,
(\gamma_N,m_N) \}$, and we say that $\boldsymbol{\gamma}$ is \emph{admissible}
if $m_i = 1$ whenever $\gamma_i$ is hyperbolic.
Given $h \in H_1(M)$, choose a 
\emph{reference cycle}, i.e.~a $1$-cycle $\boldsymbol{\rho}_h$ in $M$ 
with $[\boldsymbol{\rho}_h] = h$; without loss of generality we can assume
$\boldsymbol{\rho}_h$ is represented by an embedded oriented knot in~$M$
that is not contained in any closed
Reeb orbit.  Then adapting the definition of $H_2(M,\boldsymbol{\gamma}^+
- \boldsymbol{\gamma}^-)$ from \S\ref{subsec:definitions}, it makes sense
to speak of relative homology classes in
$H_2(M , \boldsymbol{\rho}_h - \boldsymbol{\gamma})$ for any orbit
set~$\boldsymbol{\gamma}$ with $[\boldsymbol{\gamma}] = h$.

For any subgroup $G \subset H_2(M)$, define
$$
\widetilde{C}_*(M , \lambda ; h, G)
$$
to be the free $\ZZ$-module generated by symbols of the
form $e^A \boldsymbol{\gamma}$, where $\boldsymbol{\gamma}$ is an
admissible orbit set with $[\boldsymbol{\gamma}] = h$ and
$A \in H_2(M,\boldsymbol{\rho}_h - \boldsymbol{\gamma}) / G$, meaning
$A \sim A'$ whenever $A - A' \in G$.  A differential
$\p : \widetilde{C}_*(M, \lambda ; h ,G) \to \widetilde{C}_*(M,\lambda ; h,G)$
is defined by
$$
\p \left( e^A \boldsymbol{\gamma}\right) = \sum_{\boldsymbol{\gamma}',A'} 
\# \left(\frac{\mM_{\text{adm}}(\boldsymbol{\gamma},\boldsymbol{\gamma}',A')}{\RR} \right) 
e^{A + A'} \boldsymbol{\gamma}',
$$
where the sum ranges over all admissible orbit sets $\boldsymbol{\gamma'}$
and $A' \in H_2(M , \boldsymbol{\gamma} - \boldsymbol{\gamma}') / G$,
and $\mM_\text{adm}(\boldsymbol{\gamma},\boldsymbol{\gamma}',A') \subset
\mM(J)$ is the
oriented $1$-manifold of (possibly disconnected) finite energy $J$-holomorphic
curves $u : \dot{\Sigma} \to \RR \times M$ satisfying the
following conditions:
\begin{enumerate}
\item $\ind(u) = 1$,
\item $[u] \sim A'$ in $H_2(M,\boldsymbol{\gamma} - \boldsymbol{\gamma}') / G$,
\item $u$ is \emph{admissible} in the sense defined in
\cite{Hutchings:index}.
\end{enumerate}
The definition of an admissible $J$-holomorphic curve given in
\cite{Hutchings:index} is rather complicated, but for our purposes we will
only need the following special case, which is immediate:
\begin{lemma}[see~\cite{Hutchings:index}*{\S 4}]
\label{lemma:admissible}
Suppose $u : \dot{\Sigma} \to \RR \times M$ is a connected $J$-holomorphic
curve of index~$1$ whose asymptotic orbits are all simply covered and
distinct.  Then $u$ is admissible if and only if it is embedded.
\end{lemma}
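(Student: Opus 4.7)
The plan is to unpack Hutchings' definition of admissibility in the special case where all asymptotic orbits of $u$ are simply covered and pairwise distinct, and then reduce the claim to the ECH index inequality combined with the punctured adjunction formula \eqref{eqn:adjunction}. Admissibility in \cite{Hutchings:index}*{\S 4} is formulated for possibly disconnected curves, and roughly requires that every connected component be either (a) an embedded somewhere injective curve whose ECH index equals its Fredholm index, or (b) an unbranched multiple cover of a trivial cylinder, with some global combinatorial constraints on how these pieces are matched up. Since our $u$ is connected and has only simply covered asymptotic orbits, no trivial cylinder cover can appear: a cover of a trivial cylinder asymptotic to a simple orbit would force a matching asymptotic end on another component, which doesn't exist. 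So admissibility of $u$ reduces to the condition that $u$ is somewhere injective and embedded, with ECH index agreeing with its Fredholm index. Note furthermore that a connected curve all of whose asymptotic orbits are simple and distinct cannot be a nontrivial multiple cover (a $k$-fold cover would give each underlying orbit multiplicity divisible by $k$, so $k=1$); hence $u$ is automatically somewhere injective.

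For the ``if'' direction, assume $u$ is embedded. Then $\delta(u) = 0$ and, because every asymptotic orbit is simply covered and the ends approach distinct orbits, the punctured adjunction formula \eqref{eqn:adjunction} together with the asymptotic analysis of \cite{Siefring:intersection} gives $\delta_\infty(u) = 0$ as well (there are no pairs of ends that can share hidden asymptotic intersections). With $\delta(u) + \delta_\infty(u) = 0$, Hutchings' relative adjunction formula reduces the ECH index of $u$ to $I(u) = \text{ind}(u) = 1$, and the remaining combinatorial conditions in \cite{Hutchings:index} are vacuous because there are no multiply covered orbits in sight. So $u$ is admissible.

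For the ``only if'' direction, assume $u$ is admissible. By the observations above, this means $u$ is somewhere injective with $I(u) = \text{ind}(u)$. Hutchings' ECH index inequality for somewhere injective curves states
\[
I(u) \;\ge\; \text{ind}(u) + 2\bigl(\delta(u) + \delta_\infty(u)\bigr),
\]
with equality forcing $\delta(u) = \delta_\infty(u) = 0$. Since the right-hand side equals $\text{ind}(u) = 1$ under our hypothesis, we conclude $\delta(u) = 0$, which together with somewhere injectivity means $u$ is embedded.

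The only step that requires genuine care, rather than simple bookkeeping, is verifying that the hypothesis of simply covered and distinct asymptotic orbits really does eliminate all of the auxiliary combinatorial data in Hutchings' admissibility definition, in particular the ``partition conditions'' at multiply covered elliptic orbits. Once this reduction is in hand, everything else is a direct consequence of the ECH index inequality and the adjunction formula already recalled in~\S\ref{subsec:definitions}.
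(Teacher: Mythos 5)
Your proposal is correct and follows the same route the paper intends: the paper offers no argument beyond calling the special case ``immediate'' from Hutchings' definition, and your write-up is a sound unpacking of why — connectedness plus simple, distinct asymptotic orbits rule out covers of trivial cylinders and nontrivial multiple covers and trivialize the partition conditions, so admissibility collapses to embeddedness. The only stylistic remark is that the ``only if'' direction is already definitional once the trivial-cylinder and partition data are eliminated (the non-cylindrical part of an admissible curve is required to be embedded), so the detour through the ECH index inequality, while valid, is not needed.
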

The orientation of $\mM_\text{adm}(\boldsymbol{\gamma},\boldsymbol{\gamma}',A')$
is chosen in accordance with \cite{BourgeoisMohnke}, which requires first
choosing an ordering for all the even orbits in~$M$, then ordering the 
punctures of any $u \in \mM_\text{adm}(\boldsymbol{\gamma},\boldsymbol{\gamma}',A')$
accordingly.  The signed count above is then
finite due to the index inequality and compactness theorem in
\cite{Hutchings:index}.\footnote{The results in \cite{Hutchings:index} are
stated only for a very special class of stable Hamiltonian structures
arising from mapping tori, but they extend to the contact case due to 
the relative asymptotic formulas of Siefring \cite{Siefring:asymptotics}.}  
These same results together with the gluing construction of
\cites{HutchingsTaubes:gluing1,HutchingsTaubes:gluing2} imply that
$\p^2 = 0$, and the resulting homology is denoted by
$\widetilde{\ECH}_*(M,\lambda,J ; h,G)$.  We have two natural choices for
the subgroup~$G$: if $G = H_2(M)$, then the terms $e^A$ are all trivial and
we obtain the usual \emph{untwisted} Embedded Contact Homology,
$$
\ECH_*(M,\lambda,J ; h) := \widetilde{\ECH}_*(M,\lambda,J ; h,H_2(M)).
$$
At the other end of the spectrum, taking $G$ to be the trivial subgroup
leads to the \emph{fully twisted} variant of ECH,
$$
\widetilde{\ECH}_*(M,\lambda,J ; h) :=
\widetilde{\ECH}_*(M,\lambda,J ; h,\{0\}).
$$
Since every nontrivial finite energy $J$-holomorphic curve in $\RR\times M$ 
has at least one positive puncture, the empty orbit set 
$\boldsymbol{\emptyset}$  always satisfies $\p\boldsymbol{\emptyset} = 0$,
and thus represents a homology class which we call the (untwisted)
\emph{contact class},
$$
c(M,\lambda,J) = [\boldsymbol{\emptyset}] \in \ECH_*(M,\lambda,J ; 0).
$$
To define the twisted contact class, we note that for $h=0$ there is a
canonical choice of reference cycle $\boldsymbol{\rho}_0$, namely the
empty set, so 
$H_2(M, \boldsymbol{\rho}_0 - \boldsymbol{\emptyset}) = H_2(M)$ and it is
natural to define
$$
\tilde{c}(M,\lambda,J) = [e^0 \boldsymbol{\emptyset}] \in
\widetilde{\ECH}_*(M,\lambda,J ; 0)
$$

\subsubsection{Proof of the vanishing theorems}
\label{subsubsec:vanishing}

We now prove Theorems~\ref{thm:ECH} and~\ref{thm:twisted}.
Assume $(M,\xi)$ contains a planar $k$-torsion domain $M_0$ with planar piece
$M_0^P \subset M_0$.  Note that for some planar torsion domains, there may
be multiple subsets of $M_0$ that could sensibly be called the planar piece
(e.g.~$M_0$ could contain multiple planar open books summed together as
in Figure~\ref{fig:torsionAmbiguity}), 
so whenever such
an ambiguity exists, we choose $M_0^P$ to make~$k$ as small as possible.
Let $\lambda$ and~$J$ denote the special Morse-Bott
contact form and compatible Fredholm regular almost complex structure
provided by Theorem~\ref{thm:openbook}.  Then $\p M_0^P$ is a nonempty 
union of tori
$$
\p M_0^P = T_1 \cup \ldots \cup T_n
$$
which are Morse-Bott families of Reeb orbits, and the interior
of~$M_0^P$ may also contain interface tori, which we denote by
$$
\iI_0 = T_{n+1} \cup \ldots \cup T_{n+r},
$$
and binding circles
$$
B_0 = \beta_1 \cup\ldots \cup \beta_m.
$$
The planar pages in $M_0^P$
have embedded $J$-holomorphic lifts to $\RR \times M$, forming a
family of curves,
$$
u_{\sigma,\tau} \in \mM(J),
\qquad (\sigma,\tau) \in \RR \times S^1,
$$
which have no negative punctures and $m+n+2r$ positive punctures, each
asymptotic to simply covered orbits in $B_0 \cup \iI_0 \cup \p M_0^P$,
exactly one in each connected component of $B_0 \cup \p M_0^P$ and two
in each component of~$\iI_0$.
Moreover, other than these curves and the obvious trivial cylinders, 
there is no other 
connected finite energy $J$-holomorphic curve in $\RR\times M$ with
its positive ends approaching any subcollection of the asymptotic orbits 
of~$u_{\sigma,\tau}$.

\begin{figure}
\begin{center}
\includegraphics{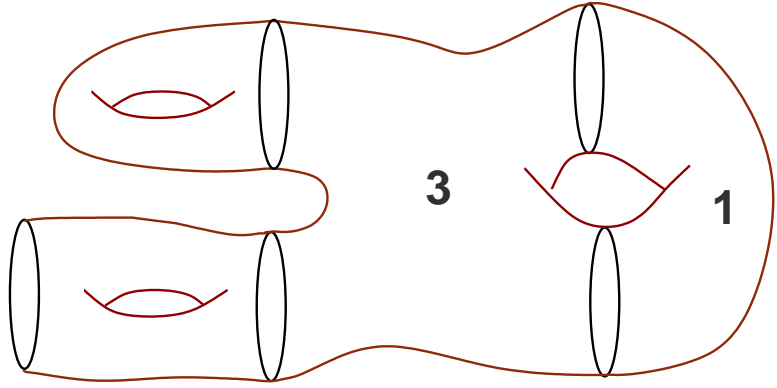}
\caption{\label{fig:torsionAmbiguity}
A planar torsion domain for which the order is not uniquely defined:
depending on the choice of planar piece, the order could be either~$1$
or~$3$.}
\end{center}
\end{figure}

We now perturb $\lambda$ to a nondegenerate contact form $\lambda'$ by the
scheme described in \cite{Bourgeois:thesis}, so that each of the original
Morse-Bott tori $T_j \subset \iI_0 \cup \p M_0^P$ 
admits exactly two nondegenerate
Reeb orbits, one elliptic and one hyperbolic,
$$
\gamma_j^e \cup \gamma_j^h \subset T_j,
$$
whose Conley-Zehnder indices with respect to the natural trivializations in the
$(\theta,\rho,\phi)$-coordinates are~$1$ and~$0$ respectively.  Perturbing
$J$ to a generic $J'$ compatible with $\lambda'$, the family of curves
$u_{\sigma,\tau}$ gives rise to embedded
$J'$-holomorphic curves (Figure~\ref{fig:MorseBott})
asymptotic to various combinations of these orbits
and the components of~$B_0$.  If $u : \dot{\Sigma} \to \RR \times M$
is such a curve, then genericity implies $\ind(u) \ge 1$, so we deduce from
the index formula that such curves come in two types:
\begin{itemize}
\item $\ind(u) = 2$ if all ends approaching $\iI_0 \cup \p M_0^P$ 
approach elliptic orbits,
\item $\ind(u) = 1$ if $u$ has exactly one end approaching a hyperbolic
orbit in~$\iI_0 \cup \p M_0^P$.
\end{itemize}
In fact, up to $\RR$-translation there is exactly one $J'$-holomorphic curve
$u_0 : \dot{\Sigma} \to \RR\times M$ with all punctures positive and asymptotic
to the orbits
$$
\gamma_1^h,\gamma_2^e,\ldots,\gamma_n^e,\gamma_{n+1}^e,\gamma_{n+1}^e,\ldots,
\gamma_{n+r}^e,\gamma_{n+r}^e,\beta_1,\ldots,\beta_m.
$$
Let us therefore define the orbit set
$$
\boldsymbol{\gamma}_0 = \{ (\gamma_1^h,1),(\gamma_2^e,1),\ldots,
(\gamma_{n}^e,1),(\gamma_{n+1}^e,2),\ldots,(\gamma_{n+r}^e,2),
(\beta_1,1),\ldots,(\beta_m,1) \},
$$
for which $[\boldsymbol{\gamma}_0] = 0$, and define also
the relative homology class
$$
A_0 = - [u_0] \in H_2(M , \boldsymbol{\rho}_0 - \boldsymbol{\gamma}_0).
$$
The perturbation from $J$ to $J'$ creates some additional $J'$-holomorphic 
cylinders which arise from gradient flow lines along the Morse-Bott
families of orbits, as described in \cite{Bourgeois:thesis}.
Namely for each $j=1,\ldots,n+r$, there are two
embedded index~$1$ cylinders
$$
v_j^+ , v_j^- : \RR \times S^1 \to \RR \times M,
$$
each with positive end at $\gamma_j^e$ and negative end at $\gamma_j^h$;
the images of these cylinders in~$M$ are the two connected components of
$T_j \setminus (\gamma_j^e \cup \gamma_j^h)$,
thus after choosing the labels appropriately, we can assume their
relative homology classes are related by
$$
[v_j^+] - [v_j^-] = [T_j] \in H_2(M).
$$
\begin{figure}
\begin{center}
\includegraphics{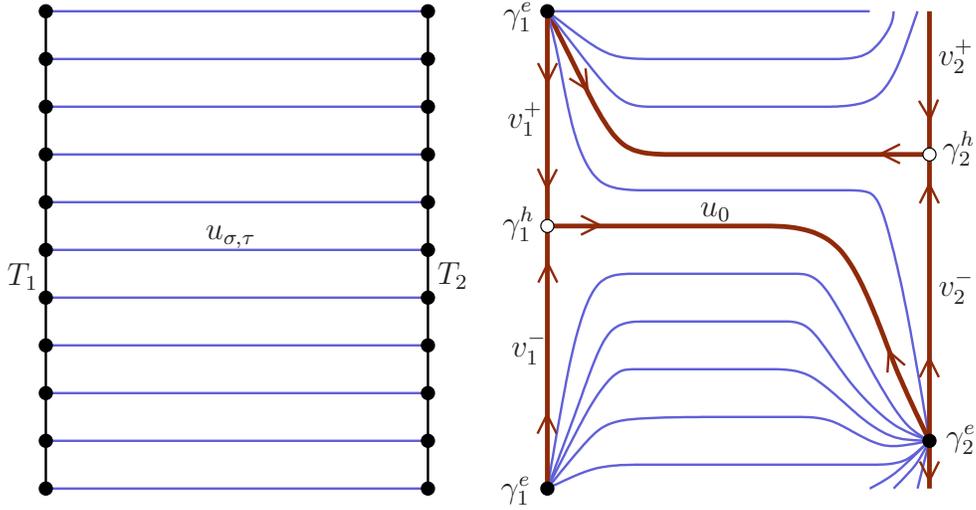}
\caption{\label{fig:MorseBott}
The perturbation from Morse-Bott (left) to nondegenerate (right), shown
here in the simple case where $u_{\sigma,\tau}$ is a family of cylinders
asymptotic to two Morse-Bott tori.  All the orbits in the picture point
along an $S^1$-factor through the page, and the top and bottom are identified.
Arrows indicate the signs of the ends of the rigid curves in the nondegenerate
picture: an end is positive if and only if the arrow points \emph{away
from} the orbit.}
\end{center}
\end{figure}

Now in the twisted ECH complex, the only curves other than $u_0$
counted by $\p\left( e^{A_0} \boldsymbol{\gamma}\right)$ 
are the disjoint unions of
$v_j^\pm$ with collections of trivial cylinders for $j=2,\ldots,n+r$.
The negative ends of such a disjoint union give rise to the orbit set
\begin{equation*}
\begin{split}
\boldsymbol{\gamma}_j := \{ & (\gamma_1^h,1),(\gamma_2^e,1),\ldots,
(\gamma_{j-1}^e,1),(\gamma_j^h,1),(\gamma_{j+1}^e,1),\ldots,(\gamma_{n}^e,1),\\
& (\gamma_{n+1}^e,2),\ldots,(\gamma_{n+r}^e,2) , (\beta_1,1),\ldots,(\beta_m,1) \}
\end{split}
\end{equation*}
for $j=1,\ldots,n$, and a similar expression for $j=n+1,\ldots,n+r$
which will appear twice due to the multiplicity attached to~$\gamma_j^e$.
Choosing appropriate coherent orientations and adding all this together,
we find
\begin{equation*}
\begin{split}
\p\left( e^{A_0} \boldsymbol{\gamma}_0\right) &= 
e^0 \boldsymbol{\emptyset} + \sum_{j=2}^n e^{A_0 + [v_j^-]}
\left( e^{[T_j]} - 1 \right) \boldsymbol{\gamma}_j \\
&\qquad + \sum_{j=n+1}^{n+r} 2 e^{A_0 + [v_j^-]}
\left( e^{[T_j]} - 1 \right) \boldsymbol{\gamma}_j.
\end{split}
\end{equation*}
We thus have $\p\left( e^{A_0} \boldsymbol{\gamma}_0\right) =
e^0\boldsymbol{\emptyset}$ whenever $[T_j] = 0 \in H_2(M)$ for all
$j=2,\ldots,n+r$, which proves Theorem~\ref{thm:twisted}.  For untwisted
coefficients, we divide the entire calculation by $H_2(M)$ so that
$e^{[T_j]} - 1 = 0$ always, thus $\p\boldsymbol{\gamma}_0 = 
\boldsymbol{\emptyset}$ holds with no need for any topological condition.
With that, the proof of Theorem~\ref{thm:ECH} is complete.

\subsection*{Acknowledgments}
I am grateful to Dietmar Salamon, Peter Albers, Barney Bramham,
Janko Latschev, Klaus Niederkr\"uger
and Paolo Ghiggini for helpful conversations, and especially 
to Michael Hutchings
for explaining to me the compactness theorem for holomorphic currents,
and Patrick Massot for several useful comments on an earlier version
of the paper.
The idea of blowing up binding orbits of open books was inspired by a
talk I heard Hutchings give at Stanford's Holomorphic Curves FRG Workshop
in August 2008; I'd like to thank the organizers of that
workshop for the invitation.

\begin{bibdiv}
\begin{biblist}
\bibselect{wendlc}
\end{biblist}
\end{bibdiv}

\end{document}